
\documentclass[10pt,oneside]{amsart}
\usepackage{amscd,amssymb, bbm,color}
\usepackage{graphicx, amsmath, amssymb, amscd, amsthm, euscript, psfrag, amsfonts,bm}

\usepackage{amscd,amssymb}

\usepackage[all,arc]{xy}
\usepackage{enumerate}
\usepackage{mathrsfs}
\usepackage{amsmath}
\usepackage{graphicx}
\usepackage{caption}
\usepackage{subcaption}

\newtheorem{thm}{Theorem}[section]
\newtheorem{Thm}[thm]{Theorem}

\newtheorem{cor}[thm]{Corollary}
\newtheorem{prop}[thm]{Proposition}
\newtheorem{lem}[thm]{Lemma}

\theoremstyle{definition}
\newtheorem{defn}[thm]{Definition}\newtheorem{Def}[thm]{Definition}\newtheorem{dfn}[thm]{Definition}

\theoremstyle{remark}
\newtheorem{Rmk}[thm]{Remark}
\newtheorem{remark}[thm]{Remark}
\newtheorem{rmk}[thm]{Remark}

\newcommand{\PSL}{\operatorname{PSL}_2(\mathbb{R})}

\newcommand{\be}{begin{equation}}

\newcommand{\bH}{\mathbb H}

\newcommand{\e}{{\epsilon}}
\newcommand{\z}{\mathbb{Z}}

\newcommand{\N}{\mathbb{N}}
\renewcommand{\c}{\mathbb{C}}
\newcommand{\br}{\mathbb{R}}

\newcommand{{\grinv}}{{\Cal G}^{-r}}

\newcommand{\ba}{\backslash}

\newcommand{\G}{\Gamma}

\newcommand{\Haar}{\operatorname{Haar}}

\newcommand{\Cal}{\mathcal}
\renewcommand{\P}{\mathcal P}

\newcommand{\la}{\langle}
\newcommand{\ra}{\rangle}

\newcommand{\bp}{\begin{pmatrix}}
\newcommand{\ep}{\end{pmatrix}}

\renewcommand{\bp}{{\rm bp}}

\newcommand{\SO}{\operatorname{SO}}

\newcommand{\T}{\operatorname{T}}

\newcommand{\bT}{\mathbf T}

\newcommand{\Q}{\mathsf{Q}}



\newcommand{\PS}{\rm{PS}}

\newcommand{\op}{\operatorname}

\newcommand{\BR}{\operatorname{BR}}

\renewcommand{\setminus}{-}

\renewcommand{\be}{\begin{equation}}
\newcommand{\ee}{\end{equation}}

\newcommand{\Z}{\z}

\newcommand{\Pp}{\hat \sigma}

\newcommand{\BMS}{\op{BMS}}
\newcommand{\cA}{\mathsf A}
\renewcommand{\T}{\op{T}}

\newcommand{\eg}{\ell(\gamma)}
\renewcommand{\P}{\mathsf P}
\newcommand{\PP}{\mathcal P}

\makeatletter
\let\c@equation\c@thm
\makeatother
\numberwithin{equation}{section}
\setcounter{tocdepth}{1}
\bibliographystyle{plain}

\title[Local Mixing on abelian covers] {Local Mixing and invariant measures
for horospherical subgroups on abelian covers}

\author{Hee Oh}
\address{Mathematics department, Yale university, New Haven, CT 06511 and Korea Institute for Advanced Study, Seoul, Korea}
\email{hee.oh@yale.edu}

\thanks{Oh was supported in part by NSF Grant \#1361673.}

\author{Wenyu Pan}
\address{Mathematics department, Yale university, New Haven, CT 06511}
\email{wenyu.pan@yale.edu}

\begin{document}

\maketitle

\begin{abstract} Abelian covers of hyperbolic $3$-manifolds are ubiquitous.
We prove the local mixing theorem of the frame flow for abelian covers of closed hyperbolic $3$-manifolds.
We obtain a  classification theorem for measures invariant under the horospherical subgroup. We also describe
applications to the prime geodesic theorem as well as to other counting and equidistribution problems.
Our results are proved for any abelian cover of a homogeneous space $\Gamma_0\ba G$ 
where $G$ is a rank one simple Lie group and $\Gamma_0<G$ is a convex cocompact Zariski dense subgroup.
 \end{abstract}

\section{Introduction}
\subsection{Motivation} Let $\mathcal M$ be a closed hyperbolic $3$-manifold. We can present $\mathcal M$ as the quotient
$\Gamma_0 \ba \bH^3$ of the hyperbolic $3$-space $\bH^3$ for some co-compact lattice $\Gamma_0$ of
$G=\op{PSL}_2(\c)$.
The frame bundle $\mathcal F(\mathcal M)$ is isomorphic to the homogeneous space $\Gamma_0\ba G$
and the frame flow on  $\mathcal F(\mathcal M)$ corresponds to the right multiplication
of $a_t=\begin{pmatrix} e^{t/2} & 0\\ 0 &e^{-t/2}\end{pmatrix}$ on $\Gamma_0\ba G$.

The strong mixing property of the frame flow \cite{HM} is well-known:
\begin{thm}[Strong mixing] For any $\psi_1, \psi_2\in L^2(\Gamma_0\ba G)$,
$$\lim_{t\to \infty} \int \psi_1(xa_t)\psi_2(x) \, dx =\int \psi_1 dx \cdot \int \psi_2\, dx$$
where $dx$ denotes the $G$-invariant probability measure on $\Gamma_0\ba G$.
\end{thm}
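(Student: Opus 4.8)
The plan is to deduce this from the vanishing of matrix coefficients of unitary representations, i.e.\ the Howe--Moore theorem, as signalled by the citation \cite{HM}. First I would reduce to the mean-zero case. Decompose orthogonally
\[
L^2(\Gamma_0\ba G)=\c\cdot\mathbf 1\ \oplus\ L^2_0,\qquad L^2_0:=\Big\{\psi:\ \int\psi\,dx=0\Big\},
\]
and write $\psi_i=\big(\int\psi_i\,dx\big)\mathbf 1+\psi_i^0$ with $\psi_i^0\in L^2_0$. Since $dx$ is $a_t$-invariant, the constant parts contribute exactly $\int\psi_1\,dx\cdot\int\psi_2\,dx$, while each mixed term vanishes because $\int\psi_2^0\,dx=0$ and $\int\psi_1^0(xa_t)\,dx=\int\psi_1^0\,dx=0$. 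Hence the theorem is equivalent to the assertion that the bilinear pairing $\int\psi_1^0(xa_t)\psi_2^0(x)\,dx\to 0$ as $t\to\infty$ for all $\psi_1^0,\psi_2^0\in L^2_0$; up to conjugating the second argument, this is the decay of the matrix coefficients of the right regular representation along $\{a_t\}$ on the mean-zero subspace.

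Next I would set up the representation theory. Let $\pi$ denote the right regular representation of $G$ on $L^2(\Gamma_0\ba G)$. Because $G$ acts transitively on $\Gamma_0\ba G$, the only $G$-invariant functions are the constants, so the restriction $\pi|_{L^2_0}$ has no nonzero $G$-invariant vector. The required decay is then precisely the statement that the matrix coefficients of $\pi|_{L^2_0}$ vanish along the one-parameter subgroup $\{a_t\}$, which leaves every compact subset of $G$ as $t\to\infty$. This is a special case of the Howe--Moore theorem: for the simple Lie group $G=\op{PSL}_2(\c)$, any unitary representation without nonzero invariant vectors has matrix coefficients tending to $0$ at infinity.

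The core step is thus Howe--Moore's vanishing theorem, which I would establish through the Mautner phenomenon and the contraction dynamics of the horospherical subgroups $N^{\pm}$ associated to $a_t$. Using the Cartan ($KAK$) decomposition and the compactness of a maximal compact subgroup $K$, any escape $g\to\infty$ reduces, after replacing the vectors by nearby ones, to escape along $a_{t_n}$ with $t_n\to\infty$. Passing to a weak limit $\pi(a_{t_n})v\rightharpoonup w$, a nonvanishing matrix coefficient forces $w\ne 0$; since $a_{-t}N^{+}a_{t}\to e$, the strong continuity of $\pi$ and the unitarity of $\pi(a_{t_n})$ give $\pi(u)w=w$ for all $u\in N^{+}$, so $w$ is $N^{+}$-invariant. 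A closer analysis along these lines (the Mautner phenomenon) shows that a weak limit arising from a non-decaying coefficient must in fact be invariant under the subgroup generated by $N^{+}$ and $N^{-}$, which is all of $G$; as $w\ne 0$, this contradicts the absence of invariant vectors in $L^2_0$. I expect the main obstacle to be exactly this step: propagating the weak limits cleanly through the $KAK$ reduction and upgrading the contraction $a_{-t}N^{\pm}a_{t}\to e$ from weak convergence to genuine $G$-invariance. Everything else is routine once the decay of matrix coefficients is in hand.
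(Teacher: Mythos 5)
Your proposal is correct and coincides with the paper's treatment: the paper offers no proof of this statement, citing it directly to Howe--Moore \cite{HM}, and your reduction to the mean-zero subspace $L^2_0$ followed by vanishing of matrix coefficients (via the Cartan decomposition and the Mautner phenomenon for the horospherical subgroups $N^{\pm}$) is exactly the standard argument that the citation invokes.
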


This mixing theorem and its effective refinements are of fundamental importance in homogeneous dynamics, and have
many applications  in various problems in geometry and number theory.

One recent spectacular application  was found in the resolution of the surface subgroup conjecture by Kahn-Markovic \cite{KM}. 
Based on their work, as well as Wise's, Agol settled the virtually infinite betti number conjecture \cite{Ag}:
 
 \begin{thm} Any closed hyperbolic $3$-manifold $\mathcal M=\Gamma_0\ba \bH^3$ {\it virtually} has a $\z^d$-cover for any $d\ge 1$.
 \end{thm}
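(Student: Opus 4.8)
The statement is equivalent to the assertion that $\Gamma_0$ has finite-index subgroups of arbitrarily large first Betti number, so the plan is to reduce to that and then produce such subgroups from the surfaces in $\mathcal M$. A connected regular $\z^d$-cover of a finite cover $\mathcal M_1=\Gamma_1\ba\bH^3$ (with $\Gamma_1\le\Gamma_0$ of finite index) is the same datum as a surjection $\Gamma_1\twoheadrightarrow\z^d$, which exists exactly when $b_1(\Gamma_1):=\dim_\q H_1(\Gamma_1;\q)\ge d$; since $\mathcal M_1$ is aspherical, $b_1(\Gamma_1)=b_1(\mathcal M_1)$. Hence it suffices to show that $\sup_{\Gamma_1} b_1(\Gamma_1)=\infty$ over finite-index $\Gamma_1\le\Gamma_0$, which is Thurston's virtual infinite Betti number conjecture.

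First I would invoke the surface subgroup theorem of Kahn--Markovic \cite{KM}: $\Gamma_0$ contains an abundant family of quasi-Fuchsian closed surface subgroups, which moreover can be arranged to fill, i.e. to cut every geodesic. Each such subgroup is quasiconvex of codimension one, so by Sageev's wall construction (in the form carried out by Bergeron--Wise) this family yields a proper cocompact action of the word-hyperbolic group $\Gamma_0$ on a $\mathrm{CAT}(0)$ cube complex $\widetilde X$; filling is precisely what guarantees both properness and cocompactness.

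The decisive input is the passage from cubulation to virtual specialness, due to Agol \cite{Ag} and resting on Wise's special quotient theorem: a word-hyperbolic group acting properly cocompactly on a $\mathrm{CAT}(0)$ cube complex is virtually compact special. Consequently a finite-index subgroup $\Gamma_1\le\Gamma_0$ is the fundamental group of a compact special cube complex, hence embeds in a right-angled Artin group $A_\Lambda$; by the Haglund--Wise theory, quasiconvex subgroups are then separable, so $\Gamma_0$ is LERF, and $\Gamma_1$ is virtually RFRS.

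It remains to extract unbounded $b_1$ from this structure, and this is the hardest and most delicate step. Using separability I would first pass to a finite cover in which a Kahn--Markovic surface lifts to an embedded non-separating surface; its class in $H_2$ is dual by Poincar\'e duality ($\mathrm{PD}_3$) to a nonzero class in $H^1$, giving virtually positive $b_1$. Agol's RFRS fibering criterion then upgrades this to a virtual fibration $\Gamma_1\cong\pi_1(\Sigma)\rtimes\z$, and analyzing the first Betti numbers of the tower of fibered finite covers---equivalently, controlling the fixed subspaces of the homological monodromy along the fibered face of the Thurston norm---produces covers with $b_1\to\infty$. The two genuine obstacles are therefore establishing that the Kahn--Markovic surfaces are plentiful enough to cubulate $\Gamma_0$ cocompactly, and invoking Agol's virtual specialness theorem, which is by far the deepest ingredient; granting these, the passage from virtual fibering to arbitrarily large Betti numbers is the remaining quantitative point.
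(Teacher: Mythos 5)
The paper does not actually prove this theorem: it quotes it as background, citing Kahn--Markovic \cite{KM}, Wise, and Agol \cite{Ag}, so the only fair comparison is with the literature argument you are reconstructing. Your reduction to virtually infinite first Betti number is correct, and your pipeline --- Kahn--Markovic surface subgroups, cocompact cubulation \`a la Bergeron--Wise (filling giving properness and cocompactness of the wall action), Agol's virtual specialness via Wise's special quotient theorem, hence virtual embedding in a right-angled Artin group, separability and virtual RFRS --- is exactly the standard proof up to that point.

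The endgame, however, has a genuine gap. First, separability (Scott's criterion) lets you lift a Kahn--Markovic surface to an \emph{embedded} surface in a finite cover, but nothing forces that lift to be non-separating, so you have not yet produced a nonzero class in $H^1$; this is precisely the step where one needs more than separability. Second, and more seriously, the claim that analyzing ``fixed subspaces of the homological monodromy'' along a virtual fibration produces covers with $b_1\to\infty$ is false as a mechanism: for the $N$-fold cyclic cover of a fibration with fiber $\Sigma$ and monodromy $\varphi$ one has $b_1 = 1 + \dim\ker(\varphi_*^N - I)$ on $H_1(\Sigma;\mathbb{Q})$, which remains $1$ for all $N$ whenever no eigenvalue of $\varphi_*$ is a root of unity, and fibering alone cannot give infinite virtual $b_1$ (Sol torus bundles fiber yet have uniformly bounded virtual $b_1$). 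The correct and standard conclusion bypasses fibering entirely: by Haglund--Wise, every quasiconvex subgroup of a virtually compact special hyperbolic group is a \emph{virtual retract}. Applying this either to Kahn--Markovic surfaces of arbitrarily large genus $g$ (yielding finite-index $\Gamma_1\le\Gamma_0$ with $b_1(\Gamma_1)\ge 2g$), or to a quasiconvex free subgroup $F_2\le\Gamma_0$ (yielding largeness, and then pulling back finite-index subgroups of $F_2$ of arbitrarily large abelianized rank), gives $\sup b_1(\Gamma_1)=\infty$ directly, hence surjections $\Gamma_1\twoheadrightarrow\mathbb{Z}^d$ for every $d$. With that substitution your outline becomes the accepted proof; as written, the final quantitative step would fail.
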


That is, after passing to a subgroup of finite index,
$\Gamma_0$ contains a normal subgroup $\Gamma$ with $\Gamma\ba \Gamma_0\simeq \z^d$, and
hence the hyperbolic $3$-manifold $\Gamma\ba \bH^3$ is a regular cover of $\mathcal M$ whose deck transformation group is isomorphic to $\z^d$.

 On the frame bundle of the $\z^d$-cover $\Gamma\ba \bH^3$, the strong mixing property fails  \cite{HM}, because
for any $\psi_1, \psi_2\in C_c(\Gamma\ba G)$,
 $$\lim_{t\to \infty} \int_{\Gamma\ba G} \psi_1(xa_t)\psi_2(x) \, dx =0.$$

The aim of this paper is to formulate and prove the {\it local mixing} property of the frame flow for any abelian
cover of a closed hyperbolic $3$-manifold, or more generally for any abelian cover
of a convex cocompact rank one locally symmetric space. The local mixing property is an appropriate substitute of the strong mixing property in the setting of an infinite volume homogeneous space, and
has similar applications. We will establish a  classification theorem for measures invariant under the horospherical subgroup, extending the work
of Babillot, Ledrappier and Sarig. We will also describe
applications to the prime geodesic theorem as well as other counting and equidistribution problems.

\subsection{ Local limit theorem.} In order to motivate our definition of the local mixing property, we
recall the classical local limit theorem on the Euclidean space $\br^d$ \cite{Bre}:
\begin{thm}[Local limit theorem]  
For any absolutely continuous compactly supported probability measure $\mu$ on $\br^d$,
and any continuous function $\psi$ on $\br^d$ with compact support,
$$\lim_{n\to +\infty} n^{d/2} \int \psi \; d\mu^{*n}=c(\mu) \int_{\br^d} \psi(x) dx$$
where $\mu^{*n}$ denotes the $n$-th convolution of $\mu$ and $c(\mu)>0$ is a constant depending only on
 $\mu$. \end{thm}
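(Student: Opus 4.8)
The natural tool is the Fourier transform, under which convolution becomes multiplication. Write the characteristic function $\hat\mu(\xi)=\int_{\br^d}e^{-i\langle\xi,x\rangle}\,d\mu(x)$, so that $\widehat{\mu^{*n}}=\hat\mu^{\,n}$, and normalize $\mu$ to have barycenter $\int x\,d\mu(x)=0$ (a harmless translation; this centering is exactly what makes the limiting constant positive, since a nonzero mean would transport the mass of $\mu^{*n}$ off to infinity and force the left-hand side to vanish). Because $\mu$ is compactly supported it has moments of all orders and $\hat\mu$ is smooth, with $\hat\mu(\xi)=1-\tfrac12\langle\xi,\Sigma\xi\rangle+o(|\xi|^2)$ near the origin, where $\Sigma$ is the covariance matrix of $\mu$. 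Absolute continuity implies $\mu$ charges no hyperplane, so $\langle\xi,\Sigma\xi\rangle=\operatorname{Var}(\langle\xi,\cdot\rangle)>0$ for every $\xi\neq0$ and $\Sigma$ is positive definite; this will ultimately produce the value $c(\mu)=(2\pi)^{-d/2}(\det\Sigma)^{-1/2}>0$.

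The plan is to prove the statement first for Schwartz (or smooth compactly supported) $\psi$, where $\hat\psi$ is integrable, and then remove the smoothness by approximation. For such $\psi$, Fourier inversion together with Fubini gives
\[
\int\psi\,d\mu^{*n}=\frac{1}{(2\pi)^d}\int_{\br^d}\hat\psi(\xi)\,\hat\mu(\xi)^n\,d\xi ,
\]
and the rescaling $\xi=\eta/\sqrt n$ yields
\[
n^{d/2}\int\psi\,d\mu^{*n}=\frac{1}{(2\pi)^d}\int_{\br^d}\hat\psi(\eta/\sqrt n)\,\hat\mu(\eta/\sqrt n)^n\,d\eta .
\]
For each fixed $\eta$ the Taylor expansion gives $\hat\mu(\eta/\sqrt n)^n\to\exp(-\tfrac12\langle\eta,\Sigma\eta\rangle)$, while $\hat\psi(\eta/\sqrt n)\to\hat\psi(0)=\int_{\br^d}\psi$; thus the integrand converges pointwise to $\hat\psi(0)\exp(-\tfrac12\langle\eta,\Sigma\eta\rangle)$, whose integral over $\br^d$ is $\hat\psi(0)\,(2\pi)^{d/2}(\det\Sigma)^{-1/2}$, i.e. precisely $(2\pi)^d\,c(\mu)\int_{\br^d}\psi$.

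The only real work is to justify the interchange of limit and integral, and this is where absolute continuity enters decisively. I would split the $\eta$-integral at $|\eta|=\delta\sqrt n$ for a small fixed $\delta$. On the inner region $|\xi|=|\eta|/\sqrt n\le\delta$, the quadratic expansion and positivity of $\Sigma$ give $|\hat\mu(\xi)|\le e^{-c_0|\xi|^2}$ with $c_0>0$, hence $|\hat\mu(\eta/\sqrt n)^n|\le e^{-c_0|\eta|^2}$; together with $|\hat\psi|\le\|\hat\psi\|_\infty$ this furnishes the integrable Gaussian dominator $\|\hat\psi\|_\infty e^{-c_0|\eta|^2}$, so dominated convergence yields the Gaussian integral above. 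On the outer region $|\xi|\ge\delta$ I would invoke the non-lattice estimate: absolute continuity forces $|\hat\mu(\xi)|<1$ for all $\xi\neq0$, while Riemann–Lebesgue gives $\hat\mu(\xi)\to0$ at infinity, so by continuity $\rho:=\sup_{|\xi|\ge\delta}|\hat\mu(\xi)|<1$; the outer contribution to $n^{d/2}\int\psi\,d\mu^{*n}$ is then bounded by $(2\pi)^{-d}n^{d/2}\rho^n\|\hat\psi\|_{L^1}$, which tends to $0$ since $\rho<1$. This completes the smooth case. Finally, to treat an arbitrary continuous compactly supported $\psi$, I would approximate it uniformly by smooth $\phi$ supported in a fixed compact $K\supseteq\supp\psi$ and control the error via the uniform mass bound $\sup_n n^{d/2}\mu^{*n}(K)<\infty$, itself obtained by applying the smooth case to a fixed bump dominating $\mathbf 1_K$. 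I expect this last split—balancing the Gaussian inner estimate against the non-lattice outer estimate—to be the principal technical obstacle.
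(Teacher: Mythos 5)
Your proof is correct and complete. Note that the paper does not prove this statement at all --- it is quoted from Breiman \cite{Bre} purely as motivation for the definition of local mixing --- so there is no internal proof to compare against; your argument is the standard characteristic-function proof, which is also the textbook route of the cited source, and every substantive step is properly justified: positive definiteness of $\Sigma$ from absolute continuity (no mass on hyperplanes), the Gaussian domination $|\hat\mu(\xi)|\le e^{-c_0|\xi|^2}$ near the origin, the aperiodicity bound $\rho=\sup_{|\xi|\ge\delta}|\hat\mu(\xi)|<1$ obtained by combining $|\hat\mu(\xi)|<1$ for $\xi\ne 0$ with Riemann--Lebesgue decay, and the passage from smooth to merely continuous $\psi$ via the uniform mass bound $\sup_n n^{d/2}\mu^{*n}(K)<\infty$ extracted from the smooth case. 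Two small remarks. First, you correctly identified that the statement as printed is false without centering: if $\int x\,d\mu\ne 0$ then $n^{d/2}\int\psi\,d\mu^{*n}\to 0$ for every compactly supported $\psi$, contradicting $c(\mu)>0$; but calling the centering a ``harmless translation'' is slightly misleading, since replacing $\mu$ by its centered version changes the measure and hence the assertion --- what you are really doing is proving the theorem under the implicit hypothesis of zero barycenter, which is how the classical sources state it, and it would be cleaner to say so explicitly. Second, with your conventions the inversion step should read $\int\psi\,d\mu^{*n}=(2\pi)^{-d}\int\hat\psi(-\xi)\,\hat\mu(\xi)^n\,d\xi$ (or carry a conjugate); the discrepancy is immaterial to the limit, since only $\hat\psi(0)$ and $|\hat\mu|$ enter the estimates, but it is worth fixing for the record.
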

The virtue of this theorem is that although  the sequence $\mu^{*n}$ weakly converges to zero, the re-normalized measure
$n^{d/2}\mu^{*n}$ converges to a non-trivial locally finite measure on $\br^d$, which is the Lebesgue measure
in this case.

\subsection{ Local mixing theorem.}  
Let $G$ be a connected semisimple linear Lie group and $\Gamma<G$ a discrete subgroup. Let $\{a_t: t\in \br \}$ be a one-parameter
diagonalizable subgroup of $G$, acting on $\Gamma\ba G$ by right translations. 
Denote by  $C_c(\Gamma\ba G)$ the space of all continuous functions with compact support.
For  a compactly supported probability measure $\mu$  on $\Gamma \ba G$, 
we consider the following family $\{\mu_t\}$ of probability measures on $\Gamma\ba G$ translated by the flow $a_t$: for $\psi\in C_c(\Gamma\ba G)$,
$$\mu_t (\psi):=\int_{\Gamma\ba G}\psi(xa_t) d\mu(x).$$

We formulate the following notion, which is analogous to the local limit theorem for $\br^d$:
\begin{dfn}   \rm A probability measure $\mu$ on $\Gamma\ba G$ has the local mixing property for $\{a_t\}$
if
there exist a positive function $\alpha$ on $\br_{>0}$ and a non-trivial Radon measure $\mathsf{m}$ on $\Gamma \ba G$
such that for any $\psi\in C_c(\Gamma\ba G)$,
$$\lim_{t\to +\infty} \alpha(t) \int \psi (x)\, d\mu_t(x) =\int \psi \,d\mathsf{m}(x).$$
\end{dfn}

If $\Gamma<G$ is a lattice, then any absolutely continuous probability measure
$\mu$ has the local mixing property for $\{a_t\}$ \cite{HM}. If $\Gamma=\{e\}$, no probability measure has
the local mixing for $\{a_t\}$.

\medskip

We focus on the rank one situation in which case
the action of $a_t$ induces
 the geodesic flow on the corresponding locally symmetric space.
Throughout the introduction, suppose that $G$ is a connected simple Lie group of real rank one, that is, $G$ is the group of orientation preserving isometries
of a simply connected Riemmanian space $\tilde X$ of rank one.
Let $\Gamma_0<G$ be a Zariski dense and  convex cocompact subgroup of $G$, i.e.
its convex core, which is the quotient of
 the convex hull of the limit set of $\Gamma_0$ by $\Gamma_0$, is compact. For instance,
 $\Gamma_0$ can be a cocompact lattice of $G$. 
 Let $$\Gamma< \Gamma_0$$ be a normal subgroup with $\z^d$-quotient.
Then $X:=\Gamma\ba \tilde X$ is a regular cover of $X_0:=\Gamma_0\ba \tilde X$ whose group of deck transformations
is isomorphic to $\z^d$.
  Let $\{a_t\in G:t\in \br\}$ be a one-parameter subgroup which is the
  lift of the geodesic flow  $\mathcal G^t$ on the unit tangent bundle $\T^1(X)$.
   When $\tilde X$ is a real hyperbolic space,
  the $a_t$ action on $\Gamma\ba G$ corresponds to the frame flow on the oriented frame bundle of $X$.

\begin{figure}\label{closing}
  \begin{center}
   \includegraphics[width=2.0in]{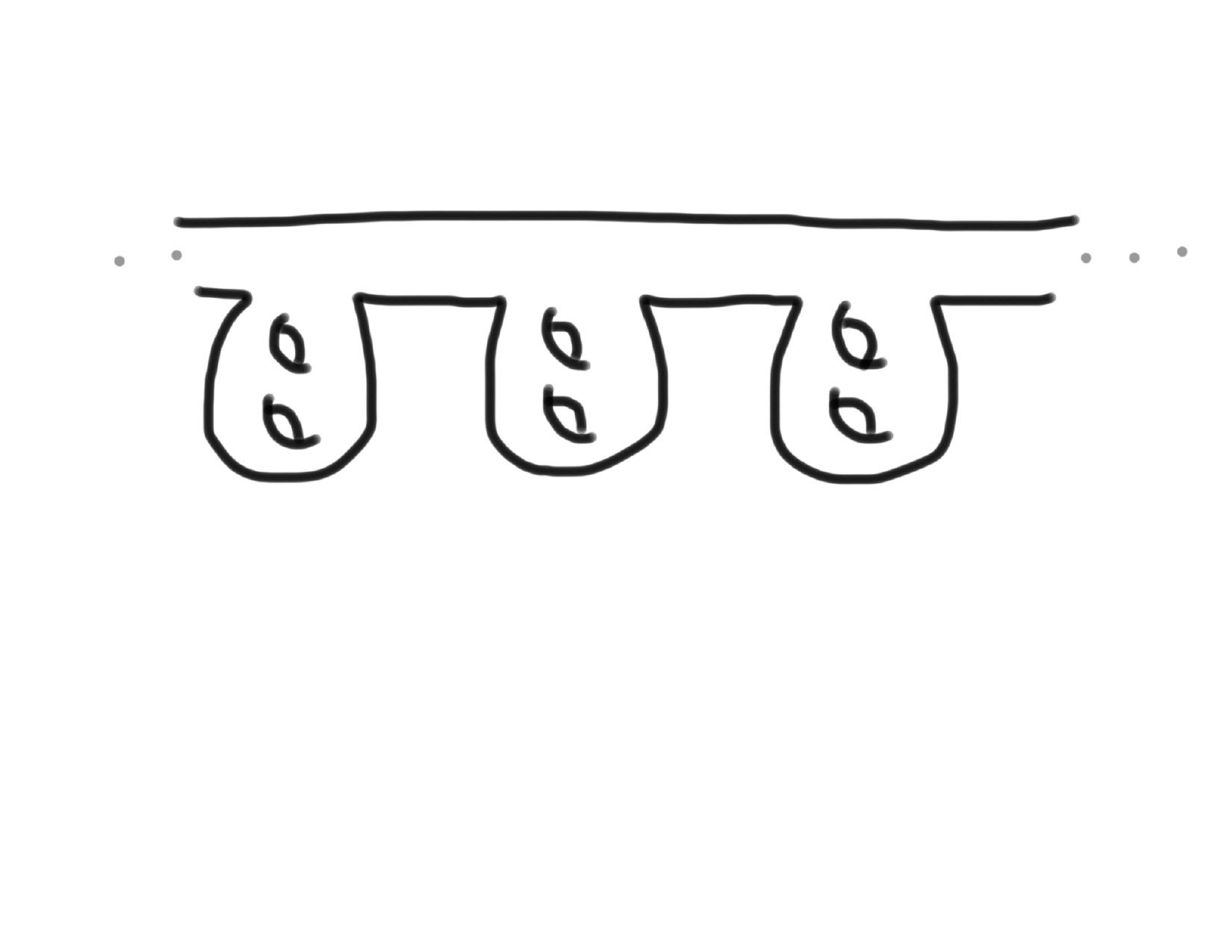} 
   \includegraphics[width=2.0in]{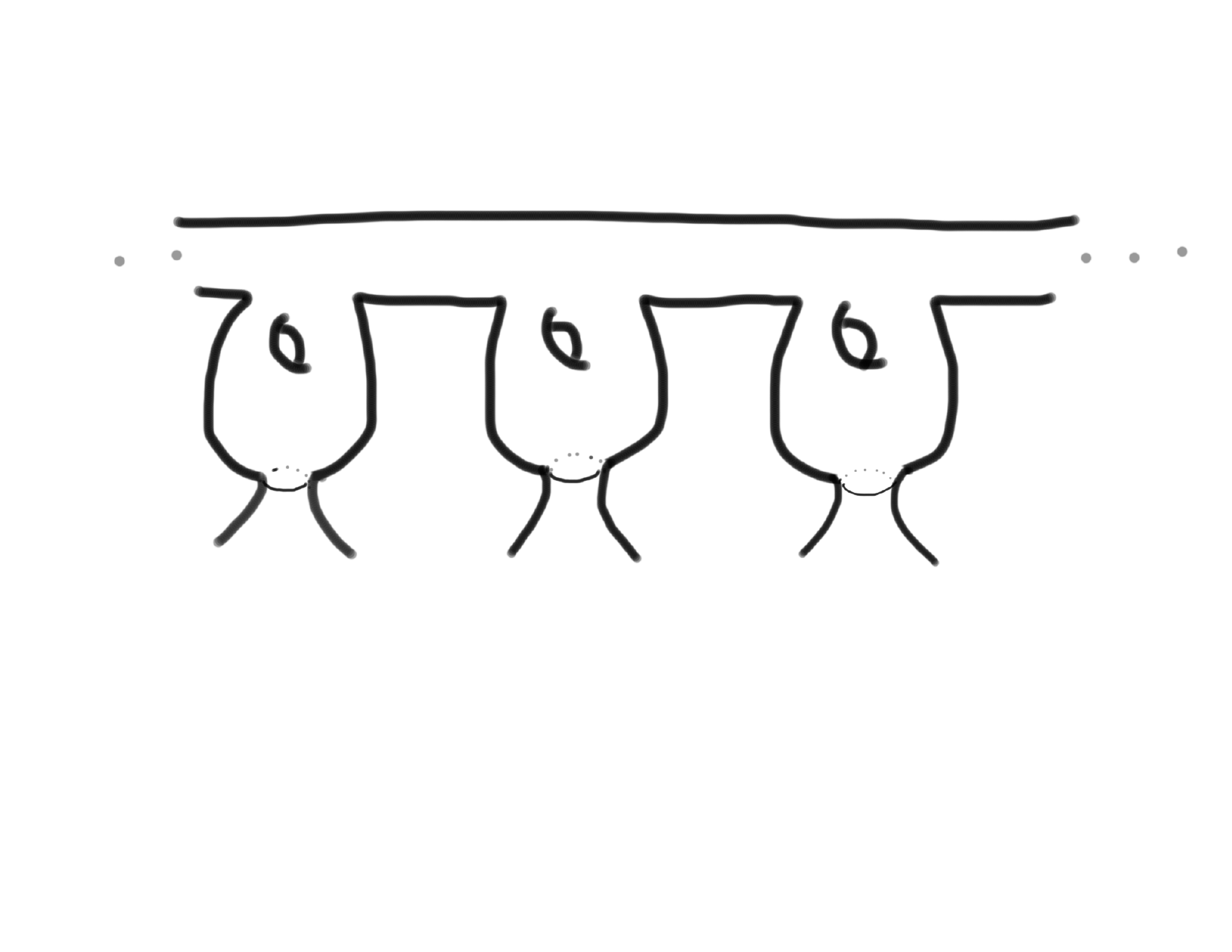}
 \end{center}\caption{$Z$-covers of convex cocompact surfaces}
\end{figure}

Denote by $\mathcal P_{\mathsf{acc}}(\Gamma\ba G)$
the space of compactly supported absolutely continuous
 probability measures on $\Gamma\ba G$ with continuous densities.

\begin{thm} [Local mixing theorem] \label{mm1} 
Any $\mu\in \mathcal P_{\mathsf{acc}}(\Gamma\ba G)$ has the local mixing property for $\{a_t\}$: for $\psi\in C_c(\Gamma \ba G)$,
$$\lim_{t\to +\infty} t^{d/2} e^{(D-\delta) t} \int \psi \, d\mu_t=c(\mu) \cdot  \int \psi \,d\mathsf{m}^{\BR_+}$$
where  $D$ is the volume entropy of  $\tilde X$, $\delta$ is the
critical exponent of $\Gamma_0$, $\mathsf{m}^{\BR_+}$ is the Burger-Roblin measure on $\Gamma\ba G$
for the expanding horospherical subgroup,
and
$c(\mu)>0$ is a constant depending on $\mu$.
\end{thm}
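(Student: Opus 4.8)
The plan is to reduce the correlation on the $\z^d$-cover to a one-parameter family of twisted correlations on the compact base $\Gamma_0\ba G$, indexed by the unitary characters of the deck group, and then to extract the asymptotics by a dynamical local limit theorem. Write $\Lambda=\Gamma_0/\Gamma\cong\z^d$ for the deck group, acting on $\Gamma\ba G$ and commuting with the right $G$-action, and let $\widehat\Lambda\cong\mathbb T^d$ be its dual. The homomorphism $\phi\colon\Gamma_0\to\z^d$ with kernel $\Gamma$ produces, for each character $\xi\in\mathbb T^d$, a flat unitary line bundle $L_\xi$ over $\Gamma_0\ba G$, and Fourier analysis over $\Lambda$ expresses $\mu_t(\psi)=\int_{\mathbb T^d}C_\xi(t)\,d\xi$, where $C_\xi(t)$ is the time-$t$ correlation of the geodesic flow acting on the $\xi$-equivariant functions attached to $\mu$ and $\psi$ over the base (the off-diagonal terms drop out since $a_t$ commutes with the deck action). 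At $\xi=0$ this is an ordinary correlation on the convex cocompact space $\Gamma_0\ba G$; the whole problem is to understand $C_\xi(t)$ for small $\xi$ and to integrate.

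First I would analyze each $C_\xi(t)$ through twisted thermodynamic formalism. Coding the geodesic flow on the convex core by a suspension over a subshift of finite type, the deck displacement defines a H\"older cocycle $\tau$ with values in $\z^d$, and the relevant object is the family of Ruelle transfer operators $\mathcal{L}_{s,\xi}$ twisting the geometric potential by $e^{2\pi i\langle\xi,\tau\rangle}$. At $\xi=0$, $s=\delta$ the leading eigenvalue equals $1$ (Bowen's pressure equation), and its eigendata recover the Patterson--Sullivan density and hence, on the flow side, the $\BMS$ and Burger--Roblin measures; combined with Roblin's mixing and the horospherical thickening of Roblin and Oh--Shah this gives $C_0(t)\sim \text{const}\cdot e^{-(D-\delta)t}$ with the constant carrying $\mathsf m^{\BR_+}(\psi)$, because $\mu$ is absolutely continuous and flowing forward equidistributes its mass along the expanding horospherical subgroup $N^{+}$. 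For $\xi$ near $0$, analytic perturbation theory gives a simple leading eigenvalue $\lambda(\xi)=\exp(\kappa(\xi))$ with $\kappa(0)=0$, so that $C_\xi(t)\sim e^{-(D-\delta)t}\,e^{t\kappa(\xi)}A(\xi)$ for a smooth amplitude $A$ with $A(0)$ proportional to $\mathsf m^{\BR_+}(\psi)$.

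The key structural input is the Taylor expansion of $\kappa$. The $\BMS$ measure is invariant under the flip reversing geodesics, under which $\tau$ changes sign; hence the drift $\int\tau\,dm^{\BMS}$ vanishes, $\kappa$ has no linear term, and $\kappa(\xi)=-\tfrac12\langle\Sigma\xi,\xi\rangle+O(|\xi|^{3})$. Non-degeneracy of the covariance $\Sigma$ follows from the fact that $\Gamma$ remains Zariski dense (the commutator subgroup of a Zariski dense subgroup of a simple group is Zariski dense), so $\tau$ is not cohomologous into a proper subspace. Substituting into $\int_{\mathbb T^d}C_\xi(t)\,d\xi$ and rescaling $\xi=\eta/\sqrt t$, the neighborhood of $\xi=0$ contributes $e^{-(D-\delta)t}\,t^{-d/2}\bigl((2\pi)^{d/2}(\det\Sigma)^{-1/2}A(0)\bigr)(1+o(1))$, which is exactly the asserted rate $t^{-d/2}e^{-(D-\delta)t}$ together with the limit measure $\mathsf m^{\BR_+}$; all $\mu$-dependent factors collect into $c(\mu)>0$.

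The main obstacle is to show that this $\xi\approx0$ contribution dominates, i.e. that $C_\xi(t)$ is exponentially smaller, uniformly, once $\xi$ is bounded away from $0$ in $\mathbb T^d$. This requires two things. First, aperiodicity: $\mathcal{L}_{\delta,\xi}$ has spectral radius strictly below $1$ for every $\xi\neq0$, which amounts to ruling out that $\tau$ is cohomologous to a $\z^d$-valued coboundary plus a proper-subgroup-valued cocycle; this is where Zariski density of $\Gamma$ is used decisively. Second, since the transfer operators are non-compact and live on a non-compact symbolic space, one needs a uniform essential spectral gap and a Dolgopyat--Stoyanov type oscillatory-cancellation estimate to control the full operator, not merely its top eigenvalue, uniformly for $\xi$ in compact subsets of $\mathbb T^d\setminus\{0\}$, and to make the perturbation near $0$ effective. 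Establishing these uniform spectral bounds, and then matching the abstract amplitude $A(0)$ with the Burger--Roblin measure on the cover, is the technical heart of the argument; the remaining passage from $\BMS$-correlations to absolutely continuous $\mu$ and to $C_c$ test functions is the standard approximation and thickening machinery.
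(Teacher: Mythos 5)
Your plan is, in its skeleton, the same as the paper's: the decomposition of $\mu_t(\psi)$ over characters of the deck group is exactly the paper's Fourier integral over $v\in\bT^d$ hidden in the operators $\Q_{\mu,w,T}$ (via $\delta_\xi(f_n(y))=\tfrac{1}{(2\pi)^d}\int_{\bT^d}e^{i\langle w,\xi-f_n(y)\rangle}dw$), your twisted operators $\mathcal L_{s,\xi}$ are the paper's $L_{s,v,\mu}$, your $\kappa(\xi)$ is $P(v)-\delta$ with $\nabla P(0)=0$ and $\nabla^2P(0)>0$, and your Gaussian rescaling reproduces the constant $C(0)=(2\pi/\sigma)^{d/2}$. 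But there is a genuine gap: you code only the geodesic flow and twist only by characters of $\z^d$, whereas the theorem concerns test functions in $C_c(\Gamma\ba G)$ and the measure $\mathsf m^{\BR_+}$ on $\Gamma\ba G$ --- that is, the \emph{frame} flow, an $M$-extension of the geodesic flow. Your argument, as written, proves local mixing only for $M$-invariant observables. To handle general $\psi\in C_c(\Gamma\ba G)$ one must decompose along $\hat M$ as well and rule out peripheral spectrum for the operators twisted by nontrivial $\mu\in\hat M$; this is not formal. In the paper it occupies Proposition \ref{den} and Theorem \ref{sp2}(1), and rests on two inputs you never invoke: Winter's density of the transitivity group (giving a dense orbit in $\Sigma^+\times M$, hence forcing any peripheral eigenrepresentation to be one-dimensional), and Prasad--Rapinchuk's existence of elements whose holonomy generates a dense subgroup of $Z(M)$, which together with the orientation-reversal symmetry (Lemma \ref{rev}) kills the one-dimensional twists. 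Without this, the $M$-coordinate need not equidistribute (frame-flow mixing is strictly stronger than geodesic-flow mixing), and no thickening argument recovers it, since $a_t$ centralizes $M$.

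Two further points where you misdiagnose the difficulty, though these are repairable rather than fatal. First, no Dolgopyat--Stoyanov estimate is needed anywhere: since the sought asymptotic is not effective, one may take test functions $u$ with $\hat u\in C_c^N(\br)$, so only a compact set of time-frequencies enters, and there uniform boundedness of $(1-L_{\delta-it,v,\mu})^{-1}$ follows from continuity plus the absence of peripheral eigenvalue $1$ (Theorems \ref{sp2} and \ref{mrr}); moreover, for convex cocompact $\Gamma_0$ the Markov coding is a \emph{compact} subshift of finite type, so your worry about non-compact symbolic spaces does not arise. Second, you fold the factor $e^{-(D-\delta)t}$ and the identification of $A(0)$ with $\mathsf m^{\BR_+}(\psi)$ into each twisted correlation $C_\xi(t)$, which would require the thickening machinery uniformly in $\xi$; the paper's cleaner route is to run the entire symbolic analysis for the BMS measure on the cover (Theorem \ref{m1}, rate $t^{-d/2}$, limits $\mathsf m^{\BMS}(\psi_i)$) and then apply a single global comparison theorem (Theorem \ref{com}, the Roblin/Oh--Winter transversality argument with $N^-AM$) converting BMS-correlation asymptotics at rate $J(t)$ into Haar-correlation asymptotics at rate $J(t)e^{(D-\delta)t}$ with limits $\mathsf m^{\BR_+}(\psi_1)\mathsf m^{\BR_-}(\psi_2)$; Theorem \ref{mm1} then follows by writing $d\mu=\psi_2\,d\mathsf m^{\Haar}$. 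Finally, your appeal to flip-symmetry for $\nabla P(0)=0$ is correct, but the non-degeneracy of $\nabla^2P(0)$ comes from the surjectivity of the Frobenius cocycle $f$ onto $\z^d$ and non-arithmeticity of the length spectrum (as in \cite[Lem.~8]{PS} and Lemma \ref{sha}), not from Zariski density of $\Gamma$ per se.
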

 We remark that  the
critical exponent $\delta$ of $\Gamma_0$ is same as that of $\Gamma$ by \cite{DS}. 

 Denote by $dx$ a $G$-invariant measure on $\Gamma\ba G$.
  Theorem \ref{mm1} is deduced from the following theorem, which describes the precise asymptotic
of the correlation functions.
 
\begin{thm} \label{mix2}
 For  $\psi_1, \psi_2\in C_c(\Gamma \ba G)$,
$$ \lim_{t\to +\infty} {t^{d/2} e^{(D-\delta)t}} \int_{\Gamma \ba G}\psi_1(xa_t) \psi_2(x) \; dx= 
 \frac{\mathsf{m}^{\op{BR_+}}( \psi_1) \mathsf{m}^{\op{BR}_-}( \psi_2)}{(2\pi\sigma)^{d/2} \mathsf m^{\BMS}(\Gamma_0\ba G)},
$$
where $\mathsf{m}^{\BMS}$ is the Bowen-Margulis-Sullivan measure on $\Gamma_0\ba G$,
$\mathsf{m}^{\BR_-}$ is the Burger-Roblin measure on $\Gamma\ba G$
for the contracting horospherical subgroup,
and $\sigma=\sigma_\G>0$ is
a constant given in \eqref{ds}.
\end{thm}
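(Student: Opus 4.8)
The plan is to Fourier-decompose the correlation in the $\z^d$-direction of the cover, reduce to a family of twisted matrix coefficients on the \emph{compact-core} base $\Gamma_0\ba G$, and then extract the leading term by a local-limit (Laplace) argument on the dual torus.

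\textbf{Step 1 (direct integral over the dual torus).} Since $\Gamma\triangleleft\Gamma_0$ with $\Gamma_0/\Gamma\cong\z^d$, the deck group $\z^d$ acts on $\Gamma\ba G$ commuting with the right $a_t$-action, and the quotient is $\Gamma_0\ba G$. Writing $\widehat{\z^d}=\br^d/\z^d$ with Haar probability measure $d\xi$ and $\chi_\xi(k)=e^{2\pi i\langle k,\xi\rangle}$, one has the Plancherel/direct-integral decomposition of $L^2(\Gamma\ba G)$ into the spaces $H_\xi$ of $\chi_\xi$-equivariant sections of the flat line bundle over $\Gamma_0\ba G$ with holonomy $\chi_\xi$, each preserved by $a_t$. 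Sending $\psi_i\in C_c(\Gamma\ba G)$ to its field $\xi\mapsto\psi_i(\xi)\in H_\xi$, I would write
$$\int_{\Gamma\ba G}\psi_1(xa_t)\psi_2(x)\,dx=\int_{\widehat{\z^d}} M_t(\xi)\,d\xi,\qquad M_t(\xi)=\langle a_t\,\psi_1(\xi),\,\psi_2(\xi)\rangle_{H_\xi}.$$
The mode at $\xi=0$ is the fiberwise pushforward $\psi_i(0)=\sum_{k}\psi_i(k\cdot\,)\in L^2(\Gamma_0\ba G)$, whose $\mathsf m^{\BR_\pm}$-integral on $\Gamma_0\ba G$ equals $\mathsf m^{\BR_\pm}(\psi_i)$ on $\Gamma\ba G$ (using that the Burger-Roblin measure on the cover projects to that on the base, and $\delta(\Gamma)=\delta(\Gamma_0)$); this will produce the numerator of the claimed constant.

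\textbf{Step 2 (twisted transfer operators).} Restricting to the non-wandering set, which is compact by convex cocompactness, I would code the geodesic flow by Markov sections and realize the winding of lifted geodesics as a $\z^d$-valued H\"older cocycle $\Phi$ over the associated subshift. The asymptotics of $M_t(\xi)$ are then governed by the family of Ruelle transfer operators for the BMS potential twisted by $\chi_\xi\circ\Phi$; let $\lambda(\xi)$ be the leading eigenvalue, and $M_t(\xi)\sim B(\xi)\,e^{-c(\xi)t}$ the resulting leading asymptotic, where $c(\xi)$ is the induced decay exponent. By analytic perturbation theory $c$ is real-analytic near $0$ with $c(0)=D-\delta$, reproducing the mixing of the BMS measure on the base so that $B(0)=\mathsf m^{\BR_+}(\psi_1)\,\mathsf m^{\BR_-}(\psi_2)/\mathsf m^{\BMS}(\Gamma_0\ba G)$. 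Since $\Gamma$ is a \emph{normal} cover the cocycle is centered, so $\nabla c(0)=0$ and $c(\xi)=(D-\delta)+\tfrac12\langle\Sigma\xi,\xi\rangle+o(|\xi|^2)$, with $\Sigma$ the (positive-definite) covariance of $\Phi$ under $\mathsf m^{\BMS}$; this Hessian is the content of the constant $\sigma$ of \eqref{ds}. Crucially, aperiodicity of $\Phi$ forces the spectral radius of the twisted operator to be \emph{strictly} subcritical for every $\xi\neq 0$, so $M_t(\xi)$ decays with a strictly larger exponent off any neighborhood of $0$.

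\textbf{Step 3 (local limit / Laplace).} Feeding the expansion of Step 2 into the integral of Step 1, the subcritical contribution away from $\xi=0$ is exponentially negligible, and the substitution $\xi=u/\sqrt t$ concentrates the mass near $0$. The resulting Gaussian integration over $\br^d$ produces the normalizing factor $t^{-d/2}(2\pi\sigma)^{-d/2}$ dictated by $\Sigma$, while the leading coefficient contributes $B(0)$; multiplying by $t^{d/2}e^{(D-\delta)t}$ and passing to the limit yields exactly $\mathsf m^{\BR_+}(\psi_1)\,\mathsf m^{\BR_-}(\psi_2)/\big((2\pi\sigma)^{d/2}\,\mathsf m^{\BMS}(\Gamma_0\ba G)\big)$, as claimed.

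\textbf{Main obstacle.} The heart of the matter is the uniformity in $\xi$ needed to legitimize Step 3: one needs a renewal/Tauberian asymptotic for the twisted correlations $M_t(\xi)$ that holds \emph{uniformly} near $\xi=0$ (with controlled error) together with the strict spectral gap for all $\xi\neq 0$. Establishing the latter—aperiodicity of the $\z^d$-cocycle $\Phi$, i.e.\ that it is not cohomologous to a cocycle valued in a proper closed subgroup—is where Zariski density of $\Gamma_0$ is indispensable, and matching the symbolic equilibrium-state normalizations with the geometric measures $\mathsf m^{\BR_\pm}$ and $\mathsf m^{\BMS}$ requires additional care.
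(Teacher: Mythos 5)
Your Steps 1--3 correctly reproduce the paper's strategy for the \emph{BMS} correlations: Fourier decomposition in the $\z^d$-direction, twisted Ruelle operators with a spectral gap off the trivial character, and a Gaussian/Laplace extraction of the factor $t^{-d/2}(2\pi\sigma)^{-d/2}$ (this is the content of Theorems \ref{mrr}, \ref{pq} and \ref{m1}). But there is a genuine gap in how you treat the Haar correlation itself. In Step 2 you propose to ``restrict to the non-wandering set'' and read off the asymptotics of the twisted Haar coefficients $M_t(\xi)$ from transfer operators for the BMS potential. This cannot work as stated when $\delta<D$: the Haar measure is not carried by the compact non-wandering set $\Omega$, the test functions $\psi_i\in C_c(\Gamma\ba G)$ are not supported there, and the symbolic suspension model only computes correlations against Gibbs measures such as $\mathsf m^{\BMS}$ --- it says nothing directly about $\int\psi_1(xa_t)\psi_2\,d\mathsf m^{\Haar}$. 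The exponential factor $e^{(D-\delta)t}$ and the replacement of $\mathsf m^{\BMS}(\psi_i)$ by $\mathsf m^{\BR_\pm}(\psi_i)$ come from a separate global argument that is absent from your proposal: the paper first proves the full statement on the cover for $\mathsf m^{\BMS}$ (Theorem \ref{m1}, with the $t^{-d/2}$ factor already in place), and only then applies a single \emph{untwisted} comparison theorem (Theorem \ref{com}, following Roblin and \cite[Thm. 5.8]{OW}) which converts BMS local mixing into Haar local mixing using the product structures of $\mathsf m^{\BMS}$ and $\mathsf m^{\Haar}$ and the transversality of $a_t$-translates of horospherical pieces. Your ordering --- Fourier-decompose the Haar correlation first, then analyze each twisted mode --- would instead require a $\chi_\xi$-twisted version of this Roblin-type comparison, uniformly in $\xi$ near $0$; this is exactly the ``main obstacle'' you flag but do not resolve, and the paper's architecture (all Fourier analysis done at the BMS/symbolic level, comparison done once, untwisted, on the cover) is precisely what makes that obstacle disappear.

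A second omission: your coding keeps track only of the $\z^d$-winding cocycle, i.e.\ you work on $\T^1(X)$, whereas the theorem concerns functions on the frame bundle $\Gamma\ba G$. For general $\psi_i\in C_c(\Gamma\ba G)$ one must also expand along $\hat M$ and show that every non-trivial $\mu\in\hat M$ contributes nothing; this is why the paper uses the three-parameter operators $L_{z,v,\mu}$ and needs Proposition \ref{den} (density of $\{(\sigma^n y,\theta_n(y))\}$ and Prasad--Rapinchuk for dense holonomy in $Z(M)$) together with Theorem \ref{sp2}(1) and Theorem \ref{pq}(3). Zariski density of $\Gamma_0$ enters there, not --- as you suggest --- for the aperiodicity of the $\z^d$-cocycle, which in the paper follows from surjectivity of $f$ plus non-arithmeticity of the length spectrum \cite{Ki} (Lemma \ref{sha}). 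Finally, a small inaccuracy: the centering $\nabla P(0)=0$ is not a consequence of normality of the cover alone but of the time-reversal symmetry of the flow (Lemma \ref{rev}; cf.\ \cite[Lem. 8]{PS}). The holonomy issue is repairable along the paper's lines, but as written your Step 2 asserts the key asymptotic $M_t(\xi)\sim B(\xi)e^{-c(\xi)t}$ without any mechanism that actually applies to the Haar measure, so the proof is incomplete at its central step.
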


For the trivial cover, i.e. when $d=0$, this theorem was obtained  by Winter \cite{Win}, based on the earlier
work of Babillot \cite{Ba}.

If $\Gamma_0<G$ is cocompact, then $D=\delta$ and the measures $\mathsf m^{\BR_+}$,
$\mathsf m^{\BR_-}$ and $\mathsf m^{\BMS}$ are all proportional to the invariant measure $dx$. Hence
 the following is a special case of Theorem \ref{mix2}:
\begin{thm} \label{mix123} Let $\Gamma\ba G$ be a $\z^d$-cover of a compact rank one space $\Gamma_0\ba G$.
 For $\psi_1, \psi_2\in C_c(\Gamma \ba G)$, we have
$$ \lim_{t\to +\infty} {t^{d/2} } \int \psi_1(xa_t) \psi_2(x) \; dx= 
\frac{1}{(2\pi\sigma)^{d/2} }\int \psi_1 dx\int \psi_2 dx .
$$
\end{thm}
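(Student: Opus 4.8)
The plan is to read off Theorem~\ref{mix123} from Theorem~\ref{mix2} by specializing to the cocompact case, so that no new dynamical input is required; the entire task is to check that the exponential factor disappears and that the measure-theoretic constant on the right-hand side of Theorem~\ref{mix2} collapses to $\frac{1}{(2\pi\sigma)^{d/2}}$. Throughout I would fix $dx$ to be the lift to $\Gamma\ba G$ of the $G$-invariant probability measure on $\Gamma_0\ba G$, so that a fundamental domain for the $\z^d$-action has $dx$-volume $1$.

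First I would record that for a cocompact lattice $\Gamma_0$ the critical exponent equals the volume entropy, $\delta=D$. Indeed the limit set $\Lambda(\Gamma_0)$ is all of $\partial\tilde X$ and the Patterson--Sullivan density $\{\nu_x\}$ coincides, up to a scalar, with the $G$-invariant (visual) conformal density $\{m_x\}$ of dimension $D$; the same holds for $\Gamma$ since $\Lambda(\Gamma)=\Lambda(\Gamma_0)$ and $\delta_\Gamma=\delta_{\Gamma_0}$ by \cite{Ro1}. Hence $e^{(D-\delta)t}\equiv 1$, so the normalizing factor $t^{d/2}e^{(D-\delta)t}$ of Theorem~\ref{mix2} is exactly the $t^{d/2}$ appearing in Theorem~\ref{mix123}.

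Next I would unravel the three measures in Hopf coordinates. Writing $\nu_x=\kappa\,m_x$ for a constant $\kappa>0$ and denoting by $\mathsf{m}^{\Haar}$ the $G$-invariant measure in these coordinates, the standard formulas give, in the cocompact case, $\mathsf{m}^{\BMS}=\kappa^2\,\mathsf{m}^{\Haar}$ on $\Gamma_0\ba G$ and $\mathsf{m}^{\BR_+}=\mathsf{m}^{\BR_-}=\kappa\,\mathsf{m}^{\Haar}$ on $\Gamma\ba G$, since each Burger--Roblin measure trades one copy of $\nu$ for a copy of $m$ while the Bowen--Margulis--Sullivan measure carries two copies of $\nu$. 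Identifying $\mathsf{m}^{\Haar}$ with $dx$ on each quotient, the scalar $\kappa$ then cancels in the ratio on the right-hand side of Theorem~\ref{mix2}:
\begin{align*}
\frac{\mathsf{m}^{\BR_+}(\psi_1)\,\mathsf{m}^{\BR_-}(\psi_2)}{\mathsf{m}^{\BMS}(\Gamma_0\ba G)}
&=\frac{\kappa^2\,\big(\int\psi_1\,dx\big)\big(\int\psi_2\,dx\big)}{\kappa^2\,\vol(\Gamma_0\ba G)} \\
&=\frac{1}{\vol(\Gamma_0\ba G)}\int\psi_1\,dx\int\psi_2\,dx.
\end{align*}
With $dx$ normalized so that $\vol(\Gamma_0\ba G)=1$, this equals $\int\psi_1\,dx\int\psi_2\,dx$, and dividing by $(2\pi\sigma)^{d/2}$ reproduces precisely the constant claimed in Theorem~\ref{mix123}.

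The only delicate point is purely bookkeeping: one must verify that the single normalization of Haar measure used to define $\mathsf{m}^{\BMS}$ and the two Burger--Roblin measures is consistent with the $dx$ appearing on both $\Gamma\ba G$ and $\Gamma_0\ba G$, so that the scalar $\kappa$ genuinely cancels and the residual factor is exactly $\vol(\Gamma_0\ba G)^{-1}$. Once the normalizations are aligned there is no analytic obstacle, since all of the dynamical content --- in particular the local-limit-type $t^{d/2}$ scaling produced by the $\z^d$-cover --- is already contained in Theorem~\ref{mix2}.
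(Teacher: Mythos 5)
Your proposal is correct and follows essentially the same route as the paper, which obtains Theorem \ref{mix123} as the cocompact specialization of Theorem \ref{mix2} using Sullivan's equality $\delta=D$ and the fact that $\mathsf{m}^{\BMS}$, $\mathsf{m}^{\BR_+}$, $\mathsf{m}^{\BR_-}$ all become proportional to $\mathsf{m}^{\Haar}$, so that the constant collapses to $\bigl((2\pi\sigma)^{d/2}\,\mathsf{m}^{\Haar}(\Gamma_0\ba G)\bigr)^{-1}$. Your explicit tracking of the scalar $\kappa$ and of the normalization $\vol(\Gamma_0\ba G)=1$ (which the statement of Theorem \ref{mix123} leaves implicit, though it is needed since the two sides scale differently under $dx\mapsto\lambda\,dx$) is a careful spelling-out of exactly the bookkeeping the paper compresses into its remark after Theorem \ref{mix22}.
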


\begin {rmk}
\begin{enumerate}
\item Let $S_g$ be a compact hyperbolic surface of genus $g$ and $\Gamma_0<\PSL$ be a realization of
the surface group $\pi_1(S_g)$.
Then $\G:=[\Gamma_0, \Gamma_0]$ is a normal subgroup of $\Gamma_0$ with 
 $\z^{2g}$-quotient, and $\G\ba \bH^2$ is the homology cover of $S_g$. 
Theorem \ref{mix123} is  already new in this case.
\item 
For any $n\ge 2$, there is a congruence lattice
of $\SO(n,1)$ admitting co-abelian subgroups of infinite index  (\cite{Mi}, \cite{LM}, \cite{Lu}).
 Moreover, such a congruence lattice can be found
in any arithmetic subgroup of $\SO(n,1)$ if $n\ne 3,7$. 

\item  An infinite abelian cover of a compact quotient $\Gamma_0\ba G$
 may exist only when $G$ is $\op{SO}(n,1)$ or $\op{SU}(n,1)$, since  
other rank one groups have Kazhdan's property $T$, which forces the vanishing  of the first Betti number of any lattice in $G$.
 On the other hand, there are plenty of 
normal subgroups of a convex cocompact subgroup $\G_0$ of any $G$ with $\z^d$-quotients; for instance,
if $\Gamma_0$ is a Schottky group generated by $g$-elements, we have a $\z^{d}$-cover of $\Gamma_0\ba G$ for any $1
\le d \le g$.

\item All of our results can be generalized to any co-abelian subgroup $\Gamma$ of $\Gamma_0$
as there exists a co-finite subgroup $\Gamma_1<\Gamma_0$, which is necessarily convex cocompact
and such that $\Gamma\ba \Gamma_1$ is isomorphic to $\z^d$ for some $d\ge 0$.
 \end{enumerate}
\end{rmk}

\begin{rmk} \rm We expect Theorem \ref{mm1} to hold in a greater generality where the quotient
group $N:=\Gamma\ba \Gamma_0$ is a finitely generated nilpotent group, and the exponent $d$ 
is the polynomial growth of $N$ \cite{Bas}: for a finite generating set $S$ of $N$, there is $\beta>1$ such that
$\beta^{-1} n^d \le  \#S^n \le \beta n^d$ for all $n\ge 1$. 
\end{rmk}

\subsection{Ergodicity of the frame flow on abelian covers.}
We also establish the following ergodic property of the $A:=\{a_t\}$-action on $\Gamma\ba G$.

\begin{thm} \label{Ree}The  Bowen-Margulis-Sullivan measure $\mathsf m^{\BMS}$  on $\Gamma\ba G$
 is ergodic for the $A$-action if and only if $d\le 2$.
\end{thm}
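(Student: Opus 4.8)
The plan is to realize $(\Gamma\ba G,\mathsf m^{\BMS},A)$ as a $\z^d$-extension of the \emph{finite}-measure-preserving system $(\Gamma_0\ba G,\mathsf m_0^{\BMS},A)$, where $\mathsf m_0^{\BMS}$ is the (finite) Bowen--Margulis--Sullivan measure on the compact base, and then to reduce ergodicity to a recurrence question for this extension whose answer is dictated by a local limit theorem. The deck group $\z^d\simeq\Gamma\ba\Gamma_0$ acts on $\Gamma\ba G$ commuting with the $A$-action, the projection $\Gamma\ba G\to\Gamma_0\ba G$ is exactly the quotient by this action, and $\mathsf m^{\BMS}$ is the $\z^d$-invariant lift of $\mathsf m_0^{\BMS}$. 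Thus the $A$-flow upstairs is a $\z^d$-extension of the \emph{mixing} base system (mixing by \cite{Ba,Win}), described by a displacement cocycle $c\colon\R\times(\Gamma_0\ba G)\to\z^d$ recording the net deck translation along a geodesic segment.

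The analytic input is the companion local mixing theorem for the BMS measure, established by the same method as Theorem \ref{mix2}: for $f,g\in C_c(\Gamma\ba G)$,
$$\lim_{t\to+\infty} t^{d/2}\int_{\Gamma\ba G} f(xa_t)\,g(x)\,d\mathsf m^{\BMS}(x)=\frac{\mathsf m^{\BMS}(f)\,\mathsf m^{\BMS}(g)}{(2\pi\sigma)^{d/2}\,\mathsf m^{\BMS}(\Gamma_0\ba G)},$$
which is precisely a nondegenerate local central limit theorem for the cocycle $c$, with covariance $\sigma>0$ given in \eqref{ds}. Taking $f=g=\mathbf 1_K$ for a compact $K$ with $\mathsf m^{\BMS}(K)>0$ shows that the return correlation $\int \mathbf 1_K(xa_t)\mathbf 1_K(x)\,d\mathsf m^{\BMS}$ decays like $t^{-d/2}$, so that $\int_0^\infty$ of it diverges exactly when $d\le2$.

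From this I would first dispose of the \emph{only if} direction. When $d\ge3$ the return correlations are integrable in $t$, hence $\int_K\bigl(\int_0^\infty\mathbf 1_K(xa_t)\,dt\bigr)\,d\mathsf m^{\BMS}<\infty$; covering $\Gamma\ba G$ by countably many such $K$ gives $\int_0^\infty\mathbf 1_K(xa_t)\,dt<\infty$ for a.e.\ $x$ and every $K$, so almost every orbit is eventually wandering. The $A$-flow is then totally dissipative for $\mathsf m^{\BMS}$, and a nonatomic infinite-measure dissipative flow is never ergodic. For the \emph{if} direction ($d\le2$) the divergence of the integrated return correlations is the hypothesis of the dynamical recurrence criterion (Schmidt; Conze; Guivarc'h--Hardy; Aaronson): a cocycle over a mixing base satisfying a nondegenerate local CLT is recurrent, i.e.\ the extension is conservative, iff the return-probability series $\sum_n n^{-d/2}$ diverges. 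Granting conservativity, ergodicity follows from the essential-value machinery for abelian extensions: over the ergodic base the extension is ergodic iff the group of essential values of $c$ equals $\z^d$, and nondegeneracy of the covariance $\sigma$ forces this group to be full. The nondegeneracy in turn comes from Zariski density of $\Gamma_0$, which prevents $c$ from being cohomologous to a cocycle valued in a proper closed subgroup of $\z^d$.

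I expect the whole difficulty to sit in the $d\le2$ case, in two linked places. First, passing from the \emph{mean} statement furnished by local mixing --- divergence of $\int_0^\infty\langle\mathbf 1_K\circ a_t,\mathbf 1_K\rangle\,dt$ --- to the \emph{almost everywhere} recurrence required for conservativity: the $L^1$-divergence of occupation times must be upgraded to a pointwise $=\infty$, which I anticipate needs a matching second-moment (variance) bound on occupation times to run a Paley--Zygmund/Borel--Cantelli argument, or an appeal to the Hopf ratio ergodic theorem together with mixing of the base. Second, verifying that the essential-value group is all of $\z^d$ rather than a proper lower-rank or finite-index subgroup; this is where positive-definiteness of $\sigma$, and hence Zariski density of $\Gamma_0$, is essential, and it is the dynamical analogue of the aperiodicity hypothesis in the classical local limit theorem.
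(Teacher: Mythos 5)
Your framing is legitimately different from the paper's. The paper obtains Theorem \ref{Ree} as a special case of Corollary \ref{twist}(2), in two steps: first it quotes the classical Rees--Yue dichotomy (\cite{Re}, \cite{Yu}) for the geodesic flow on the cover at the level of $\Gamma\ba G/M$ (ergodic and conservative iff $d\le 2$), and second -- this is the paper's actual new content -- it upgrades $A$-ergodicity from $\Gamma\ba G/M$ to the compact $M$-extension $\Gamma\ba G$ via Theorem \ref{corr}, using Winter's density of the transitivity group (Lemma \ref{win}) and a Hopf ratio-ergodic-theorem argument. You instead fold $M$ into the base, viewing $(\Gamma\ba G,\mathsf m^{\BMS},A)$ as a $\z^d$-extension of the mixing frame flow on $\Gamma_0\ba G$, and propose to run infinite ergodic theory (recurrence plus essential values) off the local mixing Theorem \ref{m1}. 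There is no circularity in that -- the proof of Theorem \ref{m1} only uses $A$-ergodicity on the compact base $\Gamma_0\ba G$, due to Winter -- but note that you are thereby undertaking to \emph{reprove} the Rees--Yue theorem rather than cite it, and that is where the real work sits.

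Three steps of your sketch are genuinely gapped. (a) The assertion that ``nondegeneracy of the covariance $\sigma$ forces the essential-value group to be full'' is false as stated: a $\z$-valued cocycle taking values in $2\z$ can have nondegenerate variance, yet its essential values lie in $2\z$ and the extension is nonergodic. What is actually needed is aperiodicity of the Frobenius cocycle $f$ -- for $v\ne 0$, $\langle v,f\rangle$ is not cohomologous to a constant modulo $2\pi\z$ -- and in the paper this is precisely the content of Theorem \ref{sp2} together with Lemma \ref{sha}, resting on the surjectivity of $f$ onto $\z^d$ and the non-arithmeticity of the length spectrum \cite{Ki}, not on Zariski density per se (Zariski density enters for the $M$-direction, via \cite{PR} and \cite{Win}). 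With aperiodicity and the local limit theorem in hand one can appeal to Aaronson--Denker/Schmidt-type results, but that step must actually be carried out. (b) For conservativity when $d\le 2$, you correctly flag that divergence of the integrated correlations $\int_0^\infty\langle a_t\mathbf 1_K,\mathbf 1_K\rangle\,dt$ does not by itself give a.e.\ recurrence; the second-moment/Paley--Zygmund occupation-time estimates you ``anticipate'' are exactly the hard core of Rees's original argument, so at present your plan assumes what it must prove. (c) The claim that ``a nonatomic infinite-measure dissipative flow is never ergodic'' is false: the translation flow on $(\R,\op{Leb})$ is totally dissipative and ergodic. To dispose of $d\ge 3$ you either need that the orbit-space measure is nonatomic here (the BMS measure is not carried by a single $A$-orbit since $\Gamma$ is nonelementary), or, as the paper does, the Hopf--Sullivan dichotomy of \cite{PPS} quoted in Section 5, which says the system is either conservative-ergodic or totally dissipative-nonergodic; alternatively, non-ergodicity for $d\ge 3$ lifts immediately from the factor $\Gamma\ba G/M$ via \cite{Re}, \cite{Yu}.
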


This strengthens the previous works of Rees \cite{Re} and Yue 
\cite{Yu} on the equivalence of the 
ergodicity of geodesic flow and the condition $d\le 2$

\subsection{Measure classification for a horospherical subgroup action.}
Let  $N$ denote the contracting horospherical
subgroup of $G$:
$$N=\{g\in G: a_{-t} ga_t\to e\text{ as $t\to +\infty$}\}.$$ 
Let $M$ be the compact subgroup which is the centralizer of  $A$, so that $G/M$
is isomorphic to the unit tangent bundle $\T^1(\tilde X)$.

Let $(\Gamma\ba \Gamma_0)^*$ denote the group of characters of the abelian group $\Gamma\ba \Gamma_0\simeq \z^d$.
 Babillot and Ledrappier constructed a family $\{\mathsf m_\chi: \chi \in (\Gamma\ba \Gamma_0)^*\}$
of $NM$-invariant Radon measures on $\Gamma \ba G$ (\cite{Ba1}, \cite{BL}); see Def. \ref{bab} for a precise definition.
Each $\mathsf m_\chi$ is known to be $NM$-ergodic (\cite{BL}, \cite{Po2}, \cite{Co}).

 We show that $\mathsf m_\chi$ is $N$-ergodic and deduce the following theorem from the classification
 result of $NM$-ergodic invariant measures due to
 Sarig  and Ledrappier  (\cite{Sa}, \cite{Le}):

 \begin{thm} \label{horo}Let $\Gamma_0<G$ be cocompact and $\Gamma\ba \Gamma_0\simeq \z^d$.
 Any $N$-invariant ergodic Radon measure on $\Gamma \ba G$
is proportional to $\mathsf m_\chi$ for some $\chi \in (\Gamma\ba \Gamma_0)^*$.
\end{thm}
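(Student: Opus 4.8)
The plan is to prove Theorem~\ref{horo} by reducing the classification of $N$-invariant ergodic Radon measures to the already-established classification of $NM$-invariant ergodic measures. The strategy proceeds in two conceptually distinct stages. First I would promote each $N$-invariant ergodic measure to an $NM$-invariant one; second I would invoke the Sarig--Ledrappier classification to identify it, and finally descend back to the level of $N$ by showing the $\mathsf m_\chi$ remain ergodic under $N$ alone.

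More concretely, let $\nu$ be an $N$-invariant ergodic Radon measure on $\Gamma\ba G$. The group $M$ normalizes $N$ (since $M$ centralizes $A$ and $M$ preserves the horospherical foliation), so for each $m\in M$ the pushforward $m_*\nu$ is again $N$-invariant and ergodic. This gives an action of the compact group $M$ on the space of $N$-ergodic measures, and I would form the averaged measure $\tilde\nu:=\int_M m_*\nu\,dm$, which is manifestly $NM$-invariant. The key point is that $\nu$ is a component in the ergodic decomposition of $\tilde\nu$ under $N$, and since $\tilde\nu$ is $NM$-invariant one can decompose it into $NM$-ergodic pieces. By the cocompactness hypothesis on $\Gamma_0$ together with the Sarig--Ledrappier result, each such $NM$-ergodic piece is proportional to some $\mathsf m_\chi$. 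The heart of the argument is then the claim that each $\mathsf m_\chi$ is already \emph{$N$-ergodic}; granting this, the $NM$-ergodic decomposition coincides with the $N$-ergodic decomposition on the relevant support, and one reads off that $\nu$ itself must be proportional to a single $\mathsf m_\chi$.

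For the $N$-ergodicity of $\mathsf m_\chi$, the idea is that $M$ acts by measure-scaling on the family $\{\mathsf m_\chi\}$ in a controlled way dictated by the character $\chi$, and the rigidity of this action prevents $\mathsf m_\chi$ from decomposing into distinct $N$-ergodic components permuted by $M$. Concretely, because $M$ is connected (in the relevant rank one groups, after accounting for $\pi_0$) and the set of characters is discrete, $M$ cannot move one $\mathsf m_\chi$ to a genuinely different $N$-ergodic measure; an $N$-ergodic component of $\mathsf m_\chi$ would have to be $M$-invariant up to the character twist, forcing it to be all of $\mathsf m_\chi$. This is where I would use the explicit Babillot--Ledrappier description of $\mathsf m_\chi$ in terms of the Patterson--Sullivan density twisted by $\chi$ (Def.~\ref{bab}), checking that the conditional measures along $N$-orbits determine the global measure.

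The main obstacle, I expect, is precisely the passage from $NM$-ergodicity to $N$-ergodicity of $\mathsf m_\chi$: the cited results of Babillot--Ledrappier, Pollicott and Coud\`ene give $NM$-ergodicity, but deducing $N$-ergodicity is not formal, since in principle an $NM$-ergodic measure can split into several $N$-ergodic pieces that $M$ permutes transitively. One must rule this out using the specific structure of the deck group $\z^d$ and the way $M$ interacts with the cover. The cocompactness of $\Gamma_0$ is used essentially here, both to guarantee that the Sarig--Ledrappier hypotheses (which require the base dynamics to be nice) are met and to control the conditional measures; without it the classification could fail or acquire extra exotic measures. I would therefore isolate the $N$-ergodicity statement as a standalone lemma and treat the reduction to $NM$ as the routine part built on top of it.
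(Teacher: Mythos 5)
Your outer reduction is exactly the paper's proof of Theorem \ref{horo}: form the average $\nu^\clubsuit:=\int_M m_*\nu\, dm$, observe it is $NM$-invariant and ergodic (any $NM$-invariant set is in particular $N$-invariant, hence $\nu$-null or conull, and $M$-invariance then makes its $\nu^\clubsuit$-measure $0$ or full), identify $\nu^\clubsuit$ with some $\mathsf m_\chi$ by Sarig--Ledrappier, and then use $N$-ergodicity of $\mathsf m_\chi$ (the paper's Theorem \ref{eee}) to conclude that $m_*\nu$ is proportional to $\mathsf m_\chi$ for almost every $m\in M$, whence $\nu\propto (m^{-1})_*\mathsf m_\chi=\mathsf m_\chi$ by $M$-invariance of $\mathsf m_\chi$. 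You also correctly isolate the $N$-ergodicity of $\mathsf m_\chi$ as the standalone key lemma.

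However, your proposed proof of that key lemma has a genuine gap. The argument ``$M$ is connected and the character group is discrete, so an $N$-ergodic component would have to be $M$-invariant up to a character twist'' tacitly assumes that the $N$-ergodic components of $\mathsf m_\chi$ again belong to the Babillot--Ledrappier family, but nothing forces this: the classification you invoke applies only to $NM$-invariant measures. If $\sigma$ is an $N$-ergodic component of $\mathsf m_\chi$, all one knows is $\mathsf m_\chi=\int_{M}m_*\sigma\, dm$, and $\{m_*\sigma: m\in M\}$ can perfectly well be a \emph{continuous} family of mutually singular $N$-ergodic measures parametrized by $M/\operatorname{Stab}(\sigma)$; connectedness of $M$ is no obstruction to this (it is exactly what happens, e.g., when an $NM$-ergodic measure disintegrates over a circle of $N$-ergodic leafwise measures), and no character-twist rigidity appears because each $m_*\sigma$ is genuinely $N$-invariant rather than quasi-invariant with a multiplier. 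Ruling out this scenario requires real dynamical input, and this is where the bulk of the paper's Sections 5--6 goes: $\mathsf m_\chi$ has the same transverse measure as the generalized BMS measure $\mathsf m_\Gamma$ built from the twisted densities $\{\mu_{\chi,x}\}$ and $\{\mu_{-\chi,x}\}$, so its $N^-$-ergodicity is equivalent to that of $\mathsf m_\Gamma$, which is Theorem \ref{ner}. That theorem rests on Proposition \ref{mainpp} (every $N^-$-invariant function is invariant under all of $\Gamma\ba\Gamma_0\times A\times M$), whose proof needs the density of the group generated by the generalized length spectrum $\mathcal{GL}(\Gamma_0,\Gamma)$ (Corollary \ref{cowbell}), obtained from the closing lemma together with topological mixing of the frame flow on $\Gamma\ba G$ --- the point at which the local mixing Theorem \ref{mix2} enters --- plus the $A$-ergodicity of $\mathsf m_{\Gamma_0}$ on $\Gamma_0\ba G$ (Corollary \ref{twist}, via Winter's density of the transitivity group). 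None of this machinery is replaceable by the discreteness-of-characters argument you sketch. A smaller inaccuracy: cocompactness of $\Gamma_0$ is used only in the Sarig--Ledrappier identification step; the $N$-ergodicity of $\mathsf m_\chi$ already holds for $\Gamma_0$ convex cocompact.
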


\subsection{  Prime geodesic theorems and holonomies.}
For $T>0$, let $\PP_T$
be the collection of all primitive closed geodesics in $\T^1(X)$  of length at most $T$.
To each closed geodesic $C$,
we can associate a conjugacy class $h_C$ in $M$, called  the holonomy class of $C$.

We normalize $\mathsf{m}^{\BMS}$ so that $\mathsf{m}^{\BMS}(\Gamma_0\ba G)=1$.
We write $f(T)\sim g(T)$ if $\lim_{T\to \infty} f(T)/g(T)=1$.
\begin{thm}\label{mp1} 
Let $\Omega\subset \T^1(X)$ be a compact subset with 
BMS-negligible boundary and  $\xi \in C(M)$  a class function.
Then as $T\to \infty$,
$$\sum_{C\in \PP_T}\frac{\ell (C\cap \Omega)}{\ell (C)}\xi (h_C  ) \sim \frac{e^{\delta T}}{(2\pi \sigma)^{d/2}\delta T^{d/2+1}} \mathsf{m}^{\BMS}(\Omega) \int_M \xi d{m} $$
where  $d{m}$
is the probability Haar measure on $M$.
\end{thm}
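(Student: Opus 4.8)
The plan is to derive Theorem~\ref{mp1} from the local mixing of the frame flow --- specifically from the Bowen--Margulis--Sullivan counterpart of Theorem~\ref{mix2} which underlies it (in the cocompact case all of $\mathsf m^{\BMS}$, $\mathsf m^{\BR_+}$, $\mathsf m^{\BR_-}$ are proportional to $dx$, so this is literally Theorem~\ref{mix2}) --- by the dynamical counting method of Margulis, as developed by Roblin, refined to carry the holonomy weight. First I would record the dictionary: a primitive $C\in\PP_T$ meeting $\Omega$ corresponds to a primitive loxodromic conjugacy class $[\g]$ in $\Gamma$ with $\ell(\g)\le T$ whose periodic $A$-orbit in $\Gamma\ba G/M$ meets the lift of $\Omega$. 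Choosing $x_\g\in\Gamma\ba G$ on this orbit, the relation $x_\g a_{\ell(\g)}=x_\g m_\g^{-1}$ holds in $\Gamma\ba G$, where the $M$-element $m_\g$ closing up the $a_t$-orbit is precisely the holonomy: $h_C$ is the $M$-conjugacy class of $m_\g$. Thus $\sum_{C}\xi(h_C)$ becomes a count of periodic $A$-orbits through $\Omega$, weighted by $\xi$ evaluated on the $M$-defect $m_\g$.

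The core step relates this count to the mixing integrals. Following Margulis I would thicken each periodic orbit into a flow box built from small balls of radius $\e$ in the expanding and contracting horospherical directions $N^{+}$, $N^{-}$ and a short flow interval, producing $\Psi_1,\Psi_2\in C_c(\Gamma\ba G)$. Because closing the $a_t$-orbit requires the rotation $m_\g$, the holonomy weight is recovered by inserting an $M$-integration: the relevant quantity is
$$\int_M \xi(m)\left(\int_{\Gamma\ba G}\Psi_1(x)\,\Psi_2(xa_t m)\,dx\right)dm,$$
summed over flow boxes covering $\Omega$ and over the length parameter $t$. A thickened orbit of length near $t$ contributes to the inner integral exactly when $m\approx m_\g$, up to an $O(\e)$ distortion, so this faithfully encodes $\sum_C\xi(h_C)$ for geodesics of length near $t$ through $\Omega$.

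Now apply local mixing to $\Psi_2(\,\cdot\, m)$. Its leading coefficient involves $\mathsf m^{\BR_-}(\Psi_2(\,\cdot\, m))$ (resp.\ $\mathsf m^{\BMS}(\Psi_2(\,\cdot\, m))$ in the BMS formulation); since the Burger--Roblin and Bowen--Margulis--Sullivan measures are right $M$-invariant, this is independent of $m$, so the $M$-integration produces the factor $\int_M \xi\,dm$. Equivalently, by Peter--Weyl only the trivial $M$-type survives to leading order, which is the equidistribution of holonomies in $M$ toward Haar measure. As $\e\to0$, the local product structure of $\mathsf m^{\BMS}$ into Patterson--Sullivan measures on $N^\pm$ and Lebesgue along $A$ turns the thickened masses, divided by the transversal normalisation, into $\mathsf m^{\BMS}(\Omega)$.

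Finally, Margulis's method converts the local mixing asymptotic into the orbit count: the exponential rate $e^{\delta t}$ is the entropy of $\mathsf m^{\BMS}$, while the correction $(2\pi\sigma)^{-d/2}t^{-d/2}$ is inherited from Theorem~\ref{mix2}. Concretely, the length-weighted density of holonomy-weighted geodesics through $\Omega$ at length $t$ is $\sim \frac{e^{\delta t}}{(2\pi\sigma)^{d/2}t^{d/2}}\,\mathsf m^{\BMS}(\Omega)\int_M\xi\,dm$; integrating this density against $t^{-1}\,dt$ up to $T$ (partial summation from the length weight to the primitive count) gives, by Laplace's method, the stated $\frac{e^{\delta T}}{(2\pi\sigma)^{d/2}\,\delta\, T^{d/2+1}}\,\mathsf m^{\BMS}(\Omega)\int_M\xi\,dm$. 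The main obstacle is uniformity: the local mixing is a limit without an error rate, so one must establish it uniformly over the normalised family $\{\Psi_i\}$ as $\e\to0$ and uniformly in $m\in M$, in order to justify interchanging $t\to\infty$ with $\e\to0$ and with the $M$-integration, while controlling the $O(\e)$ thickening distortion and the non-primitive orbits uniformly in $T$. This is compounded by the slowly varying factor $t^{d/2}$, which forces a regularly varying (Karamata-type) Tauberian argument in place of the classical one.
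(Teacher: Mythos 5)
Your proposal is correct and follows essentially the same route as the paper: the paper deduces Theorem~\ref{mp1} from the BMS-correlation asymptotic (Theorem~\ref{m1}, the BMS counterpart of Theorem~\ref{mix2}) by invoking verbatim the Margulis-type flow-box argument with holonomy of \cite{MMO} (Theorem 5.1 there), first obtaining the joint equidistribution statement (Theorem~\ref{eqt} for the length-weighted measures $\mu_T$ and $\eta_T$) and then passing to the primitive count by exactly the partial-summation/Laplace step you describe. Your sketch --- flow boxes in $N^{\pm}\times A$, the inserted $M$-integration exploiting right $M$-invariance of $\mathsf m^{\BMS}$ to extract $\int_M \xi\, dm$, the local product structure yielding $\mathsf m^{\BMS}(\Omega)$, and the Tauberian conversion of $e^{\delta t}t^{-d/2}$ into $e^{\delta T}/(\delta T^{d/2+1})$ --- is a faithful reconstruction of that argument, including the uniformity-in-$\e$ issues that \cite{MMO} handles.
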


For $d=0$, the above theorem was proved earlier in  \cite{MMO} (also \cite{SW} for $\Gamma$ lattice). Indeed, 
given Theorem \ref{mix2}, 
the proof of \cite{MMO} applies in the same way.
Another formulation of the prime geodesic theorem in this setting
would be studying the distribution of closed geodesics  in $\T^1(X_0)$ satisfying 
some homological constraints. An explicit main term in this setting was first described by Phillips and Sarnak \cite{PS2}.
See also (\cite{KS1}, \cite{KS2}, \cite{La2}, \cite{PP}, \cite{An}) for subsequent works.

\subsection{ Distribution of a discrete $\G$-orbit in $H\ba G$.}
Let $H$ be either the trivial, a horospherical or  a symmetric subgroup of $G$  (that is, $H$ is the group
of  fixed points
 of an involution of $G$).
Another application of the local mixing result can be found
 in the study of the distribution of a discrete $\Gamma$-orbit on
 the quotient space $H\ba G$.  
  That this question can be approached by
  a mixing type result has been understood first by Margulis \cite{Mar} at least when $H$ is compact. It was  further developed by
  Duke, Rudnick and Sarnak \cite{DRS}, and Eskin and McMullen \cite{EM} when $\Gamma$ is a lattice.  See \cite{Ro}, \cite{OS}, \cite{MO}, \cite{Oh} for generalizations to geometrically finite
  groups $\Gamma$. The following theorem extends especially the works of \cite{OS} and \cite{MO}
to geometrically infinite groups which are co-abelian subgroups of convex cocompact groups.
 We give an explicit formula for a
 Borel measure $\mathcal M=\mathcal M_\Gamma $ (see Def. \ref{MD}) on $H\ba G$ for which the following holds:
 
  \begin{thm} \label{mcc} 
Suppose that $[e]\Gamma_0\subset H\ba G$ is discrete and that
$[H\cap \Gamma_0: H\cap \Gamma]<\infty$. If $B_T$ is a well-rounded sequence of compact subsets in $H\ba G$ with respect to 
 $\mathcal M$ (see Def. \ref{MDW}),
 then
 $$\# [e]\Gamma \cap B_T \sim \mathcal M(B_T) .$$
  \end{thm}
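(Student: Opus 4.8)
The plan is to reduce the counting problem to the local mixing theorem (Theorem \ref{mix2}) via the standard mixing-to-counting mechanism, adapted to the infinite-volume, abelian-cover setting. The key observation is that the counting function $\#[e]\Gamma\cap B_T$ can be expressed as a sum over the discrete orbit, which unfolds into an integral against matrix coefficients of the $a_t$-flow once we choose suitable bump functions.

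Let me sketch the approach in the classical framework of Eskin--McMullen \cite{EM} and its geometrically finite extensions \cite{OS, MO}. First I would fix a small $\epsilon>0$ and a nonnegative bump function $\psi_\epsilon\in C_c(\Gamma\ba G)$ supported on an $\epsilon$-neighborhood of the identity coset, normalized appropriately with respect to the Haar measure $dx$. For a compact set $B_T\subset H\ba G$, I form the smoothed counting function
\begin{equation}
F_T(g)=\sum_{\gamma\in (H\cap\Gamma)\ba \Gamma}\chi_{B_T}([e]\gamma g),
\end{equation}
which descends to $\Gamma\ba G$ and whose value near the identity approximates $\#[e]\Gamma\cap B_T$. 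The main computation is to evaluate the inner product $\langle F_T,\psi_\epsilon\rangle_{L^2(\Gamma\ba G)}$ in two ways: on one hand it is close to $\#[e]\Gamma\cap B_T$ up to an error controlled by the modulus of continuity of $B_T$ (this is exactly where the well-roundedness hypothesis of Def. \ref{MDW} enters); on the other hand, unfolding the sum and using a Cartan-type decomposition of $B_T$ along the $A$-direction reduces the inner product to an integral of matrix coefficients $\int \psi_1(xa_t)\psi_2(x)\,dx$ over the relevant range of $t$.

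The heart of the argument is then to feed in the precise asymptotic from Theorem \ref{mix2}: the matrix coefficient decays like $t^{-d/2}e^{-(D-\delta)t}$ with the explicit constant involving $\mathsf{m}^{\BR_+}$, $\mathsf{m}^{\BR_-}$, and $\mathsf{m}^{\BMS}$. Integrating this asymptotic against the volume growth of $B_T$ in the $a_t$-parameter produces the measure $\mathcal{M}(B_T)$ of Def. \ref{MD}, with the Burger-Roblin measures naturally accounting for the $H$-invariant and $\Gamma$-invariant transverse directions. The well-roundedness of $\{B_T\}$ guarantees that the boundary contributions are negligible, so the approximation errors from the bump function disappear in the limit as we send $T\to\infty$ and then $\epsilon\to 0$.

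\textbf{The main obstacle} I anticipate is the uniformity of the matrix-coefficient asymptotic across the full range of translates needed to sweep out $B_T$, rather than for a single pair of fixed test functions. Theorem \ref{mix2} gives the limit for fixed $\psi_1,\psi_2$, but the counting argument requires applying it to a family of functions whose supports shift as $t$ grows, and one must control the error terms uniformly over this family and integrate them against the $a_t$-parameter without the decay being swamped. In the finite-volume case this is handled by a dominated-convergence argument once one has uniform boundedness of the pre-limit matrix coefficients; here the subexponential factor $t^{d/2}$ and the infinite-volume geometry make the uniform domination more delicate. I would address this by establishing the requisite uniform upper bound for the matrix coefficients (of the same order $t^{-d/2}e^{-(D-\delta)t}$) as a companion estimate to Theorem \ref{mix2}, which then justifies passing the limit inside the integral over the Cartan parameter and yields the stated equivalence $\#[e]\Gamma\cap B_T\sim \mathcal{M}(B_T)$.
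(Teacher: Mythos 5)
Your proposal follows essentially the same route as the paper: the paper deduces Theorem \ref{mcc} by exactly the smoothing--unfolding mechanism of \cite{EM}, \cite{OS}, \cite{MO} that you describe, with the translate asymptotics packaged as the equidistribution Theorems \ref{eq} and \ref{op} (themselves obtained from the local mixing Theorems \ref{m1} and \ref{mix2} via the wavefront property and the thickening \eqref{tt}), and with well-roundedness absorbing the smoothing errors just as in your Step involving $\psi_\epsilon$. The only presentational difference is that you inline that intermediate equidistribution step; note that the hypothesis $[H\cap \Gamma_0: H\cap \Gamma]<\infty$ enters precisely there, guaranteeing that the leafwise PS measure $\mu_{\Gamma,H}^{\PS}$ is compactly supported, so that the period integral over the full closed orbit $[e]H$ has the finite limit $\frac{|\mu_{\Gamma,H}^{\PS}|}{(2\pi\sigma)^{d/2}}\,\mathsf{m}^{\BR_+}(\psi)$ and the uniformity issues you flag can be handled on a compact family of test functions.
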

In the case where $H$ is compact, $\Gamma_0$ is cocompact, and 
 $B_T$ is the Riemannian ball in $G/K=\tilde X$, 
 this result implies that for any $o\in \tilde X$,
 $$\#\{\gamma (o)\in \Gamma (o): d(\gamma(o), o) <T\} \sim c
 \frac{e^{D T}}{T^{d/2}} .$$ This was earlier obtained by Pollitcott and Sharp
 \cite{PS} (also see \cite{Ep}). The novelty  of Theorem \ref{mcc} lies in the treatment 
 the homogeneous space $H\ba G$ with non-compact $H$ and of sequence $B_T$ of very general shape
 (e.g. sectors).

We present a concrete example:
let $Q=Q(x_1, \cdots, x_{n+1})$ be a quadratic form of signature $(n,1)$ for $n\ge 2$, and let 
$G=\op{SO}(Q)$ be the special orthogonal group
preserving $Q$.  Let $\G$ be a normal subgroup of a Zariski dense convex cocompact subgroup $\Gamma_0$ with $\z^d$-quotient.

 \begin{cor}
 Let $w_0\in \br^{n+1}$ be a non-zero vector such that $w_0\Gamma_0$ is discrete, and 
 $[\op{Stab}_{\Gamma_0}(w_0):\op{Stab}_{\Gamma}(w_0)]<\infty$. Then for any norm $\| \cdot \|$ on $\br^{n+1}$,
 $$ \#\{ v\in w_0\Gamma  : \|v\|\le T \} \sim c \frac{T^{\delta }}{(\log T)^{d/2}}$$
where $c>0$ depends only on $\Gamma$ and $\|\cdot\|$.
 \end{cor}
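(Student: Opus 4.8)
The plan is to realize the orbit $w_0\Gamma$ as a $\Gamma$-orbit in a homogeneous space $H\backslash G$ and then invoke Theorem \ref{mcc}. Since $w_0\neq 0$ and $w_0\Gamma_0$ is discrete we have $Q(w_0)\neq 0$, so the orthogonal reflection fixing $w_0$ and acting by $-1$ on $w_0^{\perp}$ is an involution of $(\br^{n+1},Q)$; conjugation by it is an involution of $G=\SO(Q)$ whose fixed-point group is $H:=\mathrm{Stab}_G(w_0)\cong\SO(Q|_{w_0^{\perp}})$. Thus $H$ is a symmetric subgroup of $G$ (compact precisely when $Q|_{w_0^{\perp}}$ is definite), and Theorem \ref{mcc} applies to this $H$. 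The orbit map $Hg\mapsto w_0 g$ identifies $H\backslash G$ with the quadric $w_0 G$ and carries $[e]\Gamma$ bijectively onto $w_0\Gamma$; moreover $H\cap\Gamma=\mathrm{Stab}_\Gamma(w_0)$ and $H\cap\Gamma_0=\mathrm{Stab}_{\Gamma_0}(w_0)$, so the hypotheses of the corollary are exactly the discreteness of $[e]\Gamma_0$ and the finiteness of $[H\cap\Gamma_0:H\cap\Gamma]$ required by Theorem \ref{mcc}. I then set
\[
B_T:=\{\,Hg\in H\backslash G:\ \norm{w_0 g}\le T\,\},
\]
which is well defined because $\norm{w_0 g}$ is constant on $H$-cosets, and under the above identification $\#([e]\Gamma\cap B_T)=\#\{v\in w_0\Gamma:\norm{v}\le T\}$.

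By Theorem \ref{mcc} it then suffices to check that $\{B_T\}$ is well-rounded with respect to $\mathcal M$ (Def. \ref{MDW}) and to compute the asymptotic of $\mathcal M(B_T)$. For well-roundedness I would use that $g\mapsto\norm{w_0 g}$ is continuous and grows monotonically in the expanding $A^{+}$-direction, so that the dilated sets $B_{(1+\e)T}$ and $B_{(1-\e)T}$ sandwich the $\e$-thickenings of $B_T$ with multiplicative error tending to $1$ as $\e\to 0$, and the boundary hypersurfaces $\{\norm{w_0 g}=T\}$ are $\mathcal M$-negligible since $\mathcal M$ (Def. \ref{MD}) is absolutely continuous along the flow direction with the Patterson--Sullivan/Burger--Roblin densities transverse to it. This argument works for any norm, so only the constant, not the exponent, depends on $\norm{\cdot}$.

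To compute $\mathcal M(B_T)$ I would pass to a Cartan-type decomposition $g=k_1\,a_s\,k_2$ adapted to the standard representation of $G$ on $\br^{n+1}$, in which $a_s$ acts with top eigenvalue $e^{s}$ (a boost in one hyperbolic plane, trivial elsewhere); hence $\norm{w_0 g}\asymp e^{s}$ and $\norm{w_0 g}\le T$ becomes $s\le\log T+O(1)$, the $O(1)$ depending only on $k_1,k_2$ and the norm. Since $s$ is precisely the geodesic-time parameter of $\{a_t\}$, the measure $\mathcal M$, built from the local mixing limit, disintegrates so that the mass of the time-$s$ slab grows like $e^{\delta s}\,s^{-d/2}$, the factor $e^{\delta s}$ coming from the Patterson--Sullivan density of critical exponent $\delta$ and $s^{-d/2}$ being the diffusion correction of Theorem \ref{mm1}. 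Integrating,
\[
\mathcal M(B_T)\ \sim\ c\int^{\log T} e^{\delta s}\,s^{-d/2}\,ds\ \sim\ \frac{c}{\delta}\,\frac{T^{\delta}}{(\log T)^{d/2}},
\]
which yields the claimed asymptotic with $c>0$ depending only on $\Gamma$ and $\norm{\cdot}$.

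The main obstacle is twofold. First, the well-roundedness verification is genuinely harder than in the lattice case because $\mathcal M$ is weighted by the singular Patterson--Sullivan and Burger--Roblin measures together with the Gaussian diffusion factor; controlling the boundary contribution and the uniformity of the sandwiching as $k_1$ varies over the fractal support of these measures is the technical heart of the argument. Second, one must transfer the slowly varying renormalization $t^{d/2}e^{(D-\delta)t}$ of Theorem \ref{mm1} correctly through the integral over $s\approx\log T$, ensuring that the power $(\log T)^{d/2}$ emerges with the right constant; this reduces to showing that the diffusion factor $s^{-d/2}$ is asymptotically constant across the $O(1)$-width slab near $s=\log T$, which holds because $d/2$ is fixed while $\log T\to\infty$.
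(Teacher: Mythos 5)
Your overall strategy --- identify $w_0G$ with $H\ba G$ for $H=\op{Stab}_G(w_0)$, apply Theorem \ref{mcc} to the norm balls $B_T$, and extract $\mathcal M(B_T)\sim c\,T^{\delta}/(\log T)^{d/2}$ from a slab computation in the $t$-variable --- is exactly the paper's route. But there is a genuine gap at the very first step: your claim that $w_0\ne 0$ together with discreteness of $w_0\Gamma_0$ forces $Q(w_0)\neq 0$ is false, and the isotropic case is part of the statement. For a convex cocompact, non-cocompact $\Gamma_0$ (e.g.\ a Schottky group), take a null vector $w_0$ whose corresponding boundary point lies outside the limit set $\Lambda(\Gamma_0)$; then $w_0\Gamma_0$ is discrete in $\br^{n+1}$, yet $Q(w_0)=0$. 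In that case $\op{Stab}_G(w_0)=MN$, which is horospherical, not symmetric: your reflection construction collapses because $w_0\in w_0^{\perp}$, so $\br^{n+1}\neq \br w_0\oplus w_0^{\perp}$ and $Q|_{w_0^{\perp}}$ is degenerate. This is precisely why Theorem \ref{mcc} (and the underlying equidistribution Theorem \ref{op}) is formulated for $H$ \emph{horospherical or symmetric}: the paper's own deduction runs through the trichotomy $H\cong \op{SO}(n-1,1)$, $\op{SO}(n)$, or $MN$ according as $Q(w_0)>0$, $Q(w_0)<0$, or $Q(w_0)=0$. To repair your argument you must add the case $H=MN$, using Definition \ref{MD}(2) with $|\mu^{\PS}_{\Gamma,H}|$ the Patterson--Sullivan mass of the closed horospherical orbit; the slab computation then goes through in the same form, since $a_t$ scales the null vector $w_0$ by $e^{t}$, so $\|w_0 a_t k\|\le T$ again reads $t\le \log T+O(1)$.

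Apart from this omission, the remainder of your outline agrees with the paper's intended verification: the well-roundedness of norm balls under Definition \ref{MDW} via sandwiching $B_{(1\pm\e)T}$, and $\mathcal M(B_T)\sim c\int^{\log T}e^{\delta s}s^{-d/2}\,ds\sim \tfrac{c}{\delta}T^{\delta}(\log T)^{-d/2}$. One consistency remark in your favor: Definition \ref{MD} as printed carries the weight $e^{\delta t}\,t^{d/2}$, which would produce $T^{\delta}(\log T)^{+d/2}$ and contradict both the Corollary and Theorem \ref{op} (whose correlation decay $t^{-d/2}e^{-(D-\delta)t}$ forces the density $e^{\delta t}t^{-d/2}$ you used); the exponent sign in Definition \ref{MD} is evidently a typo, and your $s^{-d/2}$ is the correct density.
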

 In this case, the $G$-orbit $w_0G$ is isomorphic to $H\ba G$ where $H$ is either $\op{SO}(n-1,1)$, $\op{SO}(n)$
 or $MN$
  according as $Q(w)>0$, $Q(w)<0$, or $Q(w)=0$. Under this isomorphism, the norm balls give rise to a well-rounded family of compact subsets, say $B_T$
  and the explicit computation of the $\mathcal M$-measure of $B_T\subset H\ba G$ gives the above asymptotic.

 \subsection{ Discussion of the proof.}
 The proof of Theorem \ref{mix2} is based on extending
the symbolic dynamics approach of studying the geodesic flow on $\T^1(X_0)$ as the suspension flow
on $\Sigma\times \br/\sim$ for a subshift of finite type $(\Sigma,\sigma)$.
The $a_t$ flow on a $\z^d$-cover $\Gamma\ba G$ can be
studied via the suspension flow on $$\tilde \Sigma:=\Sigma \times\z^d   \times M\times \br/\sim$$
where the equivalence is defined via the shift map $\sigma:\Sigma\to \Sigma$, the first return time map $\tau:\Sigma\to \br$,
the $\z^d$-coordinate map $f:\Sigma\to \z^d$ and the holonomy map $\theta:\Sigma\to M$.
The asymptotic behavior of the correlation function of the suspension flow on $\tilde \Sigma$ with respect to the BMS measure
can then be investigated using analytic properties of the associated Ruelle transfer
operators $L_{s, v, \mu}$  of three parameters $s\in \c, v\in \hat{\z}^d, \mu\in \hat M$ 
where $\hat{\z}^d$ and $\hat M$ denote the unitary dual of $\z^d$ and of $M$ respectively (see Def. \ref{ruelle}).
The key ingredient is to show that on the plane $\Re(s)\ge \delta$, 
the map $$s\mapsto (1-L_{s,v,\mu})^{-1}$$
 is holomorphic
except for a simple pole at $s=\delta$, which occurs only when both $v$ and $\mu$
are trivial. To each element $\gamma\in \Gamma_0$ we can associate the length  $\ell(\gamma)\in \br$ and
the Frobenius element
$f(\gamma)\in \z^d$ 
 and the holonomy representation $\theta(\gamma)\in M$.
Our proof of the desired analytic properties of $(1-L_{s,v,\mu})^{-1}$ is based on the study
of the generalized length spectrum of $\Gamma_0$ relative to $\Gamma$:
$$\mathcal GL(\Gamma_0,\Gamma):= \{(\ell(\gamma), f(\gamma), \theta(\gamma))\in \br \times \z^d\times M:\gamma\in \G_0\} .$$
The correlation function for the BMS measure can then be expressed in terms of  the operator
$(1-L_{s,v,\mu})^{-1}$ via an appropriate Laplace/Fourier transform.
We then perform the necessary Fourier analysis to extract the main term coming from the residue.
Finally we can deduce the precise asymptotic of the correlation function for the Haar measure
from that for the BMS measure using ideas originated in Roblin's work (see Theorem \ref{com}). 
In order to prove Theorem \ref{horo}, we first deduce from Theorem \ref{mix2} and the closing lemma that 
the group generated by
the generalized length spectrum
$\mathcal GL(\Gamma_0,\Gamma) $ is dense in $\br \times \z^d\times M$. Using this, we show that 
for any {\it generalized} BMS measure $\mathsf m_\Gamma$ on $\Gamma \ba G$,
 any $N$-invariant measurable function on $\Gamma\ba G$ is invariant by $\Gamma\ba\Gamma_0$ on the left and
 by $AM$ on the right. Then the $AM$-ergodicity of $\mathsf m_\Gamma$, considered as a measure on $\Gamma_0\ba G$,
 implies Theorem \ref{horo}, since the transverse measure of a Babillot-Ledrappier measure
 $\mathsf{m}_{\chi}$ equals to the transverse measure of some generalized BMS measure.
\medskip
 
 \subsection{Organization.}
The paper is organized as follows. In section 2, we introduce the suspension model for the frame flow on abelian covers. In section 3, we 
investigate the analytic properties of the Ruelle transfer operators $L_{s,v,\mu}$. 
In section 4, we deduce the
asymptotic behavior of the correlation functions of the suspension flow with respect to the BMS measure from the study
of the transfer operators made in section 3, and prove Theorem \ref{mix2}.
  In section 5, we study the ergodicity of the frame flow with respect to generalized BMS-measures.
In section 6, we discuss the ergodicity of the Babillot-Ledrappier measures for the horospherical subgroup action, and
deduce a measure classification invariant under the horospherical subgroup.
 Applications to the prime geodesic theorem, and to other counting theorems \ref{mp1} and  \ref{mcc} are discussed in the final section 7.

\medskip
\noindent{\bf Acknowlegment.} We would like to thank Curt McMullen for useful comments.

\section{Suspension model for the frame flow on abelian covers}

\subsection{Set-up and Notations.} Unless mentioned otherwise, we use the notation and assumptions made in
this section throughout the paper.
Let $G$ be a connected simple linear Lie group of real rank one and $K$ a maximal compact subgroup
of $G$. Let $\tilde X=G/K$ be the associated simply connected Riemmanian symmetric space and let
$\partial_\infty(\tilde X)$ be its geometric boundary. Choosing a unit tangent vector $v_o$ at $o:=[K]\in G/K$, the unit tangent bundle $\T^1(\tilde X)$ 
can be identified with $G/M$ where $M$ is the stabilizer subgroup of $v_o$. We let $d$ denote the right $K$-invariant and left
$G$-invariant distance function on $G$ which induces the Riemannian metric on $\tilde X$ which we will also denote by $d$.
Let $A=\{a_t\}< G$  be the one-parameter subgroup of semisimple elements whose right translation action on 
$G/M$ gives the geodesic flow. Then $M$ equals to the centralizer of $A$ in $K$.
We denote by $N^{+}$ and $N^-$ the expanding and the contracting horospherical subgroups of $G$ for the action of $a_t$:
$$N^{\pm}=\{g\in G: a_t g a_t^{-1}\to e\text{ as $t\to \pm \infty$}\}.$$
We denote by $D>0$ the volume entropy
of $\tilde X$, i.e.
$$D:=\lim_{T\to \infty} \frac{\log \op{Vol}(B(o,T))}{T}$$
where $B(o,T)=\{x\in \tilde X: d(o, x)\le T\}$.
For instance, if $\tilde X=\bH^n$, then $D=n-1$.

For a discrete subgroup $\Gamma_0<G$,
we denote by $\Lambda(\G_0)$ the limit set of $\Gamma_0$, which  is the set of all accumulation points
in $\tilde X\cup \partial(\tilde X)$ of an orbit of $\Gamma_0$ in $\tilde X$.

Let $\G_0$ be a Zariski dense and convex cocompact
 subgroup of $G$; this means that the convex hull of $\Lambda(\Gamma)$ is compact modulo $\Gamma$.

 Let $\Gamma <\Gamma_0$ be a normal subgroup with $$\Gamma\ba \Gamma_0\simeq \z^d.$$
 Set $X_0=\Gamma_0\ba \tilde X$ and $X=\Gamma\ba \tilde X$.
 So we may identify $\T^1(X_0)=\Gamma_0\ba G/M$ and $\T^1(X)=\Gamma\ba G/M$.
 The critical exponents of $\Gamma$ and $\Gamma_0$ coincide \cite{DS},
 which we will denote by  $\delta$.
 Then $0<\delta \le D$,
 and $\delta=D$ if and only if $\G_0$ is co-compact in $G$ by Sullivan \cite{Su}.
    As $\Gamma$ is normal, we have $\Lambda(\Gamma)=\Lambda(\Gamma_0)$.

  We recall the construction of the Bowen-Margulis-Sullivan measure $\mathsf{m}^{\BMS}$
  and Burger-Roblin measures $\mathsf{m}^{\BR_{\pm}}$ on $\Gamma_0\ba G$.

We let $\{\mathsf{m}_x: x\in \tilde X\}$ and $\{\nu_x: x\in \tilde X\}$ be $\G_0$-invariant conformal densities of
dimensions $D$ and $\delta$ respectively, unique up to scalings. They are called the Lebesgue density
and the Patterson-Sullivan density, respectively.

The notation $\beta_{\xi}(x,y)$ denotes the Busemann function for $\xi\in \partial(\tilde X)$,
and $x,y\in \tilde X$. 
The Hopf parametrization of $\T^1(\tilde X)$
 as $(\partial^2 (\tilde X) \setminus \text{Diagonal})\times \br$ is given by
 $$u\mapsto (u^+, u^-, s=\beta_{u^-}(o, u))$$ where  $u^{\pm}\in \partial(\tilde X)$
 are the forward and the backward end points of the geodesic determined by $u$
 and $\beta_{u^-}(o,u)=\beta_{u^-}(o, \pi(u))$ for the canonical projection $\pi:\T^1(\tilde X)\to \tilde X$.
  
Using this parametrization, the following defines locally finite Borel measures on $\T^1(\tilde X)$:
 $$d \tilde {\mathsf{m}}^{\BMS}(u)= e^{\delta \beta_{u^+} (o,u)+\delta\beta_{u^-}(0,u)}
 {d\nu_o(u^+) d\nu_o(u^-) ds};$$
 $$d \tilde{ \mathsf{m}}^{\BR_+}(u)= e^{D \beta_{u^+} (o,u) +\delta\beta_{u^-}(0,u)}
 {d\mathsf{m}_o(u^+) d\nu_o(u^-) ds};$$
 $$d \tilde {\mathsf{m}}^{\BR_-}(u)= e^{\delta \beta_{u^+} (o,u) +D\beta_{u^-}(0,u)}
 {d\nu_o(u^+) d\mathsf{m}_o(u^-) ds};$$
 $$d \tilde {\mathsf{m}}^{\Haar}(u)= e^{D \beta_{u^+} (o,u) + D\beta_{u^-}(0,u)}
 {d\mathsf{m}_o(u^+) d\mathsf{m}_o(u^-) ds};$$
 
They are left $\Gamma_0$-invariant  measures on $\T^1(\tilde X)=G/M$. We will use the same notation for
their $M$-invariant lifts  to $G$, which are, respectively, right $AM$, $N^+M$, $N^-M$ and $G$-invariant.
By abuse of notation, the induced measures on $\Gamma_0\ba G$ will be denoted by
$\mathsf{m}^{\BMS}, \mathsf{m}^{\BR_\pm}, \mathsf{m}^{\Haar}$ respectively. 
If $\Gamma_0$ is cocompact in $G$, these measures are all equal to each other, being simply the Haar measure.
In general,
only $\mathsf{m}^{\BMS}$ is a finite measure on $\Gamma_0\ba G$.
An important feature of $\mathsf{m}^{\BMS}$ is that it is the unique measure of maximal entropy (which is $\delta$)
 as a measure on $\T^1(X_0)$.
 
 Since the measures $\tilde {\mathsf{m}}^{\BMS}, \tilde{\mathsf{m}}^{\BR_\pm}, \tilde{\mathsf{m}}^{\Haar}$ are
 all $\Gamma$-invariant as $\Gamma<\Gamma_0$, they also induce measures on $\Gamma\ba G$ for which we will
 use the same notation $\mathsf{m}^{\BMS}, \mathsf{m}^{\BR_\pm}, \mathsf{m}^{\Haar}$ respectively. 

 We will normalize $\mathsf{m}^{\BMS}$ so that
 $$\mathsf{m}^{\BMS}(\Gamma_0\ba G)=1 $$
 which can be done by rescaling $\nu_o$.

\begin{rmk} \rm  We remark that $\{\nu_x\}$ is also the unique $\Gamma$-invariant conformal density of dimension $\delta$ whose
 support is $\Lambda(\Gamma)$, up to a constant multiple; this can be deduced from  \cite[Prop. 11.10, Thm. 11.17]{PPS}). Therefore
 the BMS-measures and BR measures on $\Gamma\ba G$ can be defined canonically without the reference to $\Gamma_0$.
 \end{rmk}

\subsection{ Markov sections and suspension space $\Sigma^{\tau}$.}
Denote by $\Omega_0 $ the non-wandering set of the geodesic flow
$\{a_t\}$ in $ \Gamma_0\ba G/M$, i.e. $$\{x:\text{for any neighborhood $ U$ of $x$}, 
Ua_{t_i}\cap U\ne \emptyset \;\text{ for some $t_i\to \infty$}\}.$$
 The set $\Omega_0$ coincides with the support of $m^{\BMS}$ and is a hyperbolic set for the geodesic flow.
In this subsection, we recall the well-known construction
of a subshift of finite type and its suspension space which gives a symbolic space model
for $(\Omega_0, a_t)$ (see  \cite{PP}, \cite{Bo}, \cite{HK}, \cite{Ra}, \cite{PS} for a general reference).
For each $z\in \Omega_0$, the strong unstable manifold $W^{su}(z)$,
 the strong stable manifold $W^{ss}(z)$, the weak unstable manifold $W^u(z)$ and the weak stable manifold $W^s(z)$ 
 are respectively given by the sets
   $zN^+$, $zN^-, zN^+A, $ and $ zN^-A$ in $\Gamma_0\ba G/M$ respectively.

 Consider a finite set $z_1, \ldots , z_k$ in $\Omega_0$ and choose small compact neighborhoods $U_i$ and $S_i$ of 
 $z_i$ in $W^{su}(z_i) \cap \Omega_0$ and  $W^{ss}( z_i)\cap \Omega_0$ respectively 
 such that $U_i = \overline{\mbox{int}^u(U_i)}$ and
  $S_i = \overline{\mbox{int}^s(S_i)}$. 
Here  $\mbox{int}^u(U_i)$ denotes the interior of $U_i$ in the set $W^{su}(z_i) \cap \Omega_0$ and $\mbox{int}^s(S_i)$ is defined similarly. 
 For $x \in U_i$ and $y \in S_i$, we write $[x, y]$ for the unique local intersection of 
 $W^{ss}(x)$ and $W^{u}(y)$. We call the following sets {\it rectangles}:
\[ R_i = [U_i, S_i] := \{ [x, y]: x\in U_i, y \in S_i \}\]
and denote their interiors by
$\mbox{int}(R_i)= [\mbox{int}^u(U_i), \mbox{int}^s(S_i)] .$
Note that $U_i = [U_i, z_i] \subset R_i$.

Given a disjoint union
 $ R = \cup_i R_i$ of rectangles such that $R A=\Omega_0$, the first return time
$\tau: R \rightarrow \mathbb{R}_{>0}$  and the first return map $\mathcal{P} : R \to R$ are given by
 $$\tau(x) := \inf \lbrace t> 0 : xa_t \in R\rbrace \quad \text {and} \quad \mathcal P(x):= xa_{\tau(x)}.$$
 The associated
 transition matrix $\cA$ is the $k\times k$ matrix given by
\[ {\cA}_{lm} = \left\lbrace  \begin{array}{ll} 1 \quad
\mbox{ if }\mbox{int} (R_l) \cap \mathcal{P}^{-1} \mbox{int} (R_m)  \neq \emptyset \\  0 \quad \mbox{ otherwise.}  \end{array} \right.\]

Fix $\e>0$ much smaller than the injectivity radius of $\Gamma_0\ba G$.
By Ratner \cite{Ra} and Bowen \cite{Bo},
we have  a Markov section for the flow $a_t$ of size $\e$, that is,
a family $\mathcal{R} = \lbrace R_1 , \ldots  ,R_k \rbrace$ of disjoint rectangles satisfying the following:
\begin{enumerate}
\item $ \Omega_0 = \cup_1^k R_i a_{[0, \e]}$
\item the diameter of each $R_i$ is at most $\e$, and 
\item for any $i \neq j$, at least one of $R_i \cap R_ja_{[0, \e]}$ or $R_j \cap R_ia_{[0, \e]}$ is empty.
\item $ \mathcal{P}([\operatorname{int}^uU_i, x])  \supset [\operatorname{Int}^uU_j, \mathcal P (x)]$
   and $ \mathcal{P}([x, \operatorname{Int}^sS_i] )) \subset [\mathcal P (x),\operatorname{Int}^sS_j]  $
for any $x\in \op{int} (R_i)\cap {\mathcal P}^{-1}(\op{int}(R_j))$. 
\item $\cA$ is aperiodic, i.e.
for some $N\ge 1$, all the entries of $\cA^N$ are positive.
\end{enumerate}
 Set $R:=\cup R_i$ and $U:=\cup U_i$.

Let $\Sigma$ be the space of bi-infinite sequences $x \in \lbrace 1, \ldots, k \rbrace^\mathbb{Z}$ such that 
${\cA}_{x_l x_{l+1}} = 1$ for all $l$. We denote by $\Sigma^+$ the space of one sided  sequences 
$$\Sigma^+= \lbrace (x_i)_{ i\ge 0 } : {\cA}_{x_i, x_{i + 1}} = 1 \mbox{ for all } i \geq 0 \rbrace.$$
Non-negative coordinates of $x\in \Sigma$ will be referred
to as future coordinates of $x$.
 A function on $\Sigma$ which depends only on future coordinates can be regarded as a function on 
$\Sigma^+$.

We will write $\sigma: \Sigma \rightarrow \Sigma$ for the shift map $(\sigma x)_i = x_{i+1}$.
By abuse of notation we will also denote by $\sigma$ the shift map acting on $\Sigma^+$.

For $\beta \in (0, 1)$, we can give a metric $d_\beta$ on $\Sigma$ (resp. on $\Sigma^+$) by 
\[ d_\beta(x, x') = \beta^{\inf \lbrace |j|: x_j \neq x'_j \rbrace}.\]

\begin{dfn} [The map $\zeta:\Sigma \rightarrow R$]   \rm
Let $\hat R$ be the set of $x\in R$ such that $\mathcal{P}^{m} x \in \mbox{int} (R ) $ for all $m\in \Z$.
For $ x \in \hat R$, we obtain a sequence $\omega = \omega(x) \in \Sigma$ such that
 $\mathcal{P}^k x\in R_{\omega_k}$  for all $k \in \mathbb{Z}$. The set 
$ \hat \Sigma := \lbrace \omega(x) : x \in \hat R \rbrace $
is a residual set in $\Sigma$.  This map $x\mapsto \omega(x)$ is injective on $\hat R$
as the distinct pair of geodesics diverge from each other either in positive or negative time. We can extend $\omega^{-1}:\hat \Sigma\to \hat R$
a continuous surjective function $\zeta:\Sigma \rightarrow R$, which  intertwines $\sigma$ and $ \mathcal{P}$. 
\end{dfn}
 \begin{dfn} [The map $\zeta^+ : \Sigma ^+ \rightarrow U$] \rm
 Let $ \hat U $ be the set of $ u \in U $ such that $ \Pp^{m} u \in \mbox{int}^u(U)$ for all $m\in \N\cup\{0\}$.
Similarly to the above, we can define an injective map $\hat U\to \Sigma^+$, and then
a continuous surjection $\zeta^+:\Sigma^+\to U$. 
\end{dfn}

For $\beta $ sufficiently close to $1$, the embeddings $\zeta$ and $ \zeta^+$ are Lipschitz. We fix such a $\beta$ once and for all. The space $C_\beta(\Sigma)$
(resp. $C_\beta(\Sigma^+)$)
of $d_\beta$-Lipschitz functions on $\Sigma$ (resp. on $\Sigma^+$) is a Banach space with the usual Lipschitz norm
\be\label{Li} ||\psi ||_{d_\beta} =  \sup |\psi |+ \sup_{x \neq y} \frac{|\psi(x) - \psi(y)|}{d_\beta(x, y)}.\ee

We denote $\tau\circ \zeta \in C_{\beta}(\Sigma)$ by $\tau$ by abusing the notation.

We form the suspension 
\begin{equation*}
\Sigma^{\tau}:=\Sigma \times \mathbb{R}/(x,s)\sim (\sigma x,s-\tau(x))
\end{equation*}
and the suspension flow on $\Sigma^\tau$ is given by $[(x,s)]\mapsto [(x,t+s)]$. 
The  map $[(x,t)]\to \zeta(x) a_t$ 
 is a semi-conjugacy $\Sigma^\tau\to \Omega_0$ intertwining the suspension flow and the geodesic flow $a_t$.

For $g\in C_{\beta}(\Sigma)$, called the potential function, the pressure of $g$ is defined as
\[ \op{Pr}_\sigma(g) := \sup_\mu \left( \int g d\mu + \mbox{entropy}_\mu(\sigma) \right) \]
over all $\sigma$-invariant Borel probability measures $\mu$ on ${\Sigma}$, where $\mbox{entropy}_\mu(\sigma)$ denotes the measure theoretic entropy of $\sigma$ with respect to $\mu$.
The critical exponent $\delta$
 is the unique positive number such that $\op{Pr}(-\delta \tau)=0$. 
Let $\nu$ denote the unique equilibrium measure for $-\delta \tau $, i.e.
$\delta \int \tau d\nu = \mbox{entropy}_\nu(\sigma)$.

The BMS measure $\mathsf{m}^{\BMS}$ on $\Omega_0\simeq \Sigma^\tau$
being the unique probability measure of maximal entropy for the geodesic flow,
corresponds to the measure locally given by
$\frac{1}{\int \tau d\nu} (d\nu\times ds)$
where $ds$ is the Lebesgue measure on $\br$.

For a map $g$ on $\Sigma $  or on $\Sigma^+$ and $n\ge 1$,
we write $$g_n(x)=g(x) +g(\sigma(x)) +\cdots+ g (\sigma^{(n-1)}(x)).$$

\subsection{  $\z^d\times \br$-suspension space $\Sigma^{f,\tau}$.}
Note that $X$ is a regular $\z^d$-cover of the convex cocompact 
manifold $X_0$. 
Let $p$ denote the canonical projection map
$ \T^1(X)\to \T^1(X_0)$. Then $$\Omega_X:=p^{-1}(\Omega_0)\simeq \z^d\times \Omega_0$$ is the support 
of the BMS measure in $\T^1(X)$. 
We enumerate the group of deck transformation
  $\Gamma\ba \Gamma_0$ for the covering map
  $X\to X_0$ as $\{D_\xi: \xi \in \z^d\}$ so that $D_{\xi_1}\circ D_{\xi_2}=D_{\xi_1+\xi_2}$.
 Note $D_\xi$ acts on $\Gamma\ba G/M$ as well as on $\Gamma\ba G$.
 \begin{Def}\label{zd}
Fix a precompact and connected fundamental domain $\mathcal F\subset \Gamma\ba G/M$ 
 for the $\z^d$-action on $\T^1(X)$.
 \begin{enumerate}
 \item We define the $\z^d$-coordinate of $x\in\T^1(X)$ relative to $\mathcal F$ to be
 the unique $\xi\in \z^d$ such that  $x\in D_\xi (\mathcal F)$,
 and write $\xi(x: t)$ for the $\z^d$-coordinate of $xa_t$.

 \item  Choose a continuous section, say $\mathsf s$, from $R$ into $\mathcal F$. 
 Define $f:\Sigma \to \z^d$ as follows: for $x\in \Sigma$,
 $$ f(x)=\xi( \mathsf s\circ \zeta(x)  : \tau(x) ),$$
that is, the $\z^d$-coordinate of $\mathsf s (\zeta(x))a_{\tau(x)}$.
\end{enumerate}
\end{Def}
Note that  $f(x)$ depends only on the two coordinates
$x_0$ and $x_1$, and if $\sigma^n(x)=x$,
then $f_n(x)$ is the Frobenius element of  the closed geodesic in $\T^1(X_0)$ given by $x$.

Consider the suspension space
$$\Sigma^{f,\tau}:= \Sigma\times \mathbb{Z}^d \times \mathbb{R} / (x,  \xi,s)
 \sim (\sigma (x),\xi+f(x),s-\tau(x) )  $$
with the suspension flow $[(x,\xi, s)]\to [(x,\xi, s+t)]$.
The map
$\Sigma^{f,\tau}\to \Omega_X$
given by $$[( x,\xi, t)]\mapsto 
D_{\xi}(\mathsf s\circ \zeta( x))a_t$$ is a Lipschitz surjective map intertwining the suspension flow and the geodesic flow.

If $\sigma^n(x)=x$, then
$$(x,  \xi, s+ { \tau_n}(x))\sim (\sigma^n(x), \xi +f_n(x), s )= (x, \xi+f_n(x), s).$$
Hence
 $[(x, \xi, s)]$ gives rise to a periodic orbit if and only if $\sigma^n(x)=x$ and $f_n(x)=0$ for some $n\in \N$.

\subsection{ $\z^d\times M\times \br$-suspension space $\Sigma^{f,\theta,\tau}$.}
The homogeneous space $\Gamma \ba G$ is a principal $M$-bundle
over $\T^1(X)=\Gamma \ba G/M$. 
Now take a smooth section $\mathsf S: \mathsf s(R) \to \Gamma \ba G$ by trivializing  the bundle locally.

\begin{Def} 
Define  $ \theta: \Sigma \to M$ as follows: for $x\in \Sigma$, $\theta(x)\in M$ is
the unique element satisfying
$$(\mathsf S \circ \mathsf s\circ \zeta)(x) a_{ \tau(x)} = 
D_{f(x)}(\mathsf S\circ \mathsf s \circ \zeta) ( \sigma (x)) \theta(x)^{-1}.$$
\end{Def}

We choose the section $\mathsf S$ a bit more carefully so that the resulting holonomy map $\theta$ depends only on future coordinates:
 first trivialize the bundle over each $\mathsf s(U_j)$ and extend the trivialization to $
 \mathsf s(R_j)$ by requiring 
$( \mathsf S \circ \mathsf s)([u,s_1]) $ and $( \mathsf S \circ \mathsf s)([u,s_2])$ be forward asymptotic for all $u \in U_j$ and $s_1, s_2\in S_j.$

If $x\in \Sigma$ has period $n$,
then $\theta_n(x)^{-1}$
is in the same conjugacy class as the holonomy associated to the closed geodesic $\zeta(x)A$.

\medskip

We set $\Omega$ to be the preimage of $\Omega_X$ under the projection $\Gamma\ba G \to \Gamma\ba G/M$;
this is precisely the support of $\mathsf{m}^{\BMS}$ defined as a measure on $\Gamma\ba G$ in the previous section.

 Consider the suspension space $$\Sigma^{f,\theta,\tau}:=\Sigma \times \mathbb{Z}^d\times M \times \br/
 (x,\xi,  m, s )\sim (\sigma x,  \xi+f(x), \theta^{-1} (x) m  ,s-\tau(x))   $$
 with the suspension flow $[(x,\xi, m, s)]\mapsto [(x,\xi, m, s+t)]$.
Now the map
$\pi:\Sigma^{f,\theta ,\tau}\to \Omega$
given by $$[( x,\xi, m, t)]\mapsto 
D_{\xi}(\mathsf S \circ \mathsf s\circ \zeta ( x))m a_t$$ is 
a Lipschitz surjective map intertwining the suspension flow and the $a_t$-flow.

The BMS measure $\mathsf{m}^{\BMS}$ on $\Omega$ corresponds to the measure
locally given by the product of $\nu$ and the Haar measure on $ \Z^d\times M\times\br$:
 $$d\mathsf{m}^{\BMS}=\frac{1}{\int \tau d\nu} \pi_*(d\nu  d {\xi}  dm ds).$$

\section{Analytic properties of Ruelle operators $ L_{z, v,\mu}$}
We continue the setup and notations from section 2 for $G, \Gamma, \Gamma_0,
\Sigma, M, \tau, \theta,\delta $ etc.
In particular, $\Gamma$ is a normal
subgroup of a Zariski dense convex cocompact subgroup
$\Gamma_0$ with $\Gamma\ba \Gamma_0 \simeq \z^d$ for some $d\ge 0$.
We note that the functions $\tau:\Sigma\to \br_{>0}$ and $\theta:\Sigma\to M$ depend only on future coordinates, and hence
we may regard them as functions on $\Sigma^+$. 

For $\psi\in C_\beta(\Sigma^+)$, the Ruelle operator $L_\psi:C_\beta(\Sigma^+) \to C_\beta (\Sigma^+ )$
is defined by
$$L_\psi (g)(x)=\sum_{\sigma(y)=x} e^{-\psi(y)} g(y). $$

The Ruelle-Perron-Frobenius theorem implies the following (cf. \cite{PP}):

\begin{Thm}\label{rpf}
\label{R-P-F}
\begin{enumerate}
\item  $1$ is the unique eigenvalue of the maximum modulus of $L_{\delta \tau}$, and the corresponding eigenfunction
$h\in C_\beta(\Sigma^+)$ is positive. 

 \item The remainder of the spectrum of $L_{\delta \tau}$ is contained in a disc of radius strictly smaller than $1$.

\item There exists a unique probability measure $\rho$ on $\Sigma^+$ 
such that $L_{\delta \tau }^*(\rho)=\rho$, i.e. $\int L_{\delta \tau} \psi d\rho
=\int \psi d\rho$, and $ h d\rho= d \nu$. 
\end{enumerate}
\end{Thm}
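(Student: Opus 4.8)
The plan is to run the classical Ruelle--Perron--Frobenius machinery for Hölder potentials over an aperiodic subshift of finite type, exploiting two structural facts: the aperiodicity of $\cA$, and the fact that by the very definition of $\delta$ the potential $-\delta\tau$ has zero pressure, so that the leading eigenvalue should be exactly $1$. Write $L:=L_{\delta\tau}$, so that $L^n g(x)=\sum_{\sigma^n(y)=x} e^{-\delta\tau_n(y)} g(y)$ is a positive operator preserving $C_\beta(\Sigma^+)$.

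First I would produce the eigenmeasure. The dual $L^*$ acts on the weak-$*$ compact convex set of Borel probability measures on $\Sigma^+$ via $\mu\mapsto L^*\mu/\int L\mathbf{1}\,d\mu$; Schauder--Tychonoff yields a fixed point $\rho$, so $L^*\rho=\lambda\rho$ with $\lambda=\int L\mathbf{1}\,d\rho>0$. The variational principle together with $\op{Pr}(-\delta\tau)=0$ forces $\lambda=1$. Next I would build the positive eigenfunction. The key analytic input is bounded distortion: since $\tau\in C_\beta(\Sigma^+)$, for $y,y'$ lying in corresponding preimage branches over $d_\beta$-close points one has $|\delta\tau_n(y)-\delta\tau_n(y')|\le C\,d_\beta(\sigma^n y,\sigma^n y')$ with $C$ independent of $n$, using $d_\beta(\sigma^k y,\sigma^k y')\le \beta^{\,n-k}$ along the branch. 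This gives a uniform two-sided bound and equicontinuity for the family $\{L^n\mathbf{1}\}$ (note $\int L^n\mathbf{1}\,d\rho=1$); an Arzelà--Ascoli and Cesàro argument extracts a limit $h$ with $Lh=h$, and aperiodicity of $\cA$ (some power of $L$ sends nonnegative nonzero functions to strictly positive ones) gives $h>0$ and $h\in C_\beta(\Sigma^+)$, proving the eigenfunction assertion of (1). Setting $d\nu:=h\,d\rho$ and normalizing then gives (3).

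The heart of the argument, and the step I expect to be the main obstacle, is the spectral gap (2). I would pass to the normalized operator $\hat L g:=h^{-1}L(hg)$, which satisfies $\hat L\mathbf{1}=\mathbf{1}$ and preserves the probability measure $\nu$ (indeed $\int\hat L g\,d\nu=\int L(hg)\,d\rho=\int hg\,d\rho=\int g\,d\nu$). The crucial estimate is a Lasota--Yorke (Doeblin--Fortet) inequality
$$\|\hat L^n g\|_{d_\beta}\le A\,\beta^n\,\|g\|_{d_\beta}+B\,\sup|g|,$$
whose contraction factor $\beta^n$ reflects that preimages under $\sigma^n$ of $d_\beta$-close points are $\beta^n$-close, while the second term absorbs the bounded-distortion error in the weights. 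Since the inclusion $C_\beta(\Sigma^+)\hookrightarrow C^0(\Sigma^+)$ is compact, the Ionescu--Tulcea--Marinescu theorem then shows $\hat L$ is quasi-compact with essential spectral radius at most $\beta$; hence outside any disc of radius in $(\beta,1)$ the spectrum is finite and consists of eigenvalues of finite multiplicity. Finally I would show the peripheral spectrum reduces to the simple eigenvalue $1$: if $\hat Lg=zg$ with $|z|=1$, then $|g|\le\hat L|g|$, and integrating against the invariant $\nu$ forces equality, so $|g|$ is constant; equality in the triangle inequality aligns the phases along every branch, and tracing around periodic orbits forces $z^p=1$ for every period $p$, whence $z=1$ by aperiodicity (the periods have gcd $1$). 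The same phase-rigidity shows the eigenspace of $1$ is one-dimensional, and conjugating back by $h$ transfers simplicity and the gap from $\hat L$ to $L$, completing (1) and (2).
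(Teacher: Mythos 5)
The paper offers no proof of this statement: it is quoted as ``the Ruelle--Perron--Frobenius theorem'' with a citation to Parry--Pollicott \cite{PP}, so there is no in-paper argument to compare against. Your proposal is precisely the classical proof underlying that citation --- Schauder--Tychonoff on the dual to produce the eigenmeasure, the normalization $\op{Pr}_\sigma(-\delta\tau)=0$ to pin the leading eigenvalue at $1$, bounded distortion plus Arzel\`a--Ascoli for the positive eigenfunction $h$, a Lasota--Yorke inequality with Ionescu-Tulcea--Marinescu quasi-compactness for the gap, and the convexity/phase-rigidity argument with aperiodicity of $\cA$ for the peripheral spectrum --- and it is correct. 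Three steps are compressed and deserve a line each if written out: (i) that the fixed-point eigenvalue $\lambda=\int L\mathbf{1}\,d\rho$ equals $e^{\op{Pr}_\sigma(-\delta\tau)}=1$ requires identifying the spectral radius with the pressure, e.g.\ via bounded distortion and $\op{Pr}_\sigma(f)=\lim_n \tfrac1n\log\sum_{\sigma^n y=y}e^{f_n(y)}$; (ii) integrating $|g|\le \hat L|g|$ against $\nu$ only gives $\hat L|g|=|g|$ $\nu$-a.e., and passing to ``$|g|$ is constant'' needs the full support of $\nu$ together with density of $\bigcup_n\sigma^{-n}(x_0)$ (equivalently, ergodicity of $\nu$) --- aperiodicity supplies this, but it is a separate step from the integration; (iii) the uniqueness of $\rho$ asserted in part (3) is not addressed explicitly, though it follows immediately from your spectral gap via $L^n g\to \rho(g)h$ in $C_\beta(\Sigma^+)$, which forces any fixed probability eigenmeasure to agree with $\rho$ on the dense subspace $C_\beta(\Sigma^+)$.
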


\subsection{ Three-parameter Ruelle operators on vector-valued functions.}
Denote by $\hat M$ the unitary dual of $M$, i.e. the space of all irreducible
unitary representations $(\mu, W)$ of $M$ up to isomorphism. As $M$ is compact,
they are precisely  irreducible finite dimensional representations of $M$.
We write $\mu=1$ for the trivial  representation.
Similarly, $\hat \z^d$ denotes the unitary dual of $\z^d$.
We identify $\hat \z^d$ with $\bT^d:=(\br/(2\pi \z))^d$ via
the isomorphism $\bT^d\to \hat \z^d$ given by
$\chi_v(\xi)=e^{i\langle v,\xi \rangle}$.
In our study of the correlation function of the suspension flow
 on $\Omega=\Sigma\times   \Z^d \times M\times \br/\sim$ with respect to
the BMS measure, an understanding of the spectrum 
of {\it three} parameter Ruelle operators indexed by triples $(z, v, \mu)\in \c \times \bT^d\times \hat M$
will play a crucial role.

 \begin{dfn} \label{ruelle} \rm For each triple $(z, v, (\mu, W))\in \c \times \bT^d\times \hat M$,
  define the transfer operator 
  \be\label{rue}
  L_{z, v,\mu} : C_\beta (\Sigma^+, W)\to  C_\beta (\Sigma^+, W)\ee by
$$L_{z, v,\mu} (g)(x)=\sum_{\sigma(y)=x} e^{-z\tau (y) + i \langle  v, f(y)\rangle} \mu(\theta(y)) g(y) ,$$
where $C_\beta (\Sigma^+, W)$ denotes the Banach space of $W$-valued Lipschitz maps with Lipschitz norm
defined analogously as \eqref{Li} using a Hermitian norm on $W$.
\end{dfn} 
We write $L_{z, v}$ for $L_{z,v, 1}$ and $L_{z}$ for $L_{z, 0,1}$ for simplicity.

Denoting the center of $M$ by $Z(M)$, the following is well-known:
\begin{lem}\label{zm}
The group $Z(M)$ is at most $1$-dimensional and $$M=Z(M)[M,M].$$
\end{lem}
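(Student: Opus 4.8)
The plan is to reduce the statement to two Lie-theoretic facts about $\mathfrak{m}=\Lie(M)$ and then invoke the structure theory of compact groups. Recall that $M=Z_K(A)$ is compact, and in the rank-one setting it is connected: this can be read off the classification, since for the four families $\SO(n,1)$, $\SU(n,1)$, $\Sp(n,1)$, $F_4^{-20}$ one has $M\cong \SO(n-1)$, $\mathrm U(n-1)$, $\Sp(n-1)\cdot\Sp(1)$, $\Spin(7)$, all connected. Granting connectedness, the decomposition $M=Z(M)[M,M]$ is immediate from the structure theorem for connected compact Lie groups: $M=Z(M)^\circ\cdot [M,M]$, where $[M,M]$ is the (closed, connected, semisimple) commutator subgroup and $Z(M)^\circ$ is the maximal central torus; since $Z(M)^\circ\subseteq Z(M)$ this already yields $M=Z(M)[M,M]$. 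As $Z(M)/Z(M)^\circ$ is finite and the Lie algebra of the center of a connected group is the center of its Lie algebra, we have $\dim Z(M)=\dim Z(M)^\circ=\dim\mathfrak z(\mathfrak m)$. It therefore remains to show that $\mathfrak z(\mathfrak m)$ is at most one-dimensional.

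For this I would exploit the adjoint action of $M$ on the restricted root space $\mathfrak g_\alpha$, where $\pm\alpha$ (and possibly $\pm 2\alpha$) are the restricted roots of the rank-one group $G$. Two inputs are needed. First, this representation is faithful on the center: if $Z\in\mathfrak z(\mathfrak m)$ kills $\mathfrak g_\alpha$, then, since $Z\in\mathfrak k$ is fixed by the Cartan involution, it also kills $\mathfrak g_{-\alpha}$, hence kills $\mathfrak g_{\pm 2\alpha}=[\mathfrak g_{\pm\alpha},\mathfrak g_{\pm\alpha}]$ by the Jacobi identity; as $Z$ already centralizes $\mathfrak g_0=\mathfrak m\oplus\mathfrak a$, it centralizes all of $\mathfrak g$, so $Z=0$ by simplicity of $\mathfrak g$. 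Second, $M$ acts irreducibly on $\mathfrak g_\alpha$: indeed $M$ acts transitively on the unit sphere of $\mathfrak g_\alpha$, a classical feature of rank-one groups, and the one-dimensional case is trivial. By Schur's lemma the commutant $D:=\operatorname{End}_M(\mathfrak g_\alpha)$ is then one of $\mathbb R,\mathbb C,\mathbb H$.

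Now the image of $\mathfrak z(\mathfrak m)$ lands in $D$ (it commutes with $M$) and, since $M$ acts orthogonally with respect to an invariant inner product, consists of skew-symmetric operators; thus it is an abelian subalgebra of the Lie algebra of skew elements of $D$. That Lie algebra is $0$, $\mathbb R$, or $\operatorname{Im}\mathbb H\cong\mathfrak{su}(2)$ according as $D=\mathbb R,\mathbb C,\mathbb H$, each of rank at most one, so every abelian subalgebra is at most one-dimensional. Combined with faithfulness this gives $\dim\mathfrak z(\mathfrak m)\le 1$, completing the proof. The only genuinely delicate points are the two classical inputs — the connectedness of $M$ and the transitivity of $M$ on the unit sphere of $\mathfrak g_\alpha$ — both of which are standard in rank one (e.g.\ via Helgason's classification of two-point homogeneous spaces); everything else is formal. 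A less conceptual but entirely self-contained alternative simply lists $\mathfrak m$ for the four families and reads off $\mathfrak z(\mathfrak m)\in\{0,\mathbb R\}$ directly.
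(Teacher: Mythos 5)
The paper offers no proof of this lemma at all: it is invoked as ``well-known'' (the relevant structure theory of $M$ for rank-one groups is implicit in the cited work of Winter \cite{Win}). So there is no argument to compare yours against, and the question is simply whether your proof stands on its own. In substance it does, and its skeleton is the standard one: reduce $M=Z(M)[M,M]$ to the structure theorem $M=Z(M)^\circ\,[M,M]$ for connected compact groups, and bound $\dim\mathfrak z(\mathfrak m)$ by mapping $\mathfrak z(\mathfrak m)$ into the commutant of the $M$-action on $\mathfrak g_\alpha$. The faithfulness step is correct (including the identity $[\mathfrak g_{\pm\alpha},\mathfrak g_{\pm\alpha}]=\mathfrak g_{\pm2\alpha}$, which holds in rank one since $\mathfrak n$ is of Heisenberg type), the Schur/Frobenius step is correct, and the observation that an abelian subalgebra of the skew part of $\br$, $\c$, or $\bH$ is at most one-dimensional is exactly right.

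The one genuine inaccuracy is your connectedness claim. Connectedness of $M$ is \emph{not} a property of the isogeny class, so it cannot be ``read off'' from the list $\SO(n-1)$, $\op{U}(n-1)$, $\Sp(n-1)\cdot\Sp(1)$, $\Spin(7)$, which presupposes particular global forms such as $\SO(n,1)^\circ$. The paper's hypotheses allow any connected simple \emph{linear} rank-one group, and $G=\op{SL}_2(\br)$ is a counterexample: there $M=\{\pm I\}$ is disconnected, so your appeal to the structure theorem for connected compact groups breaks down as written. The gap is harmless but must be closed explicitly: $\mathfrak m=0$ forces $\mathfrak g\cong\mathfrak{sl}_2(\br)$, in which case the linear connected forms are $\op{SL}_2(\br)$ and $\op{PSL}_2(\br)$, where $M$ is finite abelian, $Z(M)=M$, and the lemma is trivial; when $\mathfrak m\ne 0$ one checks (form by form, through the covers and central quotients of the four families that remain linear) that $M$ is indeed connected. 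Your second classical input --- transitivity of $M$ on the unit sphere of $\mathfrak g_\alpha$ --- is genuinely true in all four families (it is the $J^2$-type phenomenon underlying the Heisenberg-type structure of $\mathfrak n$), but note that it too is a case check rather than an abstract consequence of two-point homogeneity, which only gives $K$-transitivity on the sphere of $\mathfrak p$; since all you need is irreducibility of $\mathfrak g_\alpha$ over $\br$, you could equally verify that directly. In short: correct modulo the $\op{SL}_2(\br)$ caveat, and honest about where the classification is doing the work; the self-contained alternative you mention at the end (listing $\mathfrak z(\mathfrak m)$, supplemented by the group-level decomposition for each form) is in fact closer to what ``well-known'' means here.
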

Hence we may identify $Z(M)$ with $M/[M,M]$. We denote by $[m]\in Z(M)=M/[M,M]$ for the projection of $m\in M$.
If $\mu$ is one-dimensional,
then $\mu$ is determined by $\mu|_{Z(M)}$.
If $Z(M)$ is non-trivial,  then $Z(M)\simeq \br/(2\pi \z)$, which we may identify with $[0,2\pi)$, and hence
any one dimensional unitary representation $\mu$ is of the form
 $\chi_p(m )=e^{i p [m]}$ for some integer $p\in \z$.
In this case, we write $L_{z,v, p}$ for $L_{z,v,\mu}$.
 
\subsection{ Spectrum of Ruelle operators.} The aim of this subsection is to prove
Theorem \ref{mrr} on analytic properties of $L_{\delta+it, v, \mu}$.
We denote by $\op{Fix}(\sigma^n)$ the set of $y\in \Sigma^+$ fixed by $\sigma^n$.
The following proposition is a key ingredient in understanding the spectrum of $L_{\delta+ it, v,\mu}$'s.
 \begin{prop}\label{pmc}
 \label{den}
 \begin{enumerate}
\item There exists  $y\in \Sigma^+$ such that
 $\{(\sigma^n(y), \theta_n(y))\in \Sigma^+ \times M: n\in \mathbb N\}$
 is dense in $\Sigma^+ \times M$.

 \item 
  There exists $y\in \op{Fix}(\sigma^n)$ for some $n$ with $f_n(y)=0$ such that
  $[\theta_n(y)]$ generates a dense subgroup in $Z(M)$.
 \end{enumerate}
 \end{prop}
\begin{proof} 
Claim (1) follows
from the existence of a dense $A^+$ orbit in $\Omega_0\subset \T^1(X_0)$
which is a consequence of the $A$-ergodicity of $\mathsf{m}^{\BMS}$ on $\Omega_0$ \cite{Win} (cf. Appendix of this paper).
The claim (2) is non-trivial only when $Z(M)$ is non-trivial; in this case, $Z(M)=\op{SO}(2)$ by Lemma \ref{zm}.
Applying the work of Prasad and Rapinchuk {\cite{PR}} to $\Gamma$, we obtain a hyperbolic element $\gamma
\in \Gamma$ that is conjugate to $a_\gamma m_\gamma\in AM$
and $m_\gamma$ generates a dense subset in $Z(M)=M/[M,M]$. The element $\gamma$
defines a closed geodesic in $\Omega_0$ which again yields an
element $y\in  \op{Fix}(\sigma^n), f_n(y)=0$ and $[\theta_n(y)]=[m_\gamma]$ for some $n$.
This implies the claim.
\end{proof}

\begin{lem}
\label{sha}
 The subgroup generated by
$\cup_{n\ge 1} \{(\tau_n(y), f_n(y))\in \br \times \z^d :y\in \op{Fix}(\sigma^n) \}$ is dense in $\br\times \z^d$.
\end{lem}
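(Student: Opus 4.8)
The plan is to pass from the symbolic data back to the geometry of $\Gamma_0$ and to analyze the closed subgroup $H\le\br\times\z^d$ generated by the pairs $(\tau_n(y),f_n(y))$ with $y\in\op{Fix}(\sigma^n)$. Recall that every $y\in\op{Fix}(\sigma^n)$ corresponds to a closed orbit of the geodesic flow in $\Omega_0$, i.e.\ to a conjugacy class of a loxodromic element $\g\in\Gamma_0$, with $\tau_n(y)=\ell(\g)$ the translation length and $f_n(y)=[\g]$ the image of $\g$ in $\Gamma\ba\Gamma_0\simeq\z^d$; conversely every loxodromic $\g$ arises this way, since its axis lies in the convex hull of $\Lambda(\Gamma_0)$ and hence its closed geodesic lies in $\Omega_0$. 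Thus $H$ is the subgroup generated by $\{(\ell(\g),[\g]):\g\in\Gamma_0\ \text{loxodromic}\}$. Since every closed subgroup of $\br\times\z^d$ that surjects onto $\z^d$ and contains $\br\times\{0\}$ is all of $\br\times\z^d$, I would reduce the lemma to two statements: (a) the images $[\g]$ of loxodromic elements generate $\z^d$, so that $\overline H$ surjects onto $\z^d$; and (b) $\overline H\supseteq\br\times\{0\}$.

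For (a), fix one loxodromic $h\in\Gamma_0$ (which exists since $\Gamma_0$ is non-elementary). For an arbitrary $g\in\Gamma_0$, a standard contraction argument (North--South dynamics of $h$ on $\partial_\infty(\tilde X)$) shows that $gh^k$ is loxodromic for all sufficiently large $k$, whence $[g]=[gh^k]-k[h]$ lies in the subgroup generated by $\{[\g]:\g\ \text{loxodromic}\}$. As the quotient map $\Gamma_0\to\z^d$ is onto, this subgroup is all of $\z^d$. For (b), I would use time-reversal: if $\g$ is loxodromic then so is $\g^{-1}$, with the same length $\ell(\g^{-1})=\ell(\g)$ and opposite image $[\g^{-1}]=-[\g]$ in the abelian group $\z^d$; hence both $(\ell(\g),[\g])$ and $(\ell(\g),-[\g])$ lie in $H$, so $(2\ell(\g),0)\in H$. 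Therefore $H\cap(\br\times\{0\})$ contains the subgroup generated by $\{2\ell(\g):\g\ \text{loxodromic}\}$, and it remains only to see that the length spectrum $\{\ell(\g):\g\in\Gamma_0\ \text{loxodromic}\}$ generates a dense subgroup of $\br$.

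This last point is the one substantive external input, and the main obstacle: the non-arithmeticity of the length spectrum of the Zariski dense group $\Gamma_0$. I would invoke the standard fact that, for a Zariski dense discrete subgroup of a rank one simple Lie group, the subgroup of $\br$ generated by the lengths is dense; concretely, were all lengths contained in some $c\z$, then all the corresponding eigenvalue moduli would be confined to a single geometric progression, forcing $\Gamma_0$ into a proper algebraic subgroup and contradicting Zariski density. (Equivalently, one may phrase (a) and (b) dually: if $(s,v)\in\br\times\bT^d$ annihilates $H$, then $s\tau_n(y)+\langle v,f_n(y)\rangle\in2\pi\z$ for all periodic $y$, and the reversal and surjectivity arguments force $s=0$ and $v=0$.) Granting non-arithmeticity, $\{2\ell(\g)\}$ generates a dense subgroup of $\br$, so $\overline H\supseteq\br\times\{0\}$; combined with (a) this yields $\overline H=\br\times\z^d$, as desired.
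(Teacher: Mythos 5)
Your proof is correct, and it follows the paper's basic decomposition (show the generated subgroup $H$ projects onto $\z^d$, then show $\overline{H}\supseteq \br\times\{0\}$), but it takes a genuinely different route at the substantive second step. The paper's proof is three lines: surjectivity onto $\z^d$ is asserted ``by the construction of $f$'', and density in the $\br$-factor is obtained by restricting attention to periodic points with $f_n(y)=0$ --- these correspond to closed geodesics that remain closed in the cover, i.e.\ to hyperbolic elements of the normal subgroup $\Gamma$ itself, so non-arithmeticity of the length spectrum of $\Gamma$ \cite{Ki} finishes directly. You instead use the orientation-reversal symmetry $\gamma\mapsto\gamma^{-1}$ --- which is exactly the paper's Lemma \ref{rev}, deployed there in the transfer-operator analysis (Prop.\ \ref{sp3}, Thm.\ \ref{sp2}) but not in its proof of Lemma \ref{sha} --- to manufacture the elements $(2\ell(\gamma),0)$, and then invoke non-arithmeticity for $\Gamma_0$ rather than for $\Gamma$. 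Each version buys something: yours needs Kim's theorem only for $\Gamma_0$, which is Zariski dense by hypothesis, whereas the paper's appeal to the length spectrum of $\Gamma$ implicitly uses that an infinite normal subgroup of a Zariski dense group is again Zariski dense; conversely, the paper's route establishes the slightly stronger fact that already the trivial-Frobenius lengths $\{\tau_n(y): f_n(y)=0\}$ generate a dense subgroup of $\br$, in the same spirit as Prop.\ \ref{den}(2). Your North--South dynamics argument for surjectivity is likewise more than the paper records and more robust than needed: since $\Gamma_0$ is convex cocompact it has no parabolics, so every infinite-order element is loxodromic and any preimage of a given $\xi\in\z^d$ already serves. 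One caveat: your parenthetical sketch of non-arithmeticity (lengths in $c\z$ would confine eigenvalue moduli to a geometric progression, ``forcing $\Gamma_0$ into a proper algebraic subgroup'') is a heuristic, not a proof as stated; but since you explicitly invoke the standard fact --- precisely the paper's citation \cite{Ki} --- this is a matter of attribution rather than a gap.
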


\begin{proof} Denote by $H$ the subgroup in concern.
The projection of $H$ to $\z^d$ is surjective by the construction of $f$. Therefore
it suffices to show that $H\cap (\br \times \{0\})$
is $\br$.
This follows because the length spectrum of $\Gamma$ is non-arithmetic { \cite{Ki}}.
\end{proof}

We will denote by $\sigma_0(L_{z,v, \mu})$ the spectral radius of the operator $L_{z,v, \mu}$ on $C_\beta(\Sigma^+, W)$.
\begin{prop}\label{sp1}  Let $(\mu, W) \in \hat M$, and $(t,v)\in \br \times \bT^d$.
\begin{enumerate} 
\item We have $\sigma_0(L_{\delta+it, v, \mu}) \le 1$.
\item If $\sigma_0(L_{\delta+it, v,\mu})=1$, then 
$L_{\delta+it, v,\mu}$ has a simple eigenvalue of modulus one  and $\mu$ is  $1$-dimensional.
\end{enumerate} \end{prop}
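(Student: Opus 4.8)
The plan is to analyze the operator $L_{\delta+it,v,\mu}$ via a comparison with the reference operator $L_{\delta\tau}$ from Theorem~\ref{rpf}, treating the extra factor $e^{it\tau(y)+i\langle v,f(y)\rangle}\mu(\theta(y))$ as a unitary twist of the weights. For part (1), I would first observe that for $g\in C_\beta(\Sigma^+,W)$ the pointwise bound $\norm{L_{\delta+it,v,\mu}(g)(x)}\le \sum_{\sigma(y)=x} e^{-\delta\tau(y)}\norm{g(y)}$ holds, since $|e^{it\tau(y)+i\langle v,f(y)\rangle}|=1$ and $\mu(\theta(y))$ is a unitary matrix (as $M$ is compact and $\mu\in\hat M$). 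Thus $L_{\delta+it,v,\mu}$ is dominated in norm by $L_{\delta\tau}$ acting on the scalar function $\norm{g(\cdot)}$. Iterating, $\norm{L_{\delta+it,v,\mu}^n g}_\infty \le \norm{L_{\delta\tau}^n \norm{g}}_\infty$, and since $L_{\delta\tau}$ has spectral radius $1$ by Theorem~\ref{rpf}(1)--(2), taking $n$-th roots and passing to the limit gives $\sigma_0(L_{\delta+it,v,\mu})\le 1$.

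For part (2), the main work lies in a Livšic-type rigidity argument. Suppose $\sigma_0(L_{\delta+it,v,\mu})=1$. By a standard Ruelle-operator perturbation argument (the operator depends analytically on its parameters and $L_{\delta\tau}$ has a spectral gap by Theorem~\ref{rpf}(2)), any spectral value of modulus one must be a simple isolated eigenvalue; so there is a unit eigenvalue $\lambda=e^{i\phi}$ with eigenfunction $G\in C_\beta(\Sigma^+,W)$, $L_{\delta+it,v,\mu}G=\lambda G$. The equality case in the triangle inequality, combined with positivity of the eigenfunction $h$ of $L_{\delta\tau}$, forces an alignment condition: writing $G=h\cdot u$ for a unit-vector-valued function $u$, one derives that for every $y$ with $\sigma(y)=x$,
\begin{equation*}
e^{it\tau(y)+i\langle v,f(y)\rangle}\mu(\theta(y))\,u(y)=\lambda\,u(x).
\end{equation*}
Iterating around periodic orbits $y\in\op{Fix}(\sigma^n)$ yields, for each such $y$, the cocycle identity $e^{it\tau_n(y)+i\langle v,f_n(y)\rangle}\mu(\theta_n(y))=\lambda^n\cdot(\text{conjugation})$, relating the phases and the $M$-holonomy along closed orbits to powers of $\lambda$.

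The heart of the matter, and the step I expect to be the main obstacle, is extracting from this periodic-orbit identity that $\mu$ must be one-dimensional. Here I would invoke Proposition~\ref{den}(1): the density of $\{(\sigma^n(y),\theta_n(y))\}$ in $\Sigma^+\times M$ lets me propagate the alignment relation across the whole symbolic space and, by continuity of $u$, conclude that the matrices $\mu(\theta_n(y))$ act on the finite-dimensional range of $u$ in a way compatible with a one-dimensional invariant structure. More precisely, the simplicity of the eigenvalue forces $u$ to span a one-dimensional subspace invariant under all the $\mu(\theta_n(y))$; since these generate a dense subgroup of $\mu(M)$ by Proposition~\ref{den}(1), irreducibility of $\mu$ then forces $\dim W=1$. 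The density input is essential precisely because without it the holonomies along periodic orbits might generate only a proper subgroup of $M$, leaving room for higher-dimensional $\mu$ to satisfy the cocycle equation on a proper invariant subspace. The remaining simplicity and isolation of the eigenvalue follow from the spectral-gap persistence of analytic perturbations of $L_{\delta\tau}$, so the genuinely new content is the dimensionality reduction powered by Proposition~\ref{den}.
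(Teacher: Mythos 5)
Your dimensionality-reduction argument is essentially the paper's: the paper extracts the pointwise eigen-equation by the convexity argument of \cite[p.54]{PP}, sets $g(y,m)=\mu(m)^{-1}w(y)$ on $\Sigma^+\times M$, observes that $g$ is multiplied by a unimodular scalar under the skew-product map $(y,m)\mapsto(\sigma y,\theta(y)m)$, and then uses the dense orbit of Proposition \ref{den}(1) together with continuity to force the range of $g$ into a single complex line $\c w_0$, whence $\c w_0$ is $\mu(M)$-invariant and $\dim W=1$ by irreducibility. Your ``propagate the alignment relation by density and continuity of $u$'' is exactly this, though one phrase deserves correction: it is not \emph{simplicity} that makes $\c u(y)$ invariant under the $\mu(\theta_n(y))$ --- each such matrix sends $u(y)$ to a scalar multiple of $u(\sigma^n y)$, a \emph{different} line in general. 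Invariance of $\c u(y)$ under a given $m\in M$ is obtained only in the limit along times $n_k$ with $(\sigma^{n_k}y,\theta_{n_k}(y))\to(y,m)$, and it is precisely the density of the skew-product orbit in the \emph{product} $\Sigma^+\times M$ (not merely density of the holonomies in $M$) that supplies such return times. Your own closing remark about why product-density is essential shows you have this mechanism, so this is a slip of phrasing rather than of substance.

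There are, however, two points where your stated justifications would fail. First, and most seriously: you derive ``simple, isolated'' from analytic perturbation of the spectral gap of $L_{\delta\tau}$. Perturbation theory (Theorem \ref{pt}) controls the spectrum only for $(t,v,\mu)$ \emph{near} $(0,0,1)$; for an arbitrary parameter with $\sigma_0(L_{\delta+it,v,\mu})=1$ --- say $t$ large --- the operator is not a small perturbation of $L_{\delta\tau}$ and the argument gives nothing. (If it did, Theorem \ref{sp2}, which the paper proves at some length to exclude unit eigenvalues away from $(0,0,1)$, would be vacuous.) The correct mechanism, which the paper invokes by citing \cite[Thms.\ 2.2, 4.5, 10.2]{PP}, is quasi-compactness: a Lasota--Yorke inequality bounds the essential spectral radius of $L_{\delta+it,v,\mu}$ on $C_\beta(\Sigma^+,W)$ strictly below $1$ uniformly in the twist, so any peripheral spectral value is an isolated eigenvalue of finite multiplicity, and simplicity follows from the convexity/positivity comparison with the RPF eigenfunction $h$. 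Second, in part (1) your sup-norm domination $\|L^n_{\delta+it,v,\mu}g\|_\infty\le\|L^n_{\delta\tau}\|g\|\,\|_\infty$ bounds only the $C^0$ growth of iterates, while $\sigma_0$ is the spectral radius on the Lipschitz space $C_\beta(\Sigma^+,W)$; converting the sup-norm bound into $\sigma_0\le 1$ again requires the quasi-compactness input (eigenfunctions of peripheral eigenvalues lie in $C_\beta$ but are then estimated in $C^0$). Both repairs are standard and are exactly what the paper's citation of \cite{PP} supplies; with them, your proof coincides with the paper's.
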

\begin{proof} 
(1) and the first part of (2) follow from Theorems 8.1 and 8.3 of \cite{PP}. 

Suppose $\sigma_0(L_{\delta+it, v,\mu})=1$. Then for some {$w\in C_\beta(\Sigma^+, W)$} and $b\in \br$,
 $$L_{\delta+it, v,\mu}w=e^{ib} w.$$ 
Using the convexity argument {(as in p.54 of \cite{PP})}, it follows that
$$e^{i (-t \cdot r(y) +\langle v, f(y)\rangle)} \mu(\theta(y)) w(y)= e^{ib} w(\sigma(y))$$
for all $y\in \Sigma^+$. In other words,
$$e^{i (-t \cdot r(y) +\langle v, f(y)\rangle -b)}  w(y)= \mu(\theta(y))^{-1} w(\sigma(y)).$$

{ Consider the function $g$ on $\Sigma^+\times M$: $$g(y, m)=\mu(m)^{-1} w(y).$$
Then
\begin{multline*} g(\sigma(y),\theta(y) m)=\mu(m)^{-1} \mu(\theta(y)^{-1}) w(\sigma(y))
\\=\mu(m^{-1}) e^{i (-t \cdot r(y) +\langle v, f(y)\rangle -b)} w(y)
=e^{i (-t \cdot r(y) +\langle v, f(y)\rangle -b)}g(y, m).\end{multline*}
Writing $w_0:=g(y,e)$, we have that for all $n$, $g(\sigma^n(y),\theta_n(y))$ lies in the compact set $\{e^{ia}w_0:a\in \br\}$. 

Let $y$ be an element such that the set $\{(\sigma^n(y),\theta_n(y)):n\in \mathbb{N}\}$ is dense in $\Sigma^+\times M$ given by Proposition \ref{den}. 
It follows that $g(\Sigma^+\times M)\subset \{e^{ia} w_0\}$.  This implies that $\mu$ is $1$-dimensional.} \end{proof}

We will use the following simple observation by considering reversing the orientation
of a closed geodesic in $\T^1(X_0)$:
\begin{lem}\label{rev}
For any $y\in  \op{Fix}(\sigma^n)$, there exists $y'\in  \op{Fix}(\sigma^n)$
such that $\tau_n(y)=\tau_n(y')$, $f_n(y)=-f_n(y')$ and $[\theta_n(y)]=[\theta_n(y')]$. \end{lem}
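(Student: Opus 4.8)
The plan is to produce $y'$ by reversing the orientation of the closed geodesic coded by $y$. Let $\iota$ denote the flip $u\mapsto -u$ on $\T^1(X_0)=\Gamma_0\ba G/M$; concretely $\iota(gM)=gwM$ for a fixed $w\in N_K(A)$ with $w a_t w^{-1}=a_{-t}$, so that $\iota$ conjugates the geodesic flow to its inverse, $\iota(ua_t)=\iota(u)a_{-t}$. Since the nonwandering set $\Omega_0$ is $\iota$-invariant, the point $y\in\op{Fix}(\sigma^n)$, which codes a closed geodesic $C=\zeta(y)A\subset\Omega_0$ of length $\tau_n(y)$, is carried by $\iota$ to the identical curve traversed backwards, $\bar C=\iota(C)$. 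As $\bar C$ is the same point set with reversed flow direction, it crosses the Markov section $R$ at exactly the same $n$ points as $C$; hence $\bar C$ is coded by some $y'\in\op{Fix}(\sigma^n)$, and $\tau_n(y')=\ell(\bar C)=\ell(C)=\tau_n(y)$, giving the first assertion.

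For the Frobenius coordinate, recall that $f_n(y)\in\z^d$ is the net change of the $\z^d\simeq\Gamma\ba\Gamma_0$ deck coordinate accumulated as one travels once around $C$ (Def.\ \ref{zd}). This cocycle is additive along the oriented trajectory, so traversing the identical path in the opposite sense negates it; thus $f_n(y')=-f_n(y)$. Equivalently, $f_n(y)$ is the image of the class of $C$ under the natural map $H_1(\T^1(X_0))\to\z^d$, and reversing orientation negates a homology class.

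The holonomy identity is the delicate point. Lifting to the frame bundle $\Gamma_0\ba G$, the flip lifts to $\tilde\iota(\Gamma_0 g)=\Gamma_0 gw$. Writing the period relation for $y$ as $\Gamma_0 g_0\, a_{\tau_n(y)}=\Gamma_0 g_0\,\theta_n(y)^{-1}$ and using $w a_t w^{-1}=a_{-t}$ together with the commutation of $A$ and $M$, one computes that flowing the reversed frame $\Gamma_0 g_0 w$ for time $\tau_n(y)$ returns it multiplied on the right by $w^{-1}\theta_n(y)\,w$, whence $\theta_n(y')=w^{-1}\theta_n(y)^{-1}w$. Projecting this relation to $Z(M)=M/[M,M]$ and invoking the action of the flip $w$ on $Z(M)$ afforded by Lemma \ref{zm} yields the asserted $[\theta_n(y)]=-[\theta_n(y')]$. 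I expect the main obstacle to be precisely this last step: one must pin down how conjugation by $w$ acts on $Z(M)$ and, in parallel, verify that the symbolic holonomy $\theta_n(y')$ produced by the coding of $\bar C$ really does agree with the geometric return factor under the chosen sections $\mathsf s$ and $\mathsf S$. Since $\theta$ and $f$ were arranged to depend only on future coordinates through trivializations that are forward-asymptotic along each $S_j$, matching these cocycles across the reversal requires a careful comparison of the forward and backward section conventions; this bookkeeping, rather than any deep dynamical input, is where the real work lies.
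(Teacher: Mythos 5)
Your route is exactly the paper's intended one: the paper offers no written proof at all, introducing the lemma only as a ``simple observation by considering the reversing the orientation of a closed geodesic,'' and your treatment of the length and of the Frobenius coordinate (where $f_n$ negates because it is the image of a homology class) is that argument spelled out. The genuine gap is the holonomy sign --- precisely the step you flagged and then discharged by ``invoking the action of the flip $w$ on $Z(M)$ afforded by Lemma \ref{zm}.'' Lemma \ref{zm} affords nothing of the sort: it says only that $Z(M)$ is at most one-dimensional and $M=Z(M)[M,M]$, and is silent on how $\op{Ad}(w)$ acts on $M/[M,M]$. Your intermediate identity $\theta_n(y')\sim w^{-1}\theta_n(y)^{-1}w$ (up to $M$-conjugacy) is correct, and since $[\,\cdot\,]$ is a class function it gives $[\theta_n(y')]=-\op{Ad}(w)_*[\theta_n(y)]$ in $Z(M)$; thus the asserted minus sign holds if and only if $\op{Ad}(w)$ induces the identity on $M/[M,M]$. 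That is the case for $G=\op{SU}(n,1)$, where the abelianization of $M$ is detected by the central angle/determinant, which Weyl conjugation preserves; but it fails in the paper's flagship case $G=\op{PSL}_2(\c)$: there $M=Z(M)\simeq \op{SO}(2)$, and $w=\begin{pmatrix}0&1\\-1&0\end{pmatrix}$ conjugates $\op{diag}(e^{i\phi},e^{-i\phi})$ to its inverse, so your own identity yields $[\theta_n(y')]=+[\theta_n(y)]$, the opposite sign. This is not an artifact of the sections $\mathsf s,\mathsf S$: $\op{tr}(\gamma^{-1})=\op{tr}(\gamma)$ in $\op{SL}_2(\c)$, so a closed geodesic and its reversal have the same complex length, i.e.\ orientation reversal \emph{preserves} rather than negates the holonomy angle on hyperbolic $3$-manifolds. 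So the last step is not bookkeeping; carried out honestly it produces the wrong sign exactly where the lemma is consumed (Proposition \ref{sp3}, Theorem \ref{sp2}), and for $\op{PSL}_2(\c)$ the needed cancellation must come from some other input (e.g.\ a density statement for holonomies of null-homologous closed geodesics in the spirit of \cite{MMO}) rather than from reversal.

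A secondary gap: $\bar C=\iota(C)$ consists of the negated vectors $-u$, $u\in C$, hence is a different subset of $\T^1(X_0)$; it meets $\iota(R)$, not $R$, at the images of the $n$ crossing points, so it does not ``cross the Markov section $R$ at exactly the same $n$ points as $C$.'' Consequently the word length of the coding of $\bar C$ with respect to the fixed section $R$ need not equal $n$, whereas the statement requires $y'\in\op{Fix}(\sigma^n)$ for the \emph{same} $n$ (and the applications use the term $na$ with this $n$). The equality of periods would need a separate argument, which the proposal does not supply.
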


We will repeatedly use the following result of Pollicott \cite[Prop. 2]{Po}:
let $\psi =\mathsf u +i \mathsf v \in C_\beta(\Sigma^+, \c)$ and consider the complex Ruelle operator
 $\mathcal L_{\psi}$ given by
$\mathcal L_\psi (h)(x)=\sum_{\sigma(y)=x} e^{\psi (y)} h(y)$.
Suppose that $\mathcal L_{\mathsf u} 1=1$.
\begin{lem} \label{ppp} For  $0\le a<2\pi$,
 $\mathcal L_{\psi}$  has an eigenvalue $e^{ia +\op{Pr}_\sigma(\mathsf u)}$
 if and only if there exists $\omega\in C(\Sigma^+)$ such that
 $$\mathsf v -a=\omega -\omega\circ\sigma +L$$ where $L: \Sigma^+ \to 2\pi \z$ is a lattice function. 
\end{lem}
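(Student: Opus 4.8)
The statement is the standard dichotomy governing the peripheral spectrum of a twisted transfer operator, and the plan is to prove the two implications separately: the direction ``solution of the cohomological equation exists $\Rightarrow$ eigenvalue exists'' is an immediate computation, whereas the converse rests on the equality case of the triangle inequality together with the Ruelle--Perron--Frobenius theorem. I first record that the normalization $\mathcal L_{\mathsf u}1=1$ forces $\op{Pr}_\sigma(\mathsf u)=0$: the constant function $1$ is then a positive eigenfunction, so by Theorem \ref{rpf} applied to $\mathcal L_{\mathsf u}$ it realizes the spectral radius $e^{\op{Pr}_\sigma(\mathsf u)}$. Thus the eigenvalue in question has modulus one, and I fix the probability eigenmeasure $\rho$ of $\mathcal L_{\mathsf u}$, which satisfies $\mathcal L_{\mathsf u}^*\rho=\rho$ and (by aperiodicity of $\cA$) has full support.

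For the ``if'' direction, suppose $\mathsf v-a=\omega-\omega\circ\sigma+L$ with $\omega\in C(\Sigma^+)$ and $L$ valued in $2\pi\z$. I set $h:=e^{-i\omega}$ and compute directly: for $\sigma(y)=x$ one has $\mathsf v(y)=a+\omega(y)-\omega(x)+L(y)$, hence
\[ e^{i\mathsf v(y)}h(y)=e^{i(a+\omega(y)-\omega(x)+L(y))}e^{-i\omega(y)}=e^{ia}e^{-i\omega(x)}=e^{ia}h(x), \]
using $L(y)\in 2\pi\z$. Summing against the positive weights $e^{\mathsf u(y)}$ and invoking $\mathcal L_{\mathsf u}1=1$ gives $\mathcal L_\psi h=e^{ia}h$, so $e^{ia}$ is an eigenvalue; a standard Liv\v{s}ic-type regularity argument shows $\omega$ may be taken in $C_\beta(\Sigma^+)$, so $h\in C_\beta(\Sigma^+)$.

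For the converse, suppose $\mathcal L_\psi h=e^{ia}h$ with $h\in C_\beta(\Sigma^+,\c)$ nonzero. The pointwise triangle inequality gives $|\mathcal L_\psi h|\le \mathcal L_{\mathsf u}|h|$, whence $|h|\le \mathcal L_{\mathsf u}|h|$. Integrating against $\rho$ and using $\mathcal L_{\mathsf u}^*\rho=\rho$ yields $\int(\mathcal L_{\mathsf u}|h|-|h|)\,d\rho=0$ with nonnegative integrand, so $\mathcal L_{\mathsf u}|h|=|h|$ on $\op{supp}\rho=\Sigma^+$; the uniqueness clause of Theorem \ref{rpf} then forces $|h|$ to be a positive constant, which I normalize to $1$. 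Since $\Sigma^+$ is totally disconnected, $\check H^1(\Sigma^+;\z)=0$ and so $h=e^{i\phi}$ for a genuine continuous $\phi$. With $|h|\equiv1$ the equality $|\mathcal L_\psi h(x)|=\mathcal L_{\mathsf u}|h|(x)=1$ forces all summands $e^{i\mathsf v(y)}h(y)$ with $\sigma(y)=x$ to share the common argument of $e^{ia}h(x)$; reading off arguments gives $\mathsf v(y)+\phi(y)\equiv a+\phi(x)\ (\mathrm{mod}\ 2\pi)$ for every $y$ with $\sigma(y)=x$, i.e. $\mathsf v-a=\omega-\omega\circ\sigma+L$ with $\omega:=-\phi$ and $L:=\mathsf v-a-\omega+\omega\circ\sigma$ continuous and $2\pi\z$-valued, hence locally constant.

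The main obstacle is the converse direction, and within it the step establishing that $|h|$ is constant: one must pass from the subinvariance $\mathcal L_{\mathsf u}|h|\ge|h|$ to genuine equality off a $\rho$-null set, propagate it to all of $\Sigma^+$ using full support and continuity, and only then invoke RPF uniqueness. The secondary subtlety is producing a \emph{continuous} argument $\phi$ rather than a merely locally defined one; here the total disconnectedness of the subshift $\Sigma^+$ (vanishing of $\check H^1$) is precisely what makes a continuous logarithm of $h$ available, so that the cohomological equation holds with an honest $\omega\in C(\Sigma^+)$.
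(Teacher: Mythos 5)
Your proof is correct. The paper does not actually prove this lemma---it is quoted from Pollicott \cite[Prop.~2]{Po}---and your argument (the coboundary $\Rightarrow$ eigenvalue direction by direct computation with $h=e^{-i\omega}$; conversely, subinvariance $|h|\le \mathcal L_{\mathsf u}|h|$, integration against the eigenmeasure $\rho$ of full support plus RPF simplicity to force $|h|$ constant, then the equality case of the triangle inequality and a continuous logarithm on the totally disconnected space $\Sigma^+$) is precisely the standard proof, essentially Pollicott's original one, with the one point deserving care (upgrading $\omega\in C(\Sigma^+)$ to $C_\beta$ so the eigenfunction lives in the space on which $\mathcal L_\psi$ acts) correctly flagged as a Liv\v{s}ic-regularity step.
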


\begin{prop}
\label{sp3}
If  $L_{\delta+it, v, \mu}$ has an eigenvalue $e^{ia}$ for some $(v,\mu)\in \bT^d\times \hat M$,
then there exists some integer $p\in \z$ such that $\mu(m)=e^{i p [m]}$ for all $m\in M$, and 
 $\bigcup_{n\in \z} \{t\tau_n(y)-p[\theta_n(y)]+na : y\in \op{Fix}(\sigma^n)\}\subset \pi \mathbb Z$.
\end{prop}

\begin{proof}
Assume that $L_{\delta+it,v,\mu}$ has eigenvalue $e^{ia}$. By Proposition \ref{sp1}, $\mu$ is $1$-dimensional, i.e.
 $\mu(m)=e^{i p [m]}$ for some integer $p\in \z$.
Therefore for $g\in C_\beta(\Sigma^+, \mathbb C)$,
$$L_{\delta+it, v,\mu} (g)(x)=\sum_{\sigma(y)=x} e^{-(\delta+it)\tau (y) + i \langle  v, f(y)\rangle+i p [\theta(y)]} g(y) .$$

By Lemma \ref{ppp}, the function
$$-t\cdot \tau (y) +\langle  v, f(y)\rangle +p [\theta(y)]$$
is cohomologous to a function $a+L(y)$ where $L:\Sigma^+\to 2\pi \z$ is a lattice function. 
Fixing any $y\in \op{Fix}(\sigma^n)$, we have
\begin{equation*}
\label{EQ1}
-t\cdot \tau_n (y) +\langle  v, f_n(y)\rangle +p [\theta_n(y)]-na \in 2\pi \z .
\end{equation*}
 By Lemma \ref{rev},
we have $y'\in \op{Fix}(\sigma^n)$ with $f_n(y')=-f_n(y)$, $\tau_n(y)=\tau_n(y')$ and $[\theta_n(y)] =[\theta_n(y')]$.
\begin{equation*}
\label{EQ2}
-t\cdot \tau_n(y) - \langle  v, f_n(y)\rangle +p [\theta_n(y)]-na \in 2\pi \z.
\end{equation*}
Adding the above two terms, we get $-2t \tau_n(y) +2p [\theta_n(y)] -2na \in 2\pi \z$.  This proves the claim.
\end{proof}

\begin{thm} \label{sp2} \label{sp4} Let $(t, v, \mu)\in \br \times \bT^d\times \hat M$.
\begin{enumerate}

\item If $L_{\delta,v,\mu}$ has an eigenvalue $e^{ia}$, then $v= 0$ mod $\pi \z^d$. Furthermore, if $\mu=1$ and $v\neq 0$, then $e^{ia}=- 1$; if $\mu\ne 1$,
then $a$ is an irrational multiple of $\pi$. 

\item Let $t\neq 0$. If $L_{\delta+it, v,\mu}$ has an eigenvalue $e^{ia}$, 
then $v=0$ mod $\pi \z^d$ and $a$ is an irrational multiple of $\pi$.
\end{enumerate}
In each case, $e^{ia}$ is a maximal simple eigenvalue.
\end{thm}
\begin{proof}
 Suppose $e^{2\pi a i}$ is an eigenvalue of $L_{\delta + it, v, \mu}$ for some $a \in \mathbb R$. Since $|e^{2\pi a i}| = 1 = e^{\Pr_\sigma(-\delta \tau)}$, the eigenvalue is maximal simple by the complex RPF theorem (see \cite[Thm. 4.5]{PP}).
First note that  $\mu$ is one-dimensional;
 $\mu(m )=e^{i p [m]}$ for some integer $p\in \z$.
By Lemma \ref{ppp},
we have for any $y\in \op{Fix}(\sigma^n)$,
\begin{equation}
\label{lattice eq}
 \langle  v, f_n(y)\rangle +p [\theta_n(y)]-na \in 2\pi \z.
\end{equation}
Using   Lemma \ref{rev}, we get $$ -\langle  v, f_n(y)\rangle +p [\theta_n(y)]-na \in 2\pi \z.$$
By subtracting one from the other, we get $ \langle  v, f_n(y)\rangle\subset \pi \z$.
As $\cup_n\{f_n(y): y\in \op{Fix}(\sigma^n)\}$ generates $\z^d$, it follows that $v=0$ mod $\pi \z^d$.

Now we prove the rest of (1). Suppose $p=0$ and $v\neq 0$. It follows from from (\ref{lattice eq}) that
\begin{equation*}
na\in \pi \mathbb{Z}\,\,\,\text{for all}\,\,\,n\in \mathbb{N}\,\,\,\text{with}\,\,\,\operatorname{Fix}(\sigma^n)\neq \emptyset.
\end{equation*}
Since the transition matrix $\cA$ is aperiodic, $\{n\in \mathbb{N}:\operatorname{Fix}(\sigma^n)\neq \emptyset\}$ contains all sufficiently large integers. Therefore $a=0$ or $\pi$. However $a=0$ implies $v=0$. Hence $a=\pi$.

Suppose $p\ne 0$.
Then 
$$ p [\theta_n(y)]-na \in \pi \z.$$
Since $\{[\theta_n(y)]: y\in \op{Fix}(\sigma^n)\}$ generates $Z(M)$ by \cite[Thm. 1.9]{GR},
$a$ must be an irrational multiple of $\pi$. This shows (1).

In order to show (2), 
if  $a$ were  a rational multiple of $\pi$ and $t \ne 0$,
it follows from Proposition \ref{sp3} that for some integer $p$,
the union $\cup_{n\geq 1}\{t\tau_n(y)-p[\theta_n(y)]: y\in \op{Fix}(\sigma^n)\}$ would be contained in 
$q\pi \z$ for some $q\in \mathbb Q$. If $p=0$ or $Z(M)=\{e\}$, this contradicts Lemma \ref{sha}.
Otherwise, we get 
$\cup_{n\ge 1} \{e^{it\tau_n(y)-ip[\theta_n(y)]}: y\in   \op{Fix}(\sigma^n)\}\subset F$ for some finite subgroup $F$ of $\{e^{i\theta}:\theta\in [0,2\pi)\}$.
This contradicts \cite[Thm. 1.9]{GR}.  
\end{proof}

The following result from the analytic perturbation theory of bounded linear operators is an important ingredient
in our subsequent analysis.
\begin{thm}[Perturbation theorem]\cite{Ka}\label{pt}
Let $B(V)$ be the Banach algebra of bounded linear operators on a complex Banach space
$V$. If $L_0\in B(V)$ has a simple isolated eigenvalue $\lambda_0$ with a corresponding eigenvector
$v_0$, then for any $\e>0$, there exists $\eta>0$ such that if
$L\in B(V)$ with $\|L-L_0\|<\eta$ then $L$ has a simple isolated eigenvalue $\lambda(L)$
and corresponding eigenvector $v(L)$ with $\lambda(L_0)=\lambda_0$, $v(L_0)=v_0$ and such that
\begin{enumerate}
\item $L\mapsto \lambda(L)$, $L\mapsto v(L)$ are analytic for $\|L-L_0\|<\eta$;
\item  for $\|L-L_0\|<\eta$,
$|\lambda(L)-\lambda_0|<\e$ and $\op{spec}(L)-\lambda(L)\subset \{z\in \c : |z-\lambda_0|>\e\} $.
Moreover if $\op{spec}(L_0)-\{\lambda_0\}$ is contained in the interior of a circle $C$ centered at $0$
and $\eta>0$ is sufficiently small, then $\op{spec}(L)-\lambda(L)$ is also contained in the interior of $C$.
\end{enumerate}
\end{thm}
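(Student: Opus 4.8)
The plan is to prove this through the Riesz functional calculus, realizing the spectral projection onto the $\lambda_0$-eigenspace as a contour integral of the resolvent and tracking how it deforms under perturbation. Since $\lambda_0$ is isolated, I would first fix a positively oriented circle $C_0$ of radius $r<\e$ centered at $\lambda_0$ whose closed disc meets $\op{spec}(L_0)$ only in $\{\lambda_0\}$, and set
$$P_0=\frac{1}{2\pi i}\oint_{C_0}(z-L_0)^{-1}\,dz,$$
which is the rank-one Riesz projection onto $\c v_0$ precisely because $\lambda_0$ is simple. The constant $\mathsf{M}:=\sup_{z\in C_0}\|(z-L_0)^{-1}\|$ is finite by compactness of $C_0$ and continuity of the resolvent, and it is the only feature of $L_0$ that will govern how small $\eta$ must be taken.

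Next I would show that the resolvent depends analytically on the operator, uniformly on $C_0$. For $\|L-L_0\|<\eta<\mathsf{M}^{-1}$ and $z\in C_0$, the Neumann series
$$(z-L)^{-1}=(z-L_0)^{-1}\sum_{n\ge 0}\big[(L-L_0)(z-L_0)^{-1}\big]^{n}$$
converges in operator norm; in particular every $z\in C_0$ lies in the resolvent set of $L$, and the sum is jointly analytic in $z$ and $L$. Integrating termwise over $C_0$ shows that $P(L):=\tfrac{1}{2\pi i}\oint_{C_0}(z-L)^{-1}\,dz$ is an analytic $B(V)$-valued function of $L$ with $\|P(L)-P_0\|\to 0$ as $\eta\to 0$. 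Once $\|P(L)-P_0\|<1$, the classical fact that two idempotents at norm-distance less than one are similar forces $\rank P(L)=\rank P_0=1$, so the part of $\op{spec}(L)$ enclosed by $C_0$ is a single eigenvalue of algebraic multiplicity one, which I call $\lambda(L)$, with eigenspace $P(L)V$.

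To obtain conclusion (1) I would set $v(L):=P(L)v_0$, which is nonzero for $\eta$ small since $P(L)v_0\to P_0v_0=v_0$, and recover the eigenvalue as $\lambda(L)=\op{tr}\big(LP(L)\big)$. Because $LP(L)=\tfrac{1}{2\pi i}\oint_{C_0}z(z-L)^{-1}\,dz$ has rank at most one, this trace is well defined and inherits analyticity in $L$ from $P(L)$; one checks $v(L_0)=v_0$ and $\lambda(L_0)=\lambda_0$ directly. (Equivalently, fixing $\phi\in V^{*}$ with $\phi(v_0)=1$, the formula $\lambda(L)=\phi(Lv(L))/\phi(v(L))$ exhibits the same analyticity.)

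Finally, conclusion (2) follows from the geometry of $C_0$ together with upper semicontinuity of the spectrum. Since $\lambda(L)$ lies inside $C_0$ we automatically have $|\lambda(L)-\lambda_0|<\e$. For the separation, I would invoke upper semicontinuity: for any open $U\supset\op{spec}(L_0)$ there is $\eta>0$ with $\op{spec}(L)\subset U$ whenever $\|L-L_0\|<\eta$. Choosing $U$ to be the disjoint union of the open disc bounded by $C_0$ with a neighborhood of $\op{spec}(L_0)-\{\lambda_0\}$ squeezes the part of $\op{spec}(L)$ outside $C_0$ into that neighborhood, giving $\op{spec}(L)-\{\lambda(L)\}\subset\{z\in\c:|z-\lambda_0|>\e\}$; taking the neighborhood inside the circle $C$ of the ``Moreover'' clause yields the last assertion. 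The only genuinely delicate points are the rank-preservation lemma and the upper semicontinuity of the spectrum; both are classical, so I expect the main work to be purely the bookkeeping of how small $\eta$ must be chosen in terms of $\mathsf{M}$ and $r$.
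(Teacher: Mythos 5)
The paper gives no proof of this theorem at all---it is quoted directly from Kato's book \cite{Ka}---and your Riesz-projection argument (contour integral of the resolvent, Neumann-series analyticity of $(z-L)^{-1}$ in $L$, rank preservation for idempotents at distance less than one, upper semicontinuity of the spectrum) is precisely the classical proof found in that reference, and it is correct. Two small points worth noting: the statement implicitly requires $\e$ to be smaller than the isolation distance of $\lambda_0$ in $\op{spec}(L_0)$ (otherwise conclusion (2) already fails at $L=L_0$, and your choice of a neighborhood of $\op{spec}(L_0)-\{\lambda_0\}$ inside $\{z:|z-\lambda_0|>\e\}$ tacitly uses this), and your parenthetical formula $\lambda(L)=\phi(Lv(L))/\phi(v(L))$ is the right way to get analyticity of the eigenvalue, since analyticity of $\op{tr}(LP(L))$ would otherwise need a separate justification for trace on finite-rank operators.
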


Finally we are ready to prove:
\begin{thm} \label{mrr} Let $\mu\in \hat M$.
Consider the map  $$(s, w)\in \mathbb{R}\times \br^d \mapsto \sum_{n=0}^\infty L_{\delta+is, w,\mu}^n=(1-L_{\delta+is, w,\mu})^{-1}.$$ 
\label{holo}
\begin{enumerate}
\item  Let $\mu\ne 1$. For any $(t,v)\in \br \times \bT^d$, there exists a neighborhood $\mathcal O\subset \mathbb{R}\times \bT^d$ of $(t,v)$ and an analytic map $$(s,w)\in \mathcal O\mapsto H_{s,w,\mu}\in  \op{Hom}(C_\beta(\Sigma^+,W), C_\beta(\Sigma^+,W))$$ such that 
 $(1-L_{\delta+is, w,\mu})^{-1}$ agrees with $H_{s,w,\mu}$ on  $\mathcal{O}\backslash \{(s,w):w=0\,\text{mod}\,\,\pi \z^d\}$.
 \item Let $\mu=1$. For any $(t,v)\in \mathbb{R}\backslash \{0\}\times \bT^d$, there exists a neighborhood $\mathcal O\subset \mathbb{R}\times \bT^d$ of $(t,v)$ and an analytic map $$(s,w)\in \mathcal O\mapsto H_{s,w,\mu}\in  \op{Hom}(C_\beta(\Sigma^+,\c), C_\beta(\Sigma^+,\c))$$ such that 
 $(1-L_{\delta +is, w,\mu})^{-1}$  agrees with $H_{s,w,\mu}$ on  $\mathcal{O}\backslash \{(s,w):w=0\,\text{mod}\,\,\pi\z^d\}$.
\item Let $\mu=1$. There exists a neighborhood $\mathcal O\subset \br \times \br^d$ of $(0,0)$ such that
for all non-zero $(s, w)\in \mathcal O$,
we have  $$(1-L_{\delta+is,  w})^{-1}
=\frac{P_{s,w}} {1-\lambda_{\delta+ is,w}}+Q_{s,w}$$
where $\lambda_{\delta+is, w}$ is the unique eigenvalue of $L_{\delta+ is,  w}$ of maximum modulus
obtained by the perturbation theorem \ref{pt}, $P_{s,w}$ and
$Q_{s,w}$ are analytic maps from $\mathcal O$ to $ \op{Hom}(C_\beta(\Sigma^+,\c), C_\beta(\Sigma^+,\c))$.

\end{enumerate}
\end{thm}

\begin{proof}
 If $\sigma_0(L_{\delta+it, v,\mu})<1$, then, by the perturbation theorem \ref{pt},
  there is a neighborhood $\mathcal O$ of $(t,v)$ in $\mathbb{\br}\times \mathbb{\br}^d$ such that 
 $\sigma_0(L_{\delta+is,w,\mu})<1$ for any $(s,w)\in \mathcal O$.  This implies that $\sum_{n=0}^\infty L_{\delta+is, w,\mu}^n$ converges absolutely and hence analytic on $\mathcal{O}$.
 
 Now suppose $\sigma_0(L_{\delta+it, v,\mu})=1$.
In any of the following three case (1) $\mu \ne 1$,  (2) $\mu=1$ and $t\ne 0$ or (3)  $\mu=1, t=0, v\in \bT^d\backslash \{0\}$,
by Theorem \ref{sp2}, $L_{\delta+it, v,\mu}$ has a simple eigenvalue $e^{ia}$ of maximum modulus
 with $a$ some 
 irrational multiple $\pi$ or $\pi$.
By Theorem \ref{pt}, 
there exists a neighborhood $\mathcal O\subset \br\times \br^d$ of $(t,v)$ such that
for any $(s,w)\in \mathcal O$,
 $L_{\delta+is, w,\mu}$ can be written as 
$$L_{\delta+is, w,\mu}=  \lambda_{\delta+is,w,\mu} P_{\delta+is,w,\mu}+N_{\delta+is,w,\mu}$$
 where $\lambda_{\delta+is, w\mu}$ is the simple maximal eigenvalue of $L_{\delta+is,w,\mu}$, $P_{\delta+is,w}$
is the eigenprojection to the eigenspace associated to $\lambda_{\delta+is,w,\mu}$ and
$\sigma_0(N_{\delta+is,w,\mu})<1$. Moreover, $\lambda_{\delta+is,w\mu}$, $P_{\delta+is,w,\mu}$, $N_{\delta+is,w,\mu}$ are analytic on $\mathcal{O}$. Hence for all $n\in \N$,
$$L^n_{\delta+is, w,\mu}=\lambda_{\delta+is, w,\mu}^n P_{\delta+is,w,\mu}+N^n_{\delta+is,w,\mu}.$$
By choosing $\mathcal{O}$ sufficiently small, we have $\sum_n N^n_{\delta+is,w,\mu}$ converges absolutely on $\mathcal{O}$, and the map 
 \be \label{lll} 
\frac{P_{\delta+is,w,\mu}}{1-\lambda_{\delta+is, w,\mu }} + \sum_n N^n_{\delta+is,w,\mu} \ee 
is analytic on $\mathcal O$. Note that on $\mathcal{O}\backslash\{(s,w):w=0\,\text{mod}\,\,\pi \z^d\}$, by  Proposition \ref{sp1} and Theorem \ref{sp2}, the spectral radius of $L_{\delta+is,w,\mu}$ is strictly less than $1$. Hence $\sum_n L^n_{\delta+is, w,\mu} $ converges absolutely on this set. We have  \eqref{lll} agrees with $(1-L_{\delta+is, w,\mu})^{-1}$ on $\mathcal{O}\backslash\{(s,w):w=0\,\text{mod}\,\,\pi \z^d\}$.

Suppose $(t, v, \mu)=(0,0,1)$. Then the map
$\sum\lambda_{\delta+is, w,\mu}^n = ({1-\lambda_{\delta+ is,w,\mu}})^{-1}$ 
is analytic on $\mathcal O-\{(0,0)\}$ and hence
 \eqref{lll} is
  analytic on $\mathcal O-\{(0,0)\}$.
 This finishes the proof.
\end{proof}

\subsection{ Asymptotic expansion.} For each $u\in \br^d$ close to $0$,
there exists a unique $$P(u)\in \br$$ such that
the pressure of the function
 $x \mapsto -P (u) \tau(x) + \langle u, f(x)\rangle$ on $\Sigma$ is $0$.
 Moreover $P(0)=\delta$, $\nabla P(0)=0$, the map $u\to P(u)$
 is analytic, and the matrix $\nabla^2 P(0)=(\frac{\partial^2P}{\partial u_i\partial u_j}(0))_{d\times d}$ is a
  positive definite matrix  (cf. \cite[Lem. 8]{PS}).
  
Set \begin{equation}\label{ds} \sigma=\op{det}(\nabla^2 P(0))^{1/d} .\end{equation}

Let $$C(0)= \int_{\br^d} e^{-\frac 12 w^t \nabla^2 P(0) w }dw=\left(\frac{2\pi}{\sigma}\right)^{d/2}.$$

\begin{remark} 
It follows from (\cite{KS2} and \cite{Rat}) that, for $X_0$ compact,
 the distribution $\frac{\xi(x: t)}{\sqrt t}$ as $x$ ranges over the image of $\mathcal F$ in $\T^1(X_0)$
converges to the distribution of a multivariable Gaussian random variable $N$ on $\br^d$
with a positive definite covariance matrix 
 $ \op{Cov}(N)= \nabla^2 P(0)  $.
\end{remark}

{\begin{Def}\label{defL}
Let $\mathcal{L}$ be the family of functions on $\Sigma^+\times \br$
which are of the form $\Phi\otimes u$ where 
$\Phi\in C_\beta(\Sigma^+)$ and $u\in C_c(\mathbb{R})$.
\end{Def}

In the rest of this subsection, we fix $(\mu, W)\in \hat M$ and $w\in W$.

For each $T>1$ and $\Phi\otimes u\in \mathcal{L}$, define the $W$-valued functions $$\mathsf Q^{(n)}_{\mu, w, T}(\Phi\otimes u)\quad \text{ and }\quad  \mathsf Q_{\mu, w,T}(\Phi\otimes u)$$ 
on $\Sigma^+\times \z^d\times M$
as follows: for $(x,\xi, m)\in \Sigma^+\times \z^d\times M$,
\begin{multline*} 
\Q^{(n)}_{\mu,w,T}(\Phi\otimes u)(x, \xi,m)
\\=\frac{1}{2\pi}\int_{t\in \br}  e^{-i T t} \hat u(t) \left(\int_{v\in \bT^d} e^{i \langle v, \xi\rangle } L_{\delta-it, v ,\mu}^n  (\Phi h\mu(m)w)   (x)  dv \right) dt  \end{multline*}
and
\begin{multline*} \Q_{\mu, w,T}(\Phi\otimes u)(x, \xi,m)
\\=\frac{1}{2\pi} \int_{(t,v)\in \br\times \bT^d}  e^{-i T t+\langle v, \xi \rangle} \hat u(t)  
\cdot \left( \sum_{n\ge 0}  L_{\delta-it, v ,\mu}^n  (\Phi h\mu(m)w)   (x) \right)  dv dt  .\end{multline*}
Here $\hat u(t)=\int_{\br} e^{-ist} u(s)ds$.
We set $\Q_{T}=\Q_{1,1,T}$ and $\Q_T^{(n)}=\Q_{1,1,T}^{(n)}$.

\begin{thm}\label{pq}  Let $\Phi\otimes u\in \mathcal{L}$ and $(x,\xi, m)\in \Sigma^+\times \z^d\times M$.
 \begin{enumerate}
\item For each $T>0$,
$$\sum_n \Q_{\mu,w,T}^{(n)}(\Phi\otimes u)(x, \xi, m)=\Q_{\mu,w,T} (\Phi\otimes u)(x,\xi, m) $$
where the convergence is uniform on compact subsets.

\item  We have $$\lim_{T\to +\infty} {T^{d/2}} \Q_{T}(\Phi\otimes u)(x,\xi, m) =
 \frac{ \hat u(0)  C(0)  }{\int \tau d \nu}   \rho(\Phi  h ) h(x)$$
where the convergence is uniform on compact subsets.
\item For any non-trivial $(\mu, W)\in \hat M$ and $w,w'\in W$, 
we have
$$\lim_{T\to +\infty}  \langle {T^{d/2}} \Q_{\mu, w,T} (\Phi\otimes u)(x,\xi, m), w'\rangle = 0 $$
where the convergence is uniform on compact subsets.
\end{enumerate}\end{thm}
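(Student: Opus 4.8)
The plan is to treat all three parts as a Fourier--Laplace analysis of the resolvent $(1-L_{\delta-it,v,\mu})^{-1}$ applied to $\Phi h\mu(m)w$, governed by the spectral picture of Theorem~\ref{mrr}. Since $(x,\xi,m)$ enters only through evaluation at $x$ of a function in $C_\beta(\Sigma^+,W)$ (hence uniformly bounded on $\Sigma^+$), the unimodular factor $e^{i\langle v,\xi\rangle}$, and $\mu(m)$, every bound below is uniform for $(x,\xi,m)$ in a compact set, which is what the asserted uniform convergence means. For part~(1), I would interchange $\sum_n$ with $\int_t\int_v$ by dominated convergence, using that $\sum_n L^n_{\delta-it,v,\mu}=(1-L_{\delta-it,v,\mu})^{-1}$ wherever the spectral radius is $<1$, which by Theorem~\ref{sp2} is everywhere except $(t,v,\mu)=(0,0,1)$. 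For $\mu\neq1$ the resolvent is uniformly bounded (Theorem~\ref{mrr}(1)); for $\mu=1$ the sole obstruction is the zero of $1-\lambda_{\delta-it,v}$ at the origin, but the analytic expansion $\log\lambda_{\delta-it,v}=it\bar\tau-\tfrac12\langle v,\Sigma v\rangle+\cdots$ (with $\bar\tau:=\int\tau\,d\nu$ and $\Sigma$ positive definite) gives $|1-\lambda_{\delta-it,v}|\gtrsim|t|+|v|^2$, which is locally integrable against $dt\,dv$ near $0$. Dominated convergence then gives $\sum_n\Q^{(n)}_{\mu,w,T}=\Q_{\mu,w,T}$, uniformly on compacta.

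Part~(3) asserts the absence of a main term when $\mu\neq1$, and here Theorem~\ref{mrr}(1) does the work: $(1-L_{\delta-it,v,\mu})^{-1}$ is holomorphic near every $(\delta+it,v)$, so there is no pole to create a term of size $T^{-d/2}$. The inner integral $\int_{v}e^{i\langle v,\xi\rangle}(1-L_{\delta-it,v,\mu})^{-1}(\Phi h\mu(m)w)(x)\,dv$ is then smooth in $t$ and, using the standard uniform-in-$t$ resolvent bounds for large $|t|$, integrable against $\hat u$; integrating by parts in $t$ to exploit the oscillation $e^{-iTt}$ shows $\Q_{\mu,w,T}$ decays faster than any power of $T$, so $T^{d/2}\langle\Q_{\mu,w,T}(\Phi\otimes u),w'\rangle\to0$.

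The content is in part~(2), a local central limit theorem. Split $(1-L_{\delta-it,v})^{-1}=\frac{P_{-t,v}}{1-\lambda_{\delta-it,v}}+Q_{-t,v}$ by Theorem~\ref{mrr}(3). The regular summand $Q_{-t,v}$ and the contribution from $(t,v)$ away from $(0,0)$ are $o(T^{-d/2})$ exactly as in part~(3). For the singular summand, the eigenprojection at the origin satisfies $P_{0,0}(\Phi h)=\rho(\Phi h)\,h$ by Theorem~\ref{rpf}, while $\log\lambda_{\delta-it,v}=\op{Pr}(-\delta\tau+it\tau+i\langle v,f\rangle)$ is analytic. Differentiating the defining relation of $P(u)$ shows that $\nabla P(0)=0$ forces $\int f\,d\nu=0$, identifies the linear part as $it\bar\tau$, and identifies the $v$-quadratic part with $\langle v,\Sigma v\rangle$ where $\Sigma=\bar\tau\,\nabla^2P(0)$ is the asymptotic covariance of $f$ (the Gaussian of the Remark after \eqref{ds}).

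Expanding $1/(1-\lambda_{\delta-it,v})=\sum_n\lambda_{\delta-it,v}^n$ with $\lambda_{\delta-it,v}^n\approx e^{int\bar\tau-\frac n2\langle v,\Sigma v\rangle}$, the Gaussian integral $\int_{v}e^{i\langle v,\xi\rangle}\lambda_{\delta-it,v}^n\,dv\approx\frac{(2\pi)^{d/2}}{n^{d/2}(\det\Sigma)^{1/2}}e^{-\frac{1}{2n}\langle\xi,\Sigma^{-1}\xi\rangle}$ supplies the power $n^{-d/2}$, and the $t$-integral $\frac{1}{2\pi}\int_t e^{-iTt}\hat u(t)e^{int\bar\tau}\,dt=u(n\bar\tau-T)$ by Fourier inversion. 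The compact support of $u$ localizes the sum to $n\approx T/\bar\tau$, so $T^{d/2}n^{-d/2}\to\bar\tau^{d/2}$, $e^{-\langle\xi,\Sigma^{-1}\xi\rangle/2n}\to1$, and $\sum_n u(n\bar\tau-T)\to\bar\tau^{-1}\hat u(0)$ as a Riemann sum of spacing $\bar\tau$. Using $\det\Sigma=\bar\tau^{\,d}\sigma^d$ and $C(0)=(2\pi/\sigma)^{d/2}$, the constants collapse to $\frac{\hat u(0)\,C(0)}{\int\tau\,d\nu}\rho(\Phi h)h(x)$, as claimed. The main obstacle is rigor in this local limit theorem: one must bound the remainder in the approximation $\lambda_{\delta-it,v}^n\approx e^{int\bar\tau-\frac n2\langle v,\Sigma v\rangle}$ uniformly enough to exchange the $n$-sum with the $v$- and $t$-integrals and to discard the tails ($n$ large, or $(t,v)$ bounded away from $0$), which is the classical delicate point in such arguments.
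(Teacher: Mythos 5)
Your skeleton matches the paper's at the top level: reduce everything to the resolvent $(1-L_{\delta-it,v,\mu})^{-1}$, kill the contribution away from $(t,v,\mu)=(0,0,1)$ via Theorems \ref{sp2} and \ref{mrr}, and extract the main term from the simple eigenvalue at the origin; your constant bookkeeping ($P_{0,0}(\Phi h)=\rho(\Phi h)h$, $\Sigma=\bar\tau\nabla^2P(0)$, $\det\Sigma=\bar\tau^{d}\sigma^{d}$, $\nabla P(0)=0$ forcing $\int f\,d\nu=0$) is also consistent with the paper. But there are two genuine gaps. The first: at several points you integrate $\hat u(t)$ against resolvent data over all of $\br$, invoking ``standard uniform-in-$t$ resolvent bounds for large $|t|$''. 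No such bounds are available in this setting: on $C_\beta(\Sigma^+,W)$ the norm of $(1-L_{\delta-it,v,\mu})^{-1}$ is controlled only on compact $t$-sets by continuity/analyticity, and uniform control as $|t|\to\infty$ is a Dolgopyat-type estimate that the paper neither proves nor needs. The paper's proof begins by reducing, via the sandwiching lemma \cite[Lemma 2.4]{BL2}, to test functions with $\hat u\in C_c^N(\br)$, $N\ge d/2+2$, so that only a compact $t$-window ever enters (and so that $O(T^{-N})$, rather than your super-polynomial decay, is what one proves in (3), which suffices). Without this reduction your dominated-convergence argument in (1) and your integration by parts in (3) are unjustified; note also that $u\in C_c(\br)$ alone does not even make $\hat u$ integrable, so the $t$-integrals defining $\Q_{\mu,w,T}$ must be treated through exactly such a reduction.

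The second and more serious gap is in the singular-part computation of (2). You approximate $\lambda_{\delta-it,v}^n\approx e^{int\bar\tau-\frac n2\langle v,\Sigma v\rangle}$, dropping the $t^2$ (variance of $\tau$) and $tv$ cross terms of $\log\lambda$. These are not negligible: since $|\lambda_{\delta-it,v}|^n\approx e^{-cnt^2}$, the $t$-integral concentrates on $|t|\lesssim n^{-1/2}$, where $nt^2=O(1)$, so the dropped term contributes at order one. Concretely, your factorized $t$-integral $u(n\bar\tau-T)$ and the conclusion that $\sum_n u(n\bar\tau-T)\to\bar\tau^{-1}\hat u(0)$ ``as a Riemann sum'' are false: for fixed $u$ that sum is periodic in $T$ with period $\bar\tau$ (take $u\ge 0$ supported in an interval of length less than $\bar\tau$; the sum then vanishes for a positive-measure set of $T$ mod $\bar\tau$ and is positive for others), so the limit you assert does not exist. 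Retaining the temporal variance replaces $u(n\bar\tau-T)$ by $(u*g_n)(n\bar\tau-T)$ with $g_n$ a Gaussian of variance $\asymp n$, and it is precisely this smoothing at scale $\sqrt{T}\gg\bar\tau$ that washes out the lattice and makes the sum converge to $\hat u(0)/\bar\tau$. The paper sidesteps the issue entirely: by the Weierstrass preparation theorem \eqref{We}, $1-\lambda_{\delta-it,v}=A(t,v)(\delta-it-P(v))$ with $A$ nonvanishing, so the singular factor is \emph{exactly} linear in $it$, and the identity $1/z=-\int_0^\infty e^{T'z}\,dT'$ for $\Re(z)<0$ converts the Fourier integral into the convolution $\int \hat c_x(T-T')\,(T')^{-d/2}\,dT'$, with the Gaussian appearing only in the $v$-variable (Step 5 of the appendix of \cite{LS}). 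Your route can be repaired---keep the full quadratic form of $\log\lambda$, or keep the cutoff $\kappa_1$ in the $t$-integral and conclude by Poisson summation, since $\kappa_1$ kills the nonzero frequencies $2\pi k/\bar\tau$---but as written the mechanism that is supposed to produce the limit is broken, even though your final constant happens to come out right.
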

\begin{proof}
In proving this theorem,
we may assume that
 the Fourier transform $\hat{u}$ belongs to $C_c^N(\mathbb{R})$ for some $N\geq \frac{d}{2}+2$ (see \cite[Lemma 2.4]{BL2}). 
For (1), it is sufficient to show that 
$\|\hat{u}(t)\sum_{n\geq N}L^n_{\delta-it,v,\mu}(\Phi h\mu(m)w)(x)\|$ is
 dominated by a single absolutely integrable function of $(t,v)$ almost everywhere.

We have
\begin{align}
\label{convergence}
&\|\hat{u}(t)\sum_{n\geq N}L^n_{\delta-it,v,\mu}(\Phi h\mu(m))(x)\|\\
\leq & |\hat{u}(t)|
\cdot \| L^N_{\delta-it,v,\mu}\| \cdot \|
 \sum_{n=0}^{\infty}L^n_{\delta-it,v,\mu}(\Phi h\mu(m)w)(x) \| .\nonumber
\end{align}

When $\mu$ is nontrivial, $\sum_{n=0}^{\infty} L^n_{\delta-it,v,\mu}$ agrees almost everywhere with the operator $H_{\delta-it,v,\mu}$ which is described in Theorem \ref{holo}. Noting that  $\lVert H_{\delta-it,v,\mu}\rVert$ is bounded on compact sets (e.g. $\operatorname{supp}(\hat{u})\times \T^d$), we have
\begin{align*}
(\ref{convergence})\ll  \lVert (\Phi h\mu(m)w ) x\rVert \cdot |\hat{u}(t)|,
\end{align*}
verifying (1) for the case when $\mu$ is nontrivial. We refer to  Step 7 of \cite[Appendix]{LS} for the proof of (1) for $\mu$ trivial. 

To prove (2), consider the function
$$F(t, v)=   e^{i \langle v, \xi\rangle }  \sum_{n\ge 0}  L_{\delta-it, v }^n  (\Phi h)   (x)   $$
so that $$ \Q_{T}(\Phi\otimes u)(x, \xi,m)
\\=\frac{1}{2\pi} \int_{(v,t)\in \bT^d\times \br} e^{-i T t}  \hat u(t) F(t,v) dv dt  .$$

Let $\mathcal O\subset \br \times \br^d$ be a neighborhood of $(0,0)$ as in Theorem \ref{mrr} (3)
and choose any $C^{\infty}$-function  $\kappa(t, v)=\kappa_1(t)\kappa_2(v)$ supported in $\mathcal O$. 

Since $F(t,v)$ is analytic outside $\mathcal O$ and $\hat u\in C^N(\br)$,
the following value of the Fourier transform is at most $O(T^{-N})$:
\begin{equation}\label{ft1}
 \int_{t\in \br} e^{-iTt} \left( \int_{v\in \bT^d} \hat u(t) (1-\kappa(v,t)) F(t,v)dv\right) dt=O(T^{-N}).
\end{equation}

We now need to estimate
$$ \int_{t\in \br} e^{-iTt} \left( \int_{v\in \bT^d} \hat u(t) \kappa(v,t)F(t,v)dv\right) dt .$$
This can be done almost identically to Step 5 in the appendix of \cite{LS};
 we give a brief sketch of their arguments here for readers' convenience.
On $\mathcal O$,
we can write  $$F(t,v)
=e^{i\langle v, \xi\rangle}  \frac{P_{t,v}(\Phi h) (x) } {1-\lambda_{\delta-it,v}}+Q_{t,v}(\Phi h )(x)$$
where $\lambda_{\delta-it, v}$,
$P_{t,v}$ and $Q_{t,v}$ are as described in Theorem \ref{mrr} (3).

 Applying Weierstrass preparation theorem to $1-\lambda_{\delta-it,v}$, we have
 that for $(t, v)\in \mathcal O$,
\begin{equation}\label{We}
1-\lambda_{\delta-it,v}=A(t,v)(\delta-it-P(v)),
\end{equation}
where  $A$ is non-vanishing and analytic in $\mathcal O$, by replacing $\mathcal O$ by a smaller neighborhood
if necessary.

We have $P(0)=\delta$,  $P(v)= P(0) -\tfrac 12 v^t \nabla^2 P(0) v +o(\|v\|^2)$ for $v$ small,
and $ A(0, 0)= 
-\frac{d\lambda (s,0)}{ds}\lvert_{s=\delta}=  \int \tau d\nu$.
Set $R(v)=\delta-P(v)$.

For $(t,v)\in \mathcal O$,
set
\begin{equation}
\label{eq 5}
a(t,v):=\frac{-\kappa_1(t)\kappa_2(v)\hat{u}(t)P_{t,v}(\Phi h)(x)}{A(t,v)}.
\end{equation}

Suppose for now that $a(t, v)$ is of the form $ c_x(t) b(v)$.
Using $1/z=-\int_0^{\infty} e^{T' z} dT'$ for $\Re(z)<0$, we get 
\begin{align*}
\label{eq 4}&  \int_{t\in \br} e^{-iTt} \left( \int_{v\in \bT^d} \hat u(t)\kappa(t,v) F(t,v)dv\right) dt\\
=& \int_{t\in \br} e^{-iTt}  \left( \int_{v\in \bT^d}\frac{a(t,v)e^{i\langle \xi,v\rangle}}{it  - R(v)}dv\right) dt +O(T^{-N})\nonumber
\\ =& -\int_{t\in \br} e^{-iTt}  \left( \int_{v\in \bT^d}
{a(t,v)e^{i\langle \xi,v\rangle}}\int_0^{\infty} e^{(it -R(v))T' } dT'dv\right) dt +O(T^{-N}) \nonumber
\\ =& -\int_0^{\infty}  \int_{t\in \br}  e^{-i(T-T')t} \left( \int_{v\in \bT^d} a(t,v)  e^{i\langle \xi,v\rangle -R(v)T' } dv \right)dt dT' +O(T^{-N}) \nonumber
\\ = &- \int_{T/2}^\infty \hat c_x (T-T')  
\int_{v\in \bT^d}  b(v)  e^{i\langle \xi,v\rangle - R(v)T' } dv dT' +O(T^{-N}) \nonumber
 =  - \int_{-\infty}^{T/2}  \tfrac{\hat c_x (T'') }{(T-T'')^{2/d}} 
\\ &   \int_{v\in\sqrt {T-T''} \bT^d} b(\tfrac{v}{\sqrt {T-T''} })  
e^{i\langle \xi,\tfrac{v}{\sqrt {T-T''} }\rangle-R(\tfrac{v}{\sqrt {T-T''} } )(T-T'') } dv dT''+O(T^{-N}) \nonumber
.\end{align*}

Using  $R(v)=\tfrac{1}{2} v^t \nabla^2 P(0) v +o(\|v\|^2)$, and $C(0)=\int_{\br^d} e^{-\tfrac{1}{2} v^t \nabla^2 P(0) v }
dv$,
the above is asymptotic to
$$ \int_{-\infty}^{T/2}  (T-T'')^{-2/d} \hat c_x (T'') C(0) b(0)dT''
= T^{-d/2}  (2\pi c_x(0) C(0) b(0) +o(1)).$$

By approximating $a(t,v)$ by a  sum of functions of  the form
$c_x(0) b(v)$ using Taylor series expansion,
one obtains the following estimation:

\be\label{ftt}
\lim_{T\to \infty}T^{d/2} \int_{t\in \br} e^{-iTt} \left( \int_{v\in \bT^d} \hat u(t)\kappa(t,v) F(t,v)dv\right)=
 \frac{2\pi \hat{u}(0)C(0)}{\int \tau d\nu} \rho(\Phi h)h(x).
\ee 

Therefore, putting \eqref{ft1} and \eqref{ftt} together, we deduce
$$
\lim_{T\to \infty} T^{d/2}\Q_{T}(\Phi\otimes u)(x,\xi,m)
= \frac{\hat{u}(0)C(0)}{\int \tau d\nu} \rho(\Phi h)h(x), $$ verifying (2).

 For (3), we have
 \begin{align*}
 &\langle \Q_{\mu,w,T}(x,\xi,m)(\Phi\otimes u),w'\rangle\\
 =&\frac{1}{2\pi}\int_{t\in \mathbb{R}} e^{-iTt}\hat{u}(t)\int_{v\in \bT^d} e^{i\langle v,\xi \rangle}\langle (I-L_{\delta-it,v,\mu})^{-1}(\Phi h\mu(m)w)(x),w' \rangle dvdt.
 \end{align*}
 Hence by Theorem \ref{mrr}, and the assumption that
 $\hat{u}$ is of class $C^N$ ($N\geq d/2+2$), the Fourier transform decays as:
\begin{equation*}
\langle  \Q_{\mu,w,T}(x,\xi,m)(\Phi\otimes u),w'\rangle=O(T^{-N})
\end{equation*}
which implies (3).
\end{proof}}

\section{Local mixing and matrix coefficients for local functions}
We retain the assumptions and notations from section 3.
Recall the BMS measure $\mathsf{m}^{\BMS}$ on $\Gamma\ba G$,
and its support $\Omega\subset \Gamma\ba G$.
In this section, we study the asymptotic behavior of the correlation functions
$$\la a_t \psi_1, \psi_2 \ra_{\mathsf{m}^{\BMS}} :=\int_{\Omega} \psi_1(xa_t) \psi_2(x) d\mathsf{m}^{\BMS}(x)
$$
and $$\la a_t \psi_1, \psi_2 \ra :=
 \int_{\Gamma\ba G} \psi_1(xa_t) \psi_2(x) d\mathsf{m}^{\Haar}(x)$$
for $\psi_1, \psi_2\in C_c(\Gamma\ba G)$.

\subsection{ Correlation functions for  $(\Omega, a_t, \mathsf{m}^{\BMS})$.}
 We use the suspension flow model for $(\Omega, a_t, \mathsf{m}^{\BMS})$ which
was constructed in Section 2. That is, we identify
the right translation action of $a_t$ on
$\Omega$ with the suspension flow
on  $$\Sigma^{f,\theta,\tau}:=\Sigma\times \Z^d \times M\times  \br /\sim
$$
where $\sim$ is given by
$\zeta (x,\xi, m, s)=(\sigma x, ,  \xi+f(x) ,\theta^{-1} (x) m, s-\tau(x) ) $.

We write  
$$\tilde \Omega:=\Sigma \times\Z^d \times M\times  \br, \quad
\tilde \Omega^+:=\Sigma^+\times\Z^d \times M\times  \br$$
and 
$$\Omega^+:=\Sigma^+\times \Z^d \times M\times  \br/\sim .$$

Consider the product  measure  on $\tilde \Omega$:
$$d{ \tilde{\mathsf M}}:=\frac{1}{\int \tau d\nu} (d\nu  d {\xi}  dm ds).$$

Recall that the BMS measure $\mathsf{m}^{\BMS}$  on $\Omega$ corresponds to the  measure  $\mathsf M$ on $\Sigma^{f,\theta,\tau}$ induced
by $\tilde{\mathsf M}$.

\begin{Def}
Let $\mathcal{F}_0$ be the family of functions on $\tilde \Omega^+$ which are of the form
$$\Psi (x, \xi, m, s)=\Phi (x)  \delta_{\xi_0}(\xi)  u (s)  \langle  \mu(m)w_1, w_2\rangle$$ where 
$\Phi\in C_\beta(\Sigma^+)$, $u\in C_c(\mathbb{R})$,
 $\xi_0\in \Z^d$, $(\mu, W)\in \hat M$ and $w_1, w_2\in W$ are unit vectors.
We will write $\Psi=\Phi\otimes \delta_{\xi_0}  \otimes
u\otimes  \langle  \mu(\cdot )w_1, w_2\rangle $.
\end{Def}

For $\Psi_1, \Psi_2\in C_c(\tilde \Omega^+)$, define
$$I_t(\Psi_1, \Psi_2):=
\sum_{n=0}^{\infty}\int_{\tilde \Omega}\Psi_1\circ \zeta_n(x,\xi,m, s+t)\cdot \Psi_2(x,\xi,m,
s)\; d{ \tilde{\mathsf M}}(x,\xi, m,s)$$
where $ \zeta_n(x,\xi,m, s)=(\sigma^n(x), \xi+f_n(x),\theta_n^{-1}(x)m, s-\tau_n(x))$.

\begin{lem}\label{it}  Let 
$\Psi_2=\Phi\otimes \delta_{\xi_0}  \otimes
u\otimes  \langle  \mu(\cdot )w_1, w_2\rangle \in \mathcal F_0 .$
Then for any  $\Psi_1\in C_c(\tilde \Omega^+)$,
\begin{multline*} I_t(\Psi_1, \Psi_2)= \\
 \tfrac{1}{(2\pi)^d \int \tau d\nu} \int \Psi_1 (x, \xi_0-\xi, m, s) \cdot \langle \Q_{\mu, w_1, t-s}(\Phi\otimes u)(x, \xi, m), w_2 \rangle 
  d\xi d\rho(x) ds dm.\end{multline*}
\end{lem}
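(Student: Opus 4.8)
The plan is to \emph{unfold} $I_t(\Psi_1,\Psi_2)$ and match it, term by term in $n$, against the Fourier--Laplace packaging built into $\Q_{\mu,w_1,\cdot}$. First I substitute $\Psi_2=\Phi\otimes\delta_{\xi_0}\otimes u\otimes\langle\mu(\cdot)w_1,w_2\rangle$ into the defining sum for $I_t$. Since $d\tilde{\mathsf M}=\tfrac{1}{\int\tau\,d\nu}(d\nu\,d\xi\,dm\,ds)$ and $\xi$ ranges over $\z^d$ with counting measure, the factor $\delta_{\xi_0}(\xi)$ collapses the $\z^d$-sum to $\xi=\xi_0$. Next I observe that for every $n\ge 0$ the whole integrand depends only on the future coordinates of $x\in\Sigma$: the cocycles $\tau_n,f_n,\theta_n$ and the functions $\Phi,h$ do, and $\Psi_1\circ\zeta_n$ enters only through $\Psi_1(\sigma^n x,\dots)$, whose dependence on $x$ (for $n\ge0$) is through $x_n,x_{n+1},\dots$ alone. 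Hence I may replace $d\nu$ on $\Sigma$ by its one-sided marginal $h\,d\rho$ on $\Sigma^+$ via Theorem \ref{rpf}(3); this is exactly where the weight $h(x)$ appearing inside $\Q$ is produced.

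I then introduce the two Fourier expansions that linearize the cocycle dependence. For the $\z^d$-variable I write $\Psi_1(\,\cdot\,,\xi_0+f_n(x),\,\cdot\,)=\tfrac{1}{(2\pi)^d}\sum_{\eta}\int_{\bT^d}e^{i\langle v,\,\xi_0+f_n(x)-\eta\rangle}\Psi_1(\,\cdot\,,\eta,\,\cdot\,)\,dv$, and after the change $\eta\mapsto\xi_0-\eta$ the $\z^d$-argument of $\Psi_1$ becomes $\xi_0-\eta$ while the weight $e^{i\langle v,f_n(x)\rangle}$ splits off — precisely the twist carried by $L_{\delta-ir,v,\mu}$. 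For the $\br$-variable I change variables $s\mapsto s'=s+t-\tau_n(x)$, so that the last argument of $\Psi_1$ becomes $s'$, and then Fourier-invert $u(s'-t+\tau_n(x))=\tfrac{1}{2\pi}\int_{\br}\hat u(r)\,e^{-i(t-s')r}e^{ir\tau_n(x)}\,dr$. This produces both the kernel $e^{-i(t-s')r}$ matching the subscript $T=t-s'$ of $\Q$ and the factor $e^{ir\tau_n(x)}$ matching the weight $z=\delta-ir$ of the Ruelle operator.

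The heart of the argument is the transfer-operator duality. Combining the pull-out identity $L_{\delta-ir,v,\mu}^n((G\circ\sigma^n)g)=G\cdot L_{\delta-ir,v,\mu}^n(g)$ with $L_{\delta\tau}^{*\,n}\rho=\rho$, one obtains, for scalar $G$ and vector-valued $g$, the relation $\int_{\Sigma^+}(G\circ\sigma^n)\,g\,e^{ir\tau_n+i\langle v,f_n\rangle}\mu(\theta_n)\,d\rho=\int_{\Sigma^+}G\cdot L_{\delta-ir,v,\mu}^n(g)\,d\rho$. To put the expression into this shape I must free the $M$-argument $\theta_n^{-1}(x)m$ of $\Psi_1$ from $x$: for each fixed $x$ I substitute $m\mapsto\theta_n(x)m$ in the Haar integral over $M$, which turns that argument into $m$ and converts $\langle\mu(m)w_1,w_2\rangle$ into $\langle\mu(\theta_n(x))\mu(m)w_1,w_2\rangle$, thereby supplying the holonomy weight. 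Taking $G=\Psi_1(\,\cdot\,,\xi_0-\eta,m,s')$ and $g=\Phi h\,\mu(m)w_1$ and applying the duality moves the evaluation point of $\Psi_1$ from $\sigma^n x$ to the base point and assembles $\sum_n L_{\delta-ir,v,\mu}^n(\Phi h\,\mu(m)w_1)$ inside the $v$- and $r$-integrals. Collecting the surviving factors $e^{i\langle v,\eta\rangle}$, $\hat u(r)$, $e^{-i(t-s')r}$ reconstructs exactly $\langle\Q_{\mu,w_1,t-s'}(\Phi\otimes u)(x,\eta,m),w_2\rangle$, and the prefactors $\tfrac{1}{\int\tau\,d\nu}$, $\tfrac{1}{(2\pi)^d}$ combine with the $\tfrac{1}{2\pi}$ built into $\Q$ to give the stated constant; renaming $\eta\to\xi$ and $s'\to s$ yields the claim.

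Two points require care. First, all interchanges (Fubini across the Haar, $\bT^d$- and $\br$-integrals, and the $M$-substitution) are legitimate because every function has compact support; moreover the $n$-summation need not be interchanged with the integrals at this stage, since both $I_t$ and the series inside $\Q$ carry the same $\sum_{n\ge0}$, so I simply match term by term, the convergence of the assembled series being the separate content of Theorem \ref{pq}(1) and the resolvent bounds of Theorem \ref{mrr}. Second, and the genuinely delicate point, is the bookkeeping of the \emph{non-abelian} holonomy: the iterate $L_{\delta-ir,v,\mu}^n$ accumulates the holonomy along the orbit segment in the opposite order to the suspension twist $\theta_n^{-1}(x)$, so identifying the factor $\mu(\theta_n(x))$ produced by the $M$-substitution with the one generated by the operator must be carried out carefully, using the $M$-invariance of Haar measure together with the unitarity $\mu(g)^\ast=\mu(g^{-1})$ to reconcile the two orderings. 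This is where I expect the real care to be needed; once the orderings are matched, the identity reduces to a direct comparison of integrands.
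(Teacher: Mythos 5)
Your proof is correct and follows essentially the same route as the paper's: unfold $I_t$, pass to $\Sigma^+$ via $d\nu=h\,d\rho$, use the eigenmeasure duality $\int L_{\delta\tau}^n F\,d\rho=\int F\,d\rho$ together with changes of variables in $(\xi,m,s)$, and reassemble $\Q_{\mu,w_1,t-s}$ by Fourier inversion on $\z^d$ and $\br$ --- the only differences (collapsing the $\xi$-sum at the start, Fourier-expanding $\Psi_1$ rather than the factor $\delta_{\xi_0}(\xi-f_n(y))$) are cosmetic reorderings of the same computation. Your closing caveat about the non-abelian holonomy is well taken: the paper's proof silently uses for $\theta_n$ the ordering of the product that matches the holonomy accumulated by the iterate $L^n_{\delta-ir,v,\mu}$, which is precisely the reconciliation you describe.
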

\begin{proof} 
Since $d\nu(x)=h(x)d\rho(x)$, we have
 \begin{align*}
&\int \tau d\nu \cdot I_t(\Psi_1,\Psi_2) =\\
& \sum_{n=0}^{\infty} \int\Psi_1(\sigma^n x, \xi+f_n(x),\theta_{n}^{-1}(x) m,s-\tau_n(x))\,\Psi_2(x,\xi,m,s-t)
 h(x)d\rho(x) ds dm d\xi . \\
\end{align*}
Since $d\rho$ is an eigenmeasure of $L_{\delta}$ with eigenvalue $1$,
$$\int_{\Sigma^+} (L_{\delta}^n F)(x) d\rho(x)= \int_{\Sigma^+} F(x) d\rho(x).$$
Using this, the above
is equal to
 \begin{multline*} \sum_{n=0}^{\infty} \int_{\tilde{\Omega}^+}\Psi_1(x,\xi,m,s)\sum_{\sigma^n y=x} e^{-\delta \tau_n(y)} (\Phi\cdot h) (y) \delta_{\xi_0}(\xi-f_n(y)) \\
u(s-t+\tau_n(y))\langle \mu(\theta_n(y)m) w_1,w_2\rangle d\rho(x) d\xi ds d m.\end{multline*}
Using the identity
$\delta_\xi(f_n(y))= \frac{1}{(2\pi)^d} \int_{\bT^d} e^{i \la w, \xi-f_n(y)\ra} dw$
and  the  Fourier inversion formula of $u$: $u(t)=\frac{1}{2\pi}\int_{\br} e^{ist}\hat u(s) ds$,
the above is  again equal to
$$\frac{1}{(2\pi)^d } \ \int \Psi_1 (x,\xi_0-\xi, m, s)
 \cdot \langle \Q_{\mu, w_1, t-s}(\Phi\otimes u)(x, \xi, m), w_2 \rangle 
 d\xi d\rho(x) ds dm .$$
This proves the claim.
 \end{proof}

\bigskip

\begin{prop}\label{m2} For $\Psi_1, \Psi_2\in C_c(\tilde \Omega^+)$, we have 
\be \label{prove}\lim_{t\to +\infty}t^{d/2} I_t(\Psi_1, \Psi_2) = \frac{1}{(2\pi\sigma)^{d/2}   } { \tilde{\mathsf M}}(\Psi_1)  { \tilde{\mathsf M}} (\Psi_2).\ee
\end{prop}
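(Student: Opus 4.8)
The plan is to reduce \eqref{prove} to the case $\Psi_2\in\mathcal{F}_0$ and then to combine the closed formula of Lemma~\ref{it} with the asymptotic expansion of Theorem~\ref{pq}. Both sides of \eqref{prove} are linear in $\Psi_2$, and on the product space $\tilde\Omega^+=\Sigma^+\times\z^d\times M\times\br$ the linear span of $\mathcal{F}_0$ is dense in $C_c(\tilde\Omega^+)$ for uniform convergence on a fixed compact support: this follows from Stone--Weierstrass, from the density of $C_\beta(\Sigma^+)$ in $C(\Sigma^+)$, from Peter--Weyl applied to the matrix coefficients $\langle\mu(\cdot)w_1,w_2\rangle$ on $M$, and from the fact that the $\delta_{\xi_0}$ span all finitely supported functions on $\z^d$. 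Granting a uniform-in-$t$ estimate $t^{d/2}\lvert I_t(\Psi_1,\Psi_2)\rvert\le C(\Psi_1)\lVert\Psi_2\rVert_\infty$ (for $\Psi_2$ supported in a fixed compact set), a standard $3\varepsilon$-argument will reduce the proposition to $\Psi_2\in\mathcal{F}_0$.

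For $\Psi_2=\Phi\otimes\delta_{\xi_0}\otimes u\otimes\langle\mu(\cdot)w_1,w_2\rangle$, I would insert Lemma~\ref{it}, which expresses $t^{d/2}I_t(\Psi_1,\Psi_2)$ as an integral over $(x,\xi,m,s)$ of $\Psi_1(x,\xi_0-\xi,m,s)$ against $t^{d/2}\langle\Q_{\mu,w_1,t-s}(\Phi\otimes u)(x,\xi,m),w_2\rangle$. Since $\Psi_1$ has compact support, the sum over $\xi\in\z^d$ is finite and $s$ runs over a compact set, so (as $\Sigma^+$ and $M$ are compact) the integration domain has finite measure. Writing $t^{d/2}=(t/(t-s))^{d/2}(t-s)^{d/2}$ and observing that $(t/(t-s))^{d/2}\to1$ uniformly for $s$ in a compact set, the uniform-on-compacta convergence in Theorem~\ref{pq} will allow me to pass the limit under the integral.

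Two cases then remain. If $\mu\neq1$, Theorem~\ref{pq}(3) forces the integrand to $0$, so the left-hand side of \eqref{prove} vanishes; simultaneously $\tilde{\mathsf M}(\Psi_2)=0$ because $\int_M\langle\mu(m)w_1,w_2\rangle\,dm=0$ by Schur orthogonality, so the right-hand side vanishes as well. If $\mu=1$, then $\Q_{\mu,w_1,\cdot}=\Q_\cdot$ is independent of $m$, and Theorem~\ref{pq}(2) supplies the pointwise limit $\tfrac{\hat u(0)C(0)}{\int\tau\,d\nu}\rho(\Phi h)h(x)$. Using $h\,d\rho=d\nu$, re-indexing the $\xi$-sum by $\eta=\xi_0-\xi$, and recognizing $\int\Psi_1\,d\nu\,d\eta\,ds\,dm=(\int\tau\,d\nu)\,\tilde{\mathsf M}(\Psi_1)$ together with $\hat u(0)\rho(\Phi h)=(\int\tau\,d\nu)\,\tilde{\mathsf M}(\Psi_2)$, the limit collapses to $\tfrac{C(0)}{(2\pi)^d}\,\tilde{\mathsf M}(\Psi_1)\tilde{\mathsf M}(\Psi_2)$; the identity $C(0)=(2\pi/\sigma)^{d/2}$ turns the prefactor into $(2\pi\sigma)^{-d/2}$, matching \eqref{prove}. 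This last part is routine constant bookkeeping (with $\int_M dm$ normalized to $1$).

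The genuinely delicate point, and the one I would spend the most care on, is the uniform-in-$t$ bound $t^{d/2}\lvert I_t(\Psi_1,\Psi_2)\rvert\le C(\Psi_1)\lVert\Psi_2\rVert_\infty$ that closes the density reduction; the interchange of limit and integration for a fixed $\Psi_2\in\mathcal{F}_0$ is comparatively harmless once one notices the domain is effectively compact. The uniform bound must come from the geometric decay of the iterates $L_{\delta-it,v,\mu}^{\,n}$ established in the spectral analysis of Theorem~\ref{mrr}, controlled uniformly over the relevant ranges of $(t,v,\mu)$, which is precisely what guarantees convergence of the defining series for $I_t$ at the right rate.
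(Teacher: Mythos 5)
Your overall architecture coincides with the paper's: Lemma \ref{it} plus Theorem \ref{pq} for $\Psi_2\in\mathcal F_0$ (with the same case split on $\mu$ trivial or not, the same Schur-orthogonality observation that $\tilde{\mathsf M}(\Psi_2)=0$ for $\mu\neq 1$, and the same constant bookkeeping via $C(0)=(2\pi/\sigma)^{d/2}$), followed by a density reduction. That part of your write-up is sound. The genuine gap is the step you yourself flag as delicate: the uniform bound $t^{d/2}\lvert I_t(\Psi_1,\Psi_2)\rvert\le C(\Psi_1)\lVert\Psi_2\rVert_\infty$ for \emph{all} continuous $\Psi_2$ with fixed compact support, which you leave unproven and propose to extract from "geometric decay of $L^n_{\delta-it,v,\mu}$ controlled uniformly over $(t,v,\mu)$." That route does not go through with the tools established in the paper. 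Theorem \ref{mrr} gives spectral control per representation $\mu$ and locally in $(t,v)$; a general continuous $\Psi_2$ has a Peter--Weyl expansion over \emph{infinitely many} $\mu\in\hat M$, and sup-norm control of $\Psi_2$ gives no summability over that expansion. Worse, the whole transfer-operator machinery (Lemma \ref{it}, the definition of $\Q_{\mu,w,T}$, Theorem \ref{pq}) requires the $\Sigma^+$-factor to lie in $C_\beta(\Sigma^+)$, and the Lipschitz norm is not controlled by $\lVert\cdot\rVert_\infty$, so one cannot feed an arbitrary continuous $\Psi_2$ into the spectral analysis at all.

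The correct closure --- and the one the paper uses --- is elementary and bypasses any uniform spectral input: exploit the positivity of $I_t$ in each argument. Given $\Psi_2\in C_c(\tilde\Omega^+)$ and $\e>0$, first choose $F_2\in\mathcal F$ with $\sup\lvert\Psi_2-F_2\rvert\le\e$ (your density argument), and then choose a \emph{fixed nonnegative majorant} $\omega_2\in\mathcal F$ equal to a constant $\kappa\ge\lVert F_2\rVert_\infty+\lVert\Psi_2\rVert_\infty+1$ on the union of the two supports and vanishing outside a $1$-neighborhood, so that $\lvert\Psi_2-F_2\rvert\le\e\,\omega_2$ pointwise. Since $I_t$ is an integral of products, $t^{d/2}\lvert I_t(\Psi_1,\Psi_2)-I_t(\Psi_1,F_2)\rvert\le\e\,t^{d/2}I_t(\lvert\Psi_1\rvert,\omega_2)$, and $\limsup_t t^{d/2}I_t(\lvert\Psi_1\rvert,\omega_2)<\infty$ is already a consequence of the convergence you proved for elements of $\mathcal F$ (this is precisely \eqref{fi} in the paper). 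In other words, your linchpin estimate holds in limsup form as a \emph{corollary} of your Step for $\mathcal F_0$, via a fixed majorant in $\mathcal F$, rather than as an input requiring new uniformity over $(t,v,\mu)$; as written, your proposal is incomplete at exactly this point, and the repair is this sandwiching device.
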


\begin{proof}  Let $\Psi_1 \in C_c(\tilde \Omega^+)$.

\noindent{\bf Step 1:} Let $\mathcal F$ be the space of functions which are finite linear combinations of functions from $\mathcal F_0$.
We first show \eqref{prove} holds for any  $\Psi_2\in \mathcal F$.
It suffices to
consider the case
where $$\Psi_2(x, \xi, m,s)=\Phi (x)\otimes \delta_{\xi_0}(\xi) \otimes u (s)\cdot  \langle  \mu(m)w_1, w_2\rangle \in \mathcal F_0.$$

Let $$F_t(x,\xi, m, s):=\tfrac{t^{d/2}}{ \int \tau d\nu \cdot  (2\pi)^{d}} 
 \Psi_1(x, \xi_0-\xi,m, s)\la  \Q_{\mu,w_1, t-s}(\Phi\otimes u) (x,\xi, m), w_2\ra.$$ 
Then Lemma \ref{it} gives
$$t^{d/2} I_t(\Psi_1, \Psi_2)= \int F_t(x, \xi, \theta, s)  d\rho(x) dsd\xi dm  .$$

We consider two cases. First suppose $\mu=1$. Then 
Theorem \ref{pq}(2) implies that
$F_t (x,\xi, m,s) $ is dominated by a constant multiple of $\Psi_1$ and
 converges pointwise
to an $L^1$-integrable function on $\tilde \Omega$:
$$ \tfrac{1}{( \int \tau dv )^2(2\pi)^{d}} C(0) \hat u(0) \rho (\Phi h) h(x) \Psi_1(x, \xi_0-\xi, m,s) . $$
Hence by the dominated convergence theorem,
\begin{align*} &\lim_{t\to \infty} t^{d/2} I_t(\Psi_1, \Psi_2)
\\ &= \frac{1}{ (2\pi)^{d} ( \int \tau  dv )^2} C(0) \hat u(0) \rho (\Phi  h) \int \Psi_1(x, \xi_0-\xi,m,  s)  
 h(x) d\rho(x) ds d\xi dm \\ &
 = \frac{ C(0) }{(2\pi)^{d}} \int \Psi_1 d{ \tilde{\mathsf M}} \cdot   \int \Psi_2 d{ \tilde{\mathsf M}}.
\end{align*}

 Plugging $C(0)={(2\pi/\sigma)^{d/2}}$ in the above,
 we get $$t^{d/2} I_t(\Psi_1, \Psi_2)\sim \frac{1}{(2\pi\sigma)^{d/2}} \int \Psi_1 d{ \tilde{\mathsf M}} \cdot   \int \Psi_2 d{ \tilde{\mathsf M}}.$$
  
Now suppose $\mu$ is non-trivial.  Then $d{ \tilde{\mathsf M}}(\Psi_2)=0$. On the other hand, 
Theorem \ref{pq}(3) implies that  $F_T$ converges to $ 0$ pointwise, and  is dominated by $\Psi_1$.
Therefore, by the dominated convergence theorem, we get 
$$\lim_{T\to \infty}t^{d/2} I_t(\Psi_1, \Psi_2)=0$$
proving the claim.

As a consequence, we have
\begin{equation}\label {fi}  \limsup_t t^{d/2} |I_t(\Psi_1, \Psi_2)| <\infty \end{equation}
for any $\Psi_1\in C_c(\tilde \Omega)$ and $\Psi_2\in \mathcal F$.

\noindent{\bf Step 2:} 
 Let $\Psi_2\in C_c(\tilde \Omega^+)$ be a general function. 
 For any $\e>0$,  there exist  $F_2,  \omega_2 \in \mathcal F$ such that for any $(x,\xi, m,s)\in \tilde \Omega^+$,
 \be \label{fw}  |\Psi_2(x,\xi,m, s)-F_2(x,\xi, m,s)|  \le \e \cdot \omega_2(x,\xi, m,s) .\ee
 First to find $F_2$, 
 the Peter-Weyl theorem implies that
 $\Psi_2(x,\xi, m, s)$ can be approximated
 by a linear combination of functions of form 
 $$\kappa(x, \xi, s)\la \mu (m) w_1, w_2\ra$$
 for $\kappa\in C_c(\Sigma^+\times \z^d\times \br)$ and $(\mu, W)\in \hat M$ and $w_1, w_2\in W$ unit vectors.
 As $\z^d$ is discrete, $\kappa$ can be approximated by linear combinations
 of functions of form $c(x, s)\delta_{\xi_0}$ with $c(x,s)\in C_c(\Sigma^+\times \br)$. 
 Now $c(x,s)$ can be approximated by linear combinations
 of functions of form $\Phi(x) u(s)$ with $\Phi\in C_\beta(\Sigma^+)$ and $u\in C_c^\infty(\br)$ by
 the Stone-Weierstrauss theorem. This gives that for any $\e>0$,
 we can find  $F_2\in \mathcal F$ such that
 $$ \sup |\Psi_2(x,\xi,m, s)-F_2(x,\xi, m,s)|  \le \e.$$

 Now let $\mathcal O$ be  the union of the supports
  of $F_2$ and $\Psi_2$, $\mathcal O'$ be the $1$-neighborhood of $\mathcal O$, and let $\kappa:=\| F_2\|_\infty
  +\|\Psi_2\|_\infty +1$.
  We can then find $\omega_2\in \mathcal F$ such that $\omega_2=\kappa$ on $\Omega$
  and $\omega_2=0$ outside $\Omega'$.
  Then for any $(x,\xi, m,s)\in \tilde \Omega^+$,
  $$  |\Psi_2(x,\xi,m, s)-F_2(x,\xi, m,s)|  \le \e \omega_2(x,\xi, m, s)$$
as required in \eqref{fw}.

\noindent{\bf Step 3:}  By Step (1) and (2),
we have  $$\limsup |t^{d/2} I_t(\Psi_1, \Psi_2) -t^{d/2} I_t(\Psi_1, F_2)|  \le\e  \limsup
 t^{d/2} | I_t(\Psi_1, \omega_2)| \le  \e c_0,$$
where $c_0:=\limsup_t t^{d/2} |I_t(\Psi_1, \omega_2)| <\infty$.
 
 Hence  
 \begin{align*}
  &\lim_t t^{d/2} I_t(\Psi_1, \Psi_2)\\
 = &\lim_t t^{d/2} I_t(\Psi_1, F_2) + O(\e)\\
  = &\frac{1}{(2\pi \sigma)^{d/2}} { \tilde{\mathsf M}} (\Psi_1) { \tilde{\mathsf M}}(F_2) +O(\e)\\
  =  & \frac{1}{(2\pi \sigma)^{d/2}} { \tilde{\mathsf M}}(\Psi_1){ \tilde{\mathsf M}} (\Psi_2) +O(\e),
 \end{align*}
 since ${ \tilde{\mathsf M}}(F_2)={ \tilde{\mathsf M}}(\Psi_2)+O(\e)$.
 
 As $\e>0$ is arbitrary, this proves the claim for any $ \Psi_2\in C_c(\tilde \Omega^+)$.
 \end{proof}

  \begin{thm} \label{m1} Let $\psi_1, \psi_2\in C_c(\Omega)$. Then
 $$\lim_{t\to \infty} {t^{d/2}} \int \psi_1(ga_t) \psi_2 (g) d\mathsf{m}^{\BMS}(g)
 = \frac{1}{(2\pi\sigma)^{d/2}}  \cdot \mathsf{m}^{\BMS}(\psi_1) \mathsf{m}^{\BMS}(\psi_2).$$
\end{thm}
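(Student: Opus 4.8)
The plan is to deduce Theorem \ref{m1} from Proposition \ref{m2} by expressing the BMS correlation on $\Omega$ as the unfolded quantity $I_t$ on the cover $\tilde\Omega^+$. Recall that $(\Omega,a_t,\mathsf{m}^{\BMS})$ is modeled by the suspension flow on $\Sigma^{f,\theta,\tau}$ through the semiconjugacy $\pi$, and that $\mathsf{m}^{\BMS}$ is the measure induced by $\tilde{\mathsf M}$ on the fundamental domain $\{0\le s<\tau(x)\}$ of the relation $\sim$. Given $\psi_1,\psi_2\in C_c(\Omega)$, I would first pull them back to the $\sim$-invariant functions $\Psi_i:=\psi_i\circ\pi$ on $\tilde\Omega$, and then form their fundamental-domain truncations $\hat\Psi_i:=\Psi_i\cdot\mathbf 1_{\{0\le s<\tau\}}$. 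By the description of $\mathsf{m}^{\BMS}$ as $\frac{1}{\int\tau d\nu}\pi_*(d\nu\,d\xi\,dm\,ds)$ one has $\tilde{\mathsf M}(\hat\Psi_i)=\mathsf{m}^{\BMS}(\psi_i)$.

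The first key step is an unfolding identity. Fix the supports of $\psi_1,\psi_2$; then for all large $t$ and every $(x,\xi,m,s)$ in the support of $\hat\Psi_2$ there is a unique ceiling-crossing index $n=n(x,s,t)$ such that the time coordinate $s+t-\tau_n(x)$ of $\zeta_n(x,\xi,m,s+t)$ lies in $[0,\tau(\sigma^n x))$. Since $\Psi_1$ is $\sim$-invariant, $\Psi_1$ evaluated at the time-$t$ flow of $[(x,\xi,m,s)]$ equals $\hat\Psi_1(\zeta_n(x,\xi,m,s+t))$, while all other truncations $\hat\Psi_1(\zeta_k(x,\xi,m,s+t))$, $k\ne n$, vanish. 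Integrating against $\hat\Psi_2$ over the fundamental domain and comparing with the definition of $I_t$ yields
\[
\int_\Omega\psi_1(ga_t)\psi_2(g)\,d\mathsf{m}^{\BMS}(g)=I_t(\hat\Psi_1,\hat\Psi_2)
\]
for all sufficiently large $t$. Modulo the reduction discussed below, Proposition \ref{m2} together with $\tilde{\mathsf M}(\hat\Psi_i)=\mathsf{m}^{\BMS}(\psi_i)$ then yields the theorem.

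The main obstacle is that Proposition \ref{m2} is stated for genuine elements of $C_c(\tilde\Omega^+)$, namely continuous, compactly supported functions depending only on future coordinates, whereas $\hat\Psi_i$ depend on the full two-sided sequence (through $\pi$ and $\zeta$) and carry the discontinuous ceiling indicator $\mathbf 1_{\{0\le s<\tau\}}$. Bridging this gap requires two reductions: first, smoothing the indicator by continuous cutoffs in the $s$-variable, and second, replacing each $\hat\Psi_i$ by a cohomologous function depending only on future coordinates --- the classical fact that a $\beta$-Hölder function on $\Sigma$ is cohomologous to one on $\Sigma^+$, with the coboundary telescoping across the summation index $n$ and contributing negligibly to $t^{d/2}I_t$. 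In both reductions the errors are absorbed using the uniform bound $\limsup_t t^{d/2}|I_t(\Psi_1',\Psi_2')|<\infty$ established in the proof of Proposition \ref{m2} (see \eqref{fi}), together with the approximation scheme of its Steps 2--3. The $\Z^d$- and $M$-coordinates are passive throughout this step, so the argument is identical to the trivial-cover case treated by Winter \cite{Win} and Babillot \cite{Ba}.

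Finally, once $\hat\Psi_i$ has been replaced by approximants $\Psi_i'\in C_c(\tilde\Omega^+)$ with $\tilde{\mathsf M}(\Psi_i')=\mathsf{m}^{\BMS}(\psi_i)+O(\e)$, Proposition \ref{m2} gives $t^{d/2}I_t(\Psi_1',\Psi_2')\to(2\pi\sigma)^{-d/2}\tilde{\mathsf M}(\Psi_1')\tilde{\mathsf M}(\Psi_2')$; letting $\e\to0$ produces the asserted limit $(2\pi\sigma)^{-d/2}\mathsf{m}^{\BMS}(\psi_1)\mathsf{m}^{\BMS}(\psi_2)$. I expect the two-sided-to-one-sided reduction, carried out compatibly with the unfolding sum over $n$, to be the delicate point.
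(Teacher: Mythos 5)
Your overall architecture --- unfold the BMS correlation into $I_t$ via a fundamental-domain representative and then invoke Proposition \ref{m2} together with the dominated-approximation scheme of its Steps 2--3 --- is exactly the paper's, and your unfolding step is sound: the unique-crossing-index argument, the identity $\tilde{\mathsf M}(\hat\Psi_i)=\mathsf{m}^{\BMS}(\psi_i)$, and the smoothing of the ceiling indicator are all fine (the paper sidesteps the indicator by instead choosing continuous compactly supported lifts $\Psi_i$ with $\sum_{n\in\z}\Psi_i\circ\zeta^n=\psi_i\circ\pi$ and shrinking $\supp\psi_2$ so that only the $n=0$ translate survives on $\supp\Psi_2$). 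But at the step you yourself flag as delicate, your proposed mechanism fails. The Sinai--Bowen cohomology lemma does not do what you want for \emph{observables} in a correlation integral. If you write $\hat\Psi_1=\Psi_1^{+}+U-U\circ\zeta$ with $\Psi_1^{+}$ depending only on future coordinates, the transfer function $U(x,\xi,m,s)=\sum_{j\ge0}\bigl[\hat\Psi_1(\zeta_j(x,\xi,m,s))-\hat\Psi_1(\zeta_j(\bar x,\xi,m,s))\bigr]$ (with $\bar x$ the future-projection of $x$) is \emph{not} compactly supported: its support is an unbounded staircase along the $\zeta$-orbit of $\supp\hat\Psi_1$, unbounded in both $s$ and $\xi$. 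Hence $\Psi_1^{+}\notin C_c(\tilde\Omega^+)$ and Proposition \ref{m2} does not apply to it; moreover the telescoped boundary term $\int U(x,\xi,m,s+t)\,\hat\Psi_2\,d\tilde{\mathsf M}$ is not negligible --- expanding $U$ shows it equals $I_t(\hat\Psi_1,\hat\Psi_2)$ minus the same correlation evaluated along the projected orbits $\zeta_j(\bar x,\cdot)$, i.e.\ a quantity of the same order $t^{-d/2}$ as the limit you are computing, so the telescoping identity is circular. Neither \eqref{fi} (which is anyway only established with the second argument in $\mathcal F$) nor the Steps 2--3 scheme supplies the missing comparison; note also that $\hat\Psi_i$ is not H\"older to begin with because of the indicator.

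The correct mechanism, which is what the paper actually does, dispenses with cohomology: since $\tilde{\mathsf M}$ is $\zeta$-invariant and the time translation commutes with $\zeta$, one has the \emph{exact} identity $I_t(\Psi_1,\Psi_2)=I_t(\Psi_1\circ\zeta^k,\Psi_2\circ\zeta^k)$ for every $k$; and because $f$, $\theta$, $\tau$ depend only on future coordinates while $d_\beta(\sigma^k x,\sigma^k x')\le\beta^{k}$ whenever $x,x'$ share future coordinates, the function $\Psi_i\circ\zeta^k$ is, for $k$ large, within $\e\cdot\omega_i$ of $F_i(x,\xi+f_k(x),\theta_k^{-1}(x)m,s-\tau_k(x))$ with $F_i,\omega_i\in C_c(\tilde\Omega^+)$ --- this is the displayed approximation in the paper's proof of Theorem \ref{m1}. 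One then runs exactly your dominated-error absorption on $F_i$ and $\omega_i$. (Your telescoping idea can in principle be salvaged, since for $(x,\xi,m,s)\in\supp\hat\Psi_2$ the contributing indices satisfy $j\gtrsim ct$, making the pointwise discrepancy $O(\beta^{ct})$ for Lipschitz data; but that estimate is precisely the $\zeta^k$-contraction argument in disguise, and you have not carried it out.) Replacing your cohomology step by this contraction step makes your proof coincide with the paper's.
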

\begin{proof}
For each $i=1,2$, let $\Psi_i\in C_c(\tilde \Omega)$ be the lift of $\psi_i$ to $\tilde \Omega$ so
 that  $$\psi_i [(x,\xi,m, s)]=\sum_{n\in \mathbb{Z}}\Psi_i\circ \zeta^n(x,\xi,m, s). $$

We assume that the support of $\psi_2$ (and hence of $\Psi_2$) is small enough so that
$\Psi_2(\zeta^n(x,\xi, m,s))=0$ for all $n \ne 0$ if $(x,\xi, m,s)\in \text{supp}(\Psi_2)$.

We first claim that  for all large $t\gg 1$,
\be\label{unf} \int_{\Omega} \psi_1 (ga_t) \psi_2(g) d\mathsf{m}^{\BMS}(g) = I_t(\Psi_1, \Psi_2) .\ee

 Using the unfolding,
\begin{align*} &\int_{\Omega} \psi_1 (ga_t) \psi_2(g) d\mathsf{m}^{\BMS}(g)\\
=&
\sum_{n=-\infty}^{\infty}\int_{\text{supp}(\Psi_2)}
\Psi_1\circ \zeta^n(x,\xi,m, s+t)\cdot \Psi_2(x,\xi,m,  s) \, d{ \tilde{\mathsf M}}
\\=&
\sum_{n=0}^{\infty}\int_{\tilde \Omega}\Psi_1\circ \zeta^n(x,\xi,m, s+t)\cdot \Psi_2(x,\xi,m,  s) \, d{ \tilde{\mathsf M}} \\
+&\sum_{n=1}^{\infty}\int_{ \tilde \Omega}\Psi_1\circ \zeta^{-n}(x,\xi,m ,s+t)\cdot \Psi_2(x,\xi,m, s) \, d{ \tilde{\mathsf M}} . \end{align*}
The first term of the last equation is $I_t(\Psi_1, \Psi_2)$. For the second term, note that
 for any $(x,\xi,m, s)\in \text{supp}(\Psi_2)$,
$$\Psi_1\circ \zeta^{-n}(x,\xi,m ,s+t)=\Psi_1(\sigma^{-n}(x), \xi-f_n(x), \theta_n^{-1}(x)m, s+t+\tau_n(x))$$
which is $0$ if $t$ is large enough, as $\tau_n(x)>0$.

Therefore the second term is $0$ for $t$ large enough, proving the claim \eqref{unf}.
Therefore  if $\Psi_1, \Psi_2\in C_c(\tilde \Omega^+)$,
Theorem \ref{m1} follows from Proposition \ref{m2}.

 Let $\Psi_1, \Psi_2 \in C_c(\tilde \Omega)$. Then for any $\e >0$,
 we can find a sufficiently large $k\ge 1$, $F_1,F_2,\omega_1,$ and $ \omega_2$ in $C_c(\tilde\Omega^+)$ such that
for all $(x, \xi,m, s)\in \tilde \Omega$, \begin{align*}
&|\Psi_i\circ \zeta^k(x,\xi,m,s)-F_i(x,\xi+f_k(x),\theta_k^{-1}(x)m,s-\tau_k(x))|\\
<&\epsilon\cdot  \omega_i(x,\xi-f_k(x),\theta_k^{-1}(x)m,s-\tau_k(x)).
\end{align*}
We then deduce by applying the previous case to $F_i$ and $\omega_i$ that
\begin{align*}
&\lim t^{d/2}\int \psi_1(ga_t)\psi_2(g)d\mathsf{m}^{\BMS}(g)\\
=&\lim t^{d/2}I_t(\Psi_1,\Psi_2)\\
=&\lim t^{d/2}I_t(\Psi_1\circ \zeta^k,\Psi_2\circ \zeta^k)\\
=&\lim \left( t^{d/2}I_t(F_1,F_2)+O(\epsilon\cdot t^{d/2} (I_t(F_1,\omega_2)+I_t(F_2,\omega_1)+I_t(\omega_1,\omega_2))) \right) \\
=&\tfrac{1}{(2\pi\sigma)^{d/2}}  { \tilde{\mathsf M}} (\Psi_1)  { \tilde{\mathsf M}} (\Psi_2)+O(\e) .
\end{align*}
As   ${ \tilde{\mathsf M}} (\Psi_i) =  \mathsf{m}^{\BMS} (\psi_i)$ and $\e>0$ is arbitrary, this finishes the proof.
 \end{proof}

 \begin{Rmk}
 \rm We remark that the methods of our proof of Theorem \ref{m1} can be extended to other Gibbs measures on $\Gamma\ba G$.
 \end{Rmk}
 \subsection{ Correlation functions for  $(\Gamma\ba G, a_t, \mathsf{m}^{\Haar})$.}

We can deduce
the asymptotic of the correlation functions for the Haar measure Theorem \ref{mix2}
from that for the BMS measure Theorem \ref{m1} via the following theorem:

\begin{thm}\label{com}
Suppose that there exists a function $J:(0,\infty)\to (0,\infty)$ such that for any $\psi_1, \psi_2\in C_c(\G\ba G)$,
\be \label{e1} \lim_{t\to + \infty} J(t) \int \psi_1(ga_t) \psi_2(g) d\mathsf{m}^{\BMS} (g)=\mathsf{m}^{\BMS}(\psi_1)\mathsf{m}^{\BMS}(\psi_2).\ee
Then  
for any $\psi_1, \psi_2\in C_c(\G\ba G)$,
\be \label{e2} \lim_{t\to +\infty} J(t)e^{(D-\delta)t}  \int  \psi_1(ga_t) \psi_2(g) d\mathsf{m}^{\Haar}(g) =\mathsf{m}^{\BR_+}(\psi_1)\mathsf{m}^{\BR_-}(\psi_2).\ee
\end{thm}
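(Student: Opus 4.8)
The plan is to deduce \eqref{e2} from \eqref{e1} by Roblin's transfer technique, which rests on the observation that the four measures $\mathsf{m}^{\Haar}$, $\mathsf{m}^{\BMS}$, $\mathsf{m}^{\BR_+}$, $\mathsf{m}^{\BR_-}$ share one and the same local product structure and differ only in the conformal density used along the two transverse horospherical directions: the Lebesgue density $\{\mathsf{m}_x\}$ (of dimension $D$) or the Patterson--Sullivan density $\{\nu_x\}$ (of dimension $\delta$). Reading off the four formulas of Section~2, $\mathsf{m}^{\BR_+}$ is $\mathsf{m}^{\BMS}$ with $\nu_o$ replaced by $\mathsf{m}_o$ in the forward endpoint $u^+$, $\mathsf{m}^{\BR_-}$ is the same replacement in the backward endpoint $u^-$, and $\mathsf{m}^{\Haar}$ makes both replacements at once. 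First I would reduce, by a partition of unity subordinate to a finite cover of $\supp\psi_1\cup\supp\psi_2$ by flow boxes $x\,N^-_\e A_\e M_\e N^+_\e$ and a product approximation exactly as in Step~2 of the proof of Proposition~\ref{m2}, to the case in which $\psi_1$ and $\psi_2$ are supported in arbitrarily small flow boxes and are of product form in these coordinates.

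The core is a local computation carried out inside such a box. Disintegrating $\mathsf{m}^{\Haar}$ along the expanding horospherical foliation and using that $\mathsf{m}^{\Haar}$, being $G$-invariant, is right $N^+$-invariant, the conditional measures along the leaves $\bar g N^+$ are the Lebesgue measures on $N^+$, while the density formulas of Section~2 identify the transverse measure, in the backward and flow directions, with that of $\mathsf{m}^{\BR_-}$. Writing $g=\bar g\,n$ with $n\in N^+$ and using $\psi_1(\bar g n a_t)=\psi_1\big(\bar g\,a_t\,(a_{-t}na_t)\big)$, I would perform the substitution $n\mapsto a_{-t}na_t$ along $N^+$; the resulting Jacobian is the Lebesgue scaling of $N^+$ under conjugation by $a_t$, of order $e^{Dt}$, whereas the parallel computation governing $\mathsf{m}^{\BMS}$ replaces it by the Patterson--Sullivan scaling of order $e^{\delta t}$. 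It is precisely the mismatch of these two exponents that produces the factor $e^{(D-\delta)t}$ in \eqref{e2}.

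After the substitution the displacement $a_t n a_{-t}$ felt by $\psi_2$ contracts to the identity, so $\psi_2$ is, up to a uniformly small error, evaluated at the leaf center $\bar g$; what survives is the Lebesgue average of $\psi_1$ along the expanding horosphere through $\bar g a_t$. I would then recast this quantity as a BMS correlation of $\psi_2$ against a suitable $N^+$-thickening of $\psi_1$ and apply the hypothesis \eqref{e1}. Since the horospherical average uses the Lebesgue density on $N^+$ (dimension $D$) in place of the Patterson--Sullivan density carried by $\mathsf{m}^{\BMS}$ (dimension $\delta$), the limit weights $\psi_1$ by the forward-Lebesgue density, returning $\mathsf{m}^{\BR_+}(\psi_1)$, while the surviving backward-Lebesgue transverse weight of $\psi_2$ returns $\mathsf{m}^{\BR_-}(\psi_2)$; collecting the factors gives the right-hand side of \eqref{e2}.

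The hard part will be the uniform error control accompanying this smearing, i.e. the interchange of the two limits $t\to+\infty$ and box-size $\e\to0$. One must bound, uniformly in $t$, the cost of (i) replacing $\psi_2$ by its value at the leaf center and (ii) replacing the Lebesgue horospherical average by the thickened BMS quantity to which \eqref{e1} applies, and show that these errors vanish after multiplication by the normalizer $J(t)e^{(D-\delta)t}$. This calls for the uniform continuity of the $\psi_i$, a uniform-in-$t$ domination of the thickened correlations of the type recorded in \eqref{fi}, and a final approximation argument, in the spirit of Steps~2 and~3 of Proposition~\ref{m2}, passing from box functions to arbitrary $\psi_i\in C_c(\Gamma\ba G)$ while keeping the accumulated error constant bounded.
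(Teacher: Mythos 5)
Your proposal is correct and takes essentially the same route as the paper: the paper proves Theorem \ref{com} precisely by invoking Roblin's transfer technique as implemented in \cite[Theorem 5.8]{OW} (with $H$ replaced by $N^-AM$), whose stated key ingredients --- the common local product structure of $\mathsf{m}^{\Haar}$, $\mathsf{m}^{\BMS}$, $\mathsf{m}^{\BR_\pm}$ and the transversal-intersection analysis of flowed horospherical pieces --- are exactly the leafwise decomposition, the $e^{Dt}$-versus-$e^{\delta t}$ Jacobian comparison producing the factor $e^{(D-\delta)t}$, and the thickening/uniform error control you outline. You have in effect reconstructed the cited argument rather than found a different one.
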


The main idea of this theorem appeared first in Roblin's thesis \cite{Ro} and was further developed and used in 
(\cite{OS}, \cite{MO}, \cite{OW}).
The key ingredients of the arguments are the product structures of the measures $m^{\BMS}$ and $m^{\Haar}$ and
the study of the transversal intersections for the translates of horospherical pieces by the flow $a_t$.
The verbatim repetition of
 the proof of \cite[Theorem 5.8]{OW} while replacing $H$ by $N^-AM$
proves Theorem \ref{com}.

 Using theorem \ref{com},  we deduce the following  from Theorem \ref{m1}:
\begin{thm} \label{mix22}
 Let $\psi_1, \psi_2\in C_c(\Gamma \ba G)$.
Then
$$ \lim_{t\to +\infty} {t^{d/2} e^{(D-\delta)t}} \int_{\Gamma \ba G}\psi_1(ga_t) \psi_2(g) \; d\mathsf{m}^{\Haar}(g)= 
 \frac{\mathsf{m}^{\op{BR_+}}( \psi_1) \mathsf{m}^{\op{BR}_-}( \psi_2)}{(2\pi\sigma)^{d/2} \mathsf m^{\BMS}(\Gamma_0\ba G)}.
$$
\end{thm}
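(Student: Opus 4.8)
The plan is to obtain Theorem \ref{mix22} as an essentially immediate consequence of Theorem \ref{m1} and Theorem \ref{com}; the only work is choosing the renormalizing function $J$ correctly and passing from functions supported on $\Omega$ to general functions in $C_c(\Gamma\ba G)$. All of the substantive analytic content has already been carried out.

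First I would set $J(t)=(2\pi\sigma)^{d/2}\,t^{d/2}$ and verify that the hypothesis \eqref{e1} of Theorem \ref{com} holds for this $J$ and all $\psi_1,\psi_2\in C_c(\Gamma\ba G)$. Since $\mathsf{m}^{\BMS}$ is supported on $\Omega$, and $\Omega$ is $a_t$-invariant (being the support of the $a_t$-invariant measure $\mathsf{m}^{\BMS}$), the correlation integral $\int\psi_1(ga_t)\psi_2(g)\,d\mathsf{m}^{\BMS}(g)$ depends only on the restrictions $\psi_i|_\Omega$, which lie in $C_c(\Omega)$; likewise $\mathsf{m}^{\BMS}(\psi_i)=\mathsf{m}^{\BMS}(\psi_i|_\Omega)$. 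Applying Theorem \ref{m1} to $\psi_1|_\Omega,\psi_2|_\Omega$ and multiplying through by $(2\pi\sigma)^{d/2}$ yields precisely
$$\lim_{t\to+\infty} J(t)\int\psi_1(ga_t)\psi_2(g)\,d\mathsf{m}^{\BMS}(g)=\mathsf{m}^{\BMS}(\psi_1)\,\mathsf{m}^{\BMS}(\psi_2),$$
which is exactly \eqref{e1}.

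Having checked the hypothesis, I would invoke Theorem \ref{com} to conclude that \eqref{e2} holds for the same $J$, namely
$$\lim_{t\to+\infty}(2\pi\sigma)^{d/2}\,t^{d/2}e^{(D-\delta)t}\int\psi_1(ga_t)\psi_2(g)\,d\mathsf{m}^{\Haar}(g)=\mathsf{m}^{\BR_+}(\psi_1)\,\mathsf{m}^{\BR_-}(\psi_2).$$
Dividing both sides by $(2\pi\sigma)^{d/2}$ gives exactly the assertion of Theorem \ref{mix22}.

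I expect no serious obstacle here. The genuinely hard analysis — the $t^{d/2}$-renormalized local mixing with respect to $\mathsf{m}^{\BMS}$, ultimately resting on the spectral theory of the operators $L_{z,v,\mu}$ and the Fourier-analytic extraction of the main term in Theorem \ref{pq} — is packaged in Theorem \ref{m1}, while the transfer from the BMS measure to the Haar measure (which produces the factor $e^{(D-\delta)t}$ and replaces the two BMS factors by $\mathsf{m}^{\BR_+}$ and $\mathsf{m}^{\BR_-}$) is packaged in Theorem \ref{com}, whose proof is cited as a verbatim repetition of \cite[Theorem 5.8]{OW} with $H$ replaced by $N^-AM$. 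The only point requiring a word of care is the harmless restriction argument connecting $C_c(\Omega)$ and $C_c(\Gamma\ba G)$, which is justified by the $a_t$-invariance of $\Omega=\operatorname{supp}(\mathsf{m}^{\BMS})$.
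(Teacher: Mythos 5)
Your proposal is correct and coincides with the paper's own derivation: the paper proves Theorem \ref{mix22} precisely by applying the transfer principle of Theorem \ref{com} to Theorem \ref{m1}, which amounts to your choice $J(t)=(2\pi\sigma)^{d/2}t^{d/2}$ followed by division by $(2\pi\sigma)^{d/2}$. Your one point of care, restricting $\psi_i$ to $\Omega=\operatorname{supp}(\mathsf{m}^{\BMS})$ using the $a_t$-invariance of $\Omega$ so that Theorem \ref{m1} (stated for $C_c(\Omega)$) yields hypothesis \eqref{e1} for all of $C_c(\Gamma\ba G)$, is handled correctly.
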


Theorems \ref{mm1} and \ref{mix123}  are consequences of this theorem: if $\mu\in \mathcal P_{\mathsf{acc}}(\Gamma\ba G)$, then
$d\mu=\psi_2\, d\mathsf{m}^{\Haar}$ for some $\psi_2\in C_c(\Gamma\ba G)$ with $\int \psi_2 \, d\mathsf{m}^{\Haar}=1$.
Hence $$\int \psi_1 d\mu_t =\int \psi_1(ga_t) \psi_2(g)  \; d\mathsf{m}^{\Haar}(g).$$
Hence Theorem \ref{mm1} follows  if we put $\alpha(t)={t^{d/2} e^{(D-\delta)t}} $ and $c_\mu:=  \frac{\mathsf{m}^{\op{BR}_-}( \psi_2)}{(2\pi\sigma)^{d/2} \mathsf m^{\BMS}(\Gamma_0\ba G)}$.

For Theorem \ref{mix123}, note that when $\Gamma_0<G$ is cocompact, 
all the measures $\mathsf{m}^{\op{BR}_+}$, $\mathsf{m}^{\op{BR}_-}$ and $\mathsf{m}^{\op{BMS}}$ coincide with
$\mathsf{m}^{\Haar}$. Hence $c_\mu =\frac{1}{(2\pi\sigma)^{d/2} \mathsf m^{\Haar}(\Gamma_0\ba G)}$ depends only on $\Gamma$.

 \section{The $A$-ergodicity of generalized BMS measures}
 In this section, let $\Gamma$ be a non-elementary discrete subgroup of $G=\op{Isom}_+(\tilde X)$, and
  $\Omega\subset \Gamma \ba G/M$ denote the non-wandering set of the geodesic flow $\{a_t\}$.

\subsection{Generalized BMS-measures}
Let $\tilde F$ be a $\G$-invariant H\"{o}lder continuous function on $\T^1(\tilde X)$. Let $\chi:\Gamma\to \br$ be an additive character of $\Gamma$.

For all $x\ne y\in \tilde{X}$, we define $$\int_{x}^{y}\tilde{F}:=\int_{0}^{d(x,y)}\tilde{F}(va_t)dt$$ 
where $v$ is the unique  unit tangent vector based at $x$
such that $va_t$ is a vector based at $y$. The Gibbs cocycle for the potential $\tilde F$ is a map $C_{\tilde F}: \partial_\infty \tilde X \times \tilde X\times
\tilde X\to\br$ defined by
$$(\xi,x,y)\mapsto C_{\tilde F,\xi}(x,y)=\lim_{t\to +\infty} \int_y^{\xi_t} \tilde F -\int_x^{\xi_t} \tilde F$$
where $t\mapsto \xi_t$ is any geodesic ray toward the point $\xi$.

\begin{Def}\rm For $\sigma \in \br$, a twisted conformal density of dimension $\sigma$ for $(\Gamma, \tilde F, \chi)$
is a family of finite measures $\{\nu_x: x\in \tilde X\}$ on $\partial(\tilde X)$
such that for any $\gamma\in \G$, $x,y\in \tilde X$ and $\xi\in \partial(\tilde X)$,
$$\gamma_* \mu_x=e^{-\chi(\gamma)} \mu_{\gamma x}\quad \text{ and }\quad
\frac{d\mu_x}{d\mu_y} (\xi)=e^{-C_{\tilde F-\sigma,\xi}(x,y)}.$$\end{Def}

The twisted critical exponent $\delta_{\Gamma,\tilde F,\chi}$ of $(\Gamma, \tilde F, \chi)$ is given by
$$\limsup_{n\to +\infty} \frac{1}{n}\log \sum_{\gamma\in \Gamma, n-1<d(o,\gamma(o))\le n}
\exp \left(\chi(\gamma)+\int_{o}^{\gamma(o)}\tilde F\right).$$
When $\chi$ is trivial, we simply write it as $\delta_{\Gamma,\tilde F}$.
It can be seen that $\delta_{\Gamma,\tilde F}\le \delta_{\Gamma, \tilde F, \chi}$ for any character $\chi$ of $\Gamma$.

Suppose that $$\delta_{\Gamma, \tilde F, \chi}<\infty.$$ Then there exists a twisted conformal density of dimension $\delta_{\Gamma, \tilde F, \chi}$ for $(\Gamma, \tilde F, \chi)$ whose support is precisely the limit set of $\Gamma$; we call it a twisted Patterson-Sullivan density, or a twisted PS density for brevity. 
{Denote $\iota: \T^1(\tilde{X})\to \T^1(\tilde{X})$ to be the flip map, $v\mapsto -v$. It is shown in \cite[Proposition 11.8]{PPS} that $\delta_{\Gamma,\tilde{F},\chi}=\delta_{\Gamma, \tilde{F}\circ \iota,-\chi}$. We define the following generalized BMS measure:
\begin{defn}[Generalized BMS measures]
 Let $\{\mu_x:x\in \tilde{X}\}$ and $\{\mu_x^{\iota}:x\in \tilde{X}\}$ be twisted PS densities for $(\Gamma,\tilde{F},\chi)$ and $(\Gamma,\tilde{F}\circ \iota,-\chi)$ respectively. 
 Set $\delta_0=\delta_{\Gamma, \tilde{F}, \chi}$. A generalized BMS measure $\tilde{\mathsf{m}}=\tilde{\mathsf{m}}_{\Gamma}$ on $\T^1(\tilde X)=G/M$ associated to the pair
 $\{\mu_x:x\in \tilde{X}\}$ and $\{\mu_x^{\iota}:x\in \tilde{X}\}$
is defined by
\be\label{gems} d\tilde {\mathsf{m}} (u)= e^{C_{\tilde F-\delta_0, u^+}(o, u) +C_{\tilde F\circ \iota-\delta_0, u^-}(o, u)}d\mu_o(u^+)d\mu^{\iota}_o(u^-) ds\ee
using the Hopf parametrization of $\T^1(\tilde X)$. 
\end{defn}
By abuse
of notation, we use the notation $\tilde {\mathsf{m}}$ for the $M$-lift of $\tilde {\mathsf{m}}$ to $G$.

As $\chi$ and $-\chi$ cancels with each other, we can check that
the measure $\tilde{\mathsf{m}}_{\Gamma}$ is $\G$-invariant.
It induces  
\begin{itemize}
\item an $A$-invariant measure $\mathsf{m}^\dag_{\Gamma}$ on $\Gamma\ba G/M$ supported on $\Omega$ and
\item an $AM$-invariant measure $\mathsf{m}_{\Gamma}$ on  $\Gamma\ba G$. 
\end{itemize}

When there is no ambiguity about $\Gamma$, we will drop the subscript $\Gamma$ for simplicity.
When $\tilde{F}=0$ and $\chi$ is the trivial character, $\mathsf{m}$ is precisely equal to the BMS measure $\mathsf m^{\BMS}$
on $\Gamma\ba G$ defined in section 2.

\subsection{The $A$-ergodicity of generalized BMS measures}
The generalized Sullivan's dichotomy says that the dynamical system
 $(\Gamma\ba G/M, A,  \mathsf{m}^\dag)$ is either conservative and ergodic, or completely dissipative
 and non-ergodic \cite{PPS}.}

We will extend this dichotomy for the $A$-action on $(\Gamma\ba G, \mathsf m)$ using the density of the transitivity group
shown by Winter.

 \begin{defn}[Transitivity group]
 Fix $g\in \Omega$.
 We define the transitivity subgroup $ \mathcal{H}_{\G}(g)<AM$
 as follows: $ma \in \mathcal{H}_{\G}(g)$ if and only if
    there is a sequence $h_i\in N^-\cup N^+$, $i=1,\ldots, k$ and $\gamma \in \G $ such that
 \begin{align*}
  &\gamma gh_1h_2\ldots h_r\in \Omega\,\,\,\text{for all}\,\,\, 0\leq r\leq k,\,\,\,\text{and}\\
 &\gamma g h_1h_2\ldots h_k=gam. 
 \end{align*}
 \end{defn}

\begin{lem} \cite[Theorem 3.14]{Win} \label{win} Let $\G$ be Zariski dense.
The transitivity group $\mathcal H_\Gamma(g) $ is dense in $AM$ for any $g\in\Omega$.
\end{lem}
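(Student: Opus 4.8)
The plan is to show that the closure $\overline{\mathcal H_\Gamma(g)}$ is a closed subgroup of $AM\cong A\times M$ that surjects onto the $A$-factor and meets $M$ in all of $M$; any such subgroup must be $AM$ itself. (This is \cite[Theorem 3.14]{Win}, and I sketch the argument.)

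First I would check that $\mathcal H:=\mathcal H_\Gamma(g)$ is genuinely a subgroup of $AM$. Given $a_1m_1,a_2m_2\in\mathcal H$ with witnessing sequences of $N^\pm$-jumps that remain in $\Omega$ and close up through $\gamma_1,\gamma_2\in\Gamma$, I concatenate the two broken orbits: since $AM$ commutes past $N^{\pm}$ up to replacing one horospherical element by another (the relations $amN^{\pm}=N^{\pm}am$), translating the second orbit by $a_1m_1$ keeps it inside the $AM$-invariant set $\Omega$ and witnesses the product $a_1m_1a_2m_2$; reversing a sequence and using $\gamma^{-1}$ witnesses inverses. Thus $\mathcal H$ is a subgroup and $\overline{\mathcal H}$ is a closed subgroup of $AM$.

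Next, surjectivity onto $A$. Since $\Gamma$ is Zariski dense, hence non-elementary, the geodesic flow is topologically transitive on $\Omega$ and its length spectrum generates a dense additive subgroup of $\br$ (non-arithmeticity, cf. \cite{Ki} and Lemma \ref{sha}). Using the horospherical jumps to travel between near-returning orbits while staying in $\Omega$, one realizes elements $a_tm\in\mathcal H$ with $t$ dense in $\br$; as $\pi_A\colon\mathcal H\to A$ is a homomorphism (because $A$ and $M$ commute) with dense image, $\pi_A(\overline{\mathcal H})=A$. Writing $K:=\overline{\mathcal H}\cap M$, it then suffices to prove $K=M$, since combining $K=M$ with surjectivity onto $A$ gives $\overline{\mathcal H}=AM$.

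The remaining step, $K=M$, is the heart of the matter and is exactly where Zariski density of $\Gamma$ is indispensable; I expect it to be the main obstacle. In every rank-one case $M$ is connected ($\SO(n-1)$, $\mathrm U(n-1)$, $\Sp(n-1)\times\Sp(1)$ or $\Spin(7)$), so it is enough to show that the $M$-holonomies recorded by $\mathcal H$ are Zariski dense in $M$: a Zariski-dense closed subgroup of a connected compact group is the whole group. These holonomies include the rotational parts $m_\gamma$ of the loxodromic elements $\gamma\in\Gamma$ (each loxodromic $\gamma$ being conjugate to $a_{\ell_\gamma}m_\gamma\in AM$), reached from $g$ through suitable $N^\pm$-jumps; were their closure contained in a proper algebraic subgroup $M'\subsetneq M$, the resulting reduction of holonomy would force $\Gamma$ to preserve a proper algebraic structure in $G$, contradicting Zariski density. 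The mechanism producing enough distinct holonomies is precisely the Prasad--Rapinchuk type input \cite{PR} already exploited in Proposition \ref{den}. Hence $K=M$ and $\overline{\mathcal H}=AM$, as claimed.
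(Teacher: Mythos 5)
First, note what you are up against: the paper gives no proof of this lemma at all --- it is quoted verbatim from Winter \cite[Theorem 3.14]{Win} and used as a black box --- so your sketch must be judged against Winter's actual argument. Your first two steps (the subgroup property via conjugating the $N^{\pm}$-jumps by $AM$, and density of the $A$-projection via non-arithmeticity of the length spectrum) are consistent with that argument. The genuine gap is in your final step, which is indeed the heart of the matter, and your argument for it does not work. Showing that the holonomies $m_\gamma$ generate a dense (or Zariski dense) subgroup of $M$ does \emph{not} yield $K:=\overline{\mathcal H}\cap M=M$: the elements of $\mathcal H$ produced by closing up loxodromic orbits have the form $a_{\ell(\gamma)}m_\gamma$ with a nontrivial $A$-component, so density of the $m_\gamma$'s gives at best that the projection $\pi_M(\overline{\mathcal H})$ is dense in $M$. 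A closed subgroup of $A\times M$ can surject onto both factors and still be proper. Concretely, when $Z(M)\neq 1$ (e.g.\ $G=\op{PSL}_2(\c)$ or $\SU(n,1)$), the set
$$L=\bigl\{\,(a_t,\,m)\in A\times M:\ [m]=ct \ \text{ in } Z(M)=M/[M,M]\,\bigr\}, \qquad c\neq 0,$$
is a closed subgroup projecting \emph{onto} both $A$ and $M$, yet $L\cap M=[M,M]\subsetneq M$. So your inference from ``holonomies Zariski dense in $M$'' to ``$K=M$'' is invalid, and the vague appeal to ``$\Gamma$ preserving a proper algebraic structure'' addresses only the dense-projection statement, not this graph obstruction.

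What is actually needed, and what the known proof supplies, is the exclusion of a continuous character correlation $[m_\gamma]=\phi(\ell(\gamma))$ in $Z(M)$ across the spectrum. Commutators come for free: since $A$ is central in $AM$, commutators of elements of $\overline{\mathcal H}$ lie in $M$, so dense $M$-projection already forces $K\supseteq[M,M]$; the entire difficulty is the central circle. This very paper confronts the identical issue in Theorem \ref{sp2}, where the orientation-reversal symmetry of Lemma \ref{rev} (for each closed geodesic, another of the same length with opposite central holonomy, which kills any relation $[m_\gamma]=\phi(\ell(\gamma))$ by forcing $2[m_\gamma]\in 2\pi\z$) is combined with the Prasad--Rapinchuk element of Proposition \ref{den}(2) generating a dense subgroup of $Z(M)$. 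Your sketch contains neither ingredient, so the step $K=M$ --- which you correctly identify as the crux --- is missing. A minor further inaccuracy: $M$ need not be connected for every connected linear rank-one $G$ (e.g.\ $\op{SL}_2(\br)$ has $M=\{\pm I\}$); connectedness holds for the adjoint groups you list, so your reduction to Zariski density of a closed subgroup is fine only under that restriction.
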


\begin{thm} \label{corr}Suppose that $\Gamma$ is Zariski dense.
Let $\mathsf{m}$ be a generalized BMS-measure on $\Gamma\ba G$ associated to $(\Gamma,\tilde F, \chi)$. 
If $(\Gamma\ba G/M, A,  \mathsf{m}^\dag)$ is conservative, then  $(\Gamma\ba G, A, \mathsf{m})$ is conservative and ergodic.

In particular, if $\Gamma$ is of divergence type, then
$(\Gamma\ba G, A, \mathsf{m}^{\BMS})$ is conservative and ergodic. \end{thm}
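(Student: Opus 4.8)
The plan is to exploit the compactness of the fiber $M$ of the bundle $\pi\colon \Gamma\ba G \to \Gamma\ba G/M$ in order to transfer conservativity upward from the base, to invoke the generalized Sullivan dichotomy to upgrade conservativity to ergodicity \emph{on the base}, and finally to promote ergodicity from the base to the total space by means of the transitivity group together with Winter's density result (Lemma \ref{win}).

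First I would establish conservativity of $(\Gamma\ba G, A, \mathsf{m})$. Since $\mathsf{m}$ is the $M$-lift of $\mathsf{m}^\dag$, locally it is the product of $\mathsf{m}^\dag$ with the Haar measure on the compact group $M$; hence for any strictly positive $F \in L^1(\Gamma\ba G/M, \mathsf{m}^\dag)$ the pullback $F\circ\pi$ is a strictly positive element of $L^1(\Gamma\ba G, \mathsf{m})$. As $\pi$ is $A$-equivariant and $\pi_*\mathsf{m}$ is proportional to $\mathsf{m}^\dag$, conservativity of $\mathsf{m}^\dag$ gives $\int_0^\infty (F\circ\pi)(ga_t)\,dt = \int_0^\infty F(\pi(g)a_t)\,dt = \infty$ for $\mathsf{m}$-a.e.\ $g$, and Hopf's criterion then yields conservativity of $(\Gamma\ba G, A, \mathsf{m})$. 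By the generalized Sullivan dichotomy quoted above, conservativity of $(\Gamma\ba G/M, A, \mathsf{m}^\dag)$ moreover forces its ergodicity, which I will use at the end.

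It remains to prove ergodicity of $(\Gamma\ba G, A, \mathsf{m})$. Let $\phi \in L^2(\Gamma\ba G, \mathsf{m})$ be $A$-invariant. Since the system is conservative, the Hopf ratio ergodic theorem applies: comparing forward and backward Birkhoff ratios along the flow shows that $\phi$ agrees a.e.\ with a function constant along strong stable leaves and with one constant along strong unstable leaves. As $\phi$ is $a_t$-invariant, both limits coincide with $\phi$ itself, so $\phi$ is invariant (a.e.)\ under both horospherical subgroups $N^+$ and $N^-$ in addition to $A$. The remaining, and \emph{hardest}, step is to deduce $M$-invariance. Because $\phi$ comes from a $\Gamma$-invariant function on $G$ and is invariant under motions along $N^+$ and $N^-$ that remain in $\Omega=\supp(\mathsf{m})$, the very definition of the transitivity group gives $\phi(g)=\phi(gam)$ for every $am\in\mathcal H_\Gamma(g)$, for $\mathsf{m}$-a.e.\ $g$; by Lemma \ref{win}, $\mathcal H_\Gamma(g)$ is dense in $AM$. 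The obstacle is that this invariance is a priori only pointwise along the $g$-dependent dense subgroup $\mathcal H_\Gamma(g)$, whereas I need a global $L^2$-invariance under a fixed subgroup. I would resolve this by combining the measurable dependence of the construction on $g$ with the strong continuity of the $AM$-action on $L^2(\Gamma\ba G,\mathsf{m})$: the essential stabilizer of $\phi$ in $AM$ is a closed subgroup containing the transitivity elements, hence is dense and therefore all of $AM$, and in particular contains $M$.

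Thus $\phi$ is essentially $M$-invariant; being also $A$-invariant, it descends to an $A$-invariant function on $\Gamma\ba G/M$, which is constant by the ergodicity of $\mathsf{m}^\dag$ established in the first step. This proves that $(\Gamma\ba G, A, \mathsf{m})$ is conservative and ergodic. For the final assertion, when $\Gamma$ is of divergence type the classical Hopf–Tsuji–Sullivan–Roblin theorem guarantees that the geodesic flow on $\Gamma\ba G/M$ is conservative with respect to the Bowen–Margulis–Sullivan measure; applying the main part to the case $\tilde F=0$ and $\chi$ trivial, for which $\mathsf{m}=\mathsf{m}^{\BMS}$, yields that $(\Gamma\ba G, A, \mathsf{m}^{\BMS})$ is conservative and ergodic.
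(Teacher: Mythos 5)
Your overall architecture coincides with the paper's: conservativity is lifted through the compact fiber $M$ (the paper does this by noting that the conservative part $\Omega_C$ satisfies $\Omega_C=\Omega_C M$ since $A$ and $M$ commute, which is equivalent to your Hopf-criterion pullback argument), the generalized Sullivan dichotomy of \cite{PPS} upgrades conservativity to ergodicity on the base, the Hopf ratio ergodic theorem reduces ergodicity on $\Gamma\ba G$ to showing that measurable functions invariant under both horospherical subgroups are constant, and the density of the transitivity group (Lemma \ref{win}) enters at the last step. Up to that point your proof is sound, including the divergence-type corollary via Sullivan.

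However, the step you yourself flag as hardest is where the argument genuinely breaks, in two ways. First, $\phi$ is only \emph{essentially} $N^{\pm}$-invariant; one can choose (by \cite{Zi}) a version that is everywhere $N^+$-invariant and another that is everywhere $N^-$-invariant, but these agree only almost everywhere, so along a transitivity chain $\gamma g h_1\cdots h_k$ — whose intermediate points $\gamma g h_1\cdots h_r$ are specific points, possibly in a null set — you cannot alternate between the two versions; hence the claimed pointwise identity $\phi(g)=\phi(gam)$ for all $am\in\mathcal H_{\Gamma}(g)$ is not available for a.e.\ $g$. Second, even granting that identity, your ``essential stabilizer'' argument contains a quantifier inversion: the essential stabilizer consists of those \emph{fixed} $am\in AM$ with $\phi(\cdot\, am)=\phi$ $\mathsf{m}$-a.e., whereas the transitivity property gives, for a.e.\ $g$, a dense but $g$-\emph{dependent} set of good elements; for a fixed $am$ the slice $\{g: am\in\mathcal H_{\Gamma}(g)\}$ could a priori be $\mathsf{m}$-null, so density of each fiber $\mathcal H_{\Gamma}(g)$ produces no dense subset of the essential stabilizer, and its closedness then proves nothing. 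This is exactly the content the paper outsources to \cite[Thm.\ 4.3]{Win}: one works instead with the sigma-algebras $\Sigma_{\pm}$ of $\Gamma$- and $N^{\pm}$-saturated Borel sets and their intersection $\tilde\Sigma=\Sigma_-\wedge\Sigma_+$, and Winter's theorem — whose proof uses the local product structure of $\tilde{\mathsf{m}}$ and, as the paper observes, does not require finiteness of the measure — shows that density of the transitivity group together with the $AM$-ergodicity of $\mathsf{m}$ (which here follows from the $A$-ergodicity of $\mathsf{m}^{\dag}$ supplied by the Sullivan dichotomy) forces $\tilde\Sigma$ to be trivial, and hence $\phi$ to be constant. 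To complete your proof you must replace the stabilizer shortcut by this sigma-algebra statement or reprove its content.
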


\begin{proof} Since the $A$-action on $(\Gamma\ba G/M, \mathsf{m}^\dag)$ is conservative
 and $\Gamma\ba G$
is a principal $M$-bundle over $\Gamma\ba G/M$, it follows that the $A$-action on $(\Gamma\ba G,\mathsf{m})$ is conservative as well:
 we can decompose  $\Gamma\ba G$  as $\Omega_C \cup \Omega_D$ where $\Omega_C$ and $\Omega_D$ are
 respectively the conservative and the dissipative parts
 of the $A$-action, that is,
$x \in \Omega_C$ iff $ xa_{t_i}$ comes back to a compact subset for some $t_i \to \infty$.
Note that $\Omega_C M$ is the conservative part for the geodesic flow, and must have the full $\mathsf{m}^{\BMS}$-measure
by the assumption. Since $a_t$ and $M$ commutes, $\Omega_C=\Omega_CM$, hence the claim follows.

We will now prove the $A$-ergodicity using the conservativity of the $A$-action, following Sullivan's argument 
which is based on Hopf's ratio ergodic theorem (see also \cite{Ro}, \cite{CI}).

Fix a positive Lipschitz map $\rho:\Gamma\ba G \to \br$ with $\int \rho d\, \mathsf{m} =1$;
fix $o\in \Omega$ and fix a positive continuous non-increasing function $r$ on $\br_{>0}$
which is affine on each $[n, n+1]$ and $r(n)=1/(2^n \mathsf{m}(B(o, n+1)))$.
Then $g\mapsto r( d(o, g(o)))$ is Lipschitz and belongs to $L^1(\mathsf{m})$; so by normalizing it, we get a function $\rho$ with desired properties (see \cite[Ch 5]{PPS} for more details).

By the conservativity,  $$\int_{0}^{\pm \infty} \rho (xa_t) dt =\pm \infty$$
for $\mathsf{m}$-almost all $x\in \G\ba G$.

 Now by the conservativity of  the $A$-action, we can apply the Hopf ratio theorem which  says  for any $\psi \in C_c(\Gamma \ba G)$,
$$\lim_{T\to {\pm \infty}}\frac{\int_{0}^T \psi (xa_t) dt}{\int_{0}^T \rho(xa_t) dt}$$
converges almost everywhere to an $L^1$-function $\tilde \psi^{\pm}$, and $\tilde \psi^+=\tilde \psi^-$ almost everywhere.
Moreover the $A$-action is ergodic if and only if $\tilde \psi^{\pm}$ is constant almost everywhere.

Using the uniform continuity of $\psi$ and $\rho$, we can first show that
 the limit functions $\tilde \psi^{\pm}$ coincide a.e. with an $N^+$ and $N^-$ invariant measurable function $\tilde \psi$. 
 Denote by $\psi^*$ the lift to $\tilde \psi$ to $G$.   
 Consider the Borel sigma algebra $\mathcal B(G)$ on $G$, and define
 subalgebras
 $\Sigma_{\pm}:=\{B\in \mathcal B(G): B=\Gamma B N^{\pm}\}$
 and $\tilde \Sigma:=\Sigma_-\wedge \Sigma_+$. That is, $B\in \tilde \Sigma$ if and only if there exist $B_{\pm}\in \Sigma_{\pm}$
 such that $\tilde {\mathsf{m}} (B\Delta B_{\pm})=0$.
 It is shown in \cite[Thm.4.3]{Win} that the density of the transitivity group and the ergodicity of $AM$-action on
 $(\Gamma\ba G, \tilde {\mathsf{m}})$ implies that
 $\tilde \Sigma$ is trivial (we note that the proof of \cite[Thm.4.3]{Win} does not require the finiteness of $\tilde {\mathsf{m}}$).
Both conditions are satisfied under our hypothesis by Lemma \ref{win} and the ergodicity of the system
$(\Gamma\ba G/M, A, \mathsf{m}^\dag)$ as remarked before.
 It now follows that  $\psi^*$  coincides with a constant function almost everywhere.
 This proves the $A$-ergodicity on $(\Gamma\ba G, \mathsf{m})$.
 
 The last part follows since $(\Gamma\ba G/M, A, \mathsf{m}^{\BMS})$ is conservative and ergodic when $\Gamma$ is of divergence
 type \cite{Su}.\end{proof}
 
\begin{cor}\label{twist} Let $\Gamma_0$ be a Zariski dense
 convex cocompact subgroup of $G$.
  Let $\tilde F$ be a $\G_0$-invariant H\"{o}lder continuous function on $\tilde X$
  and $\chi:\Gamma_0\to \br$ be a character. Suppose $\delta_{\Gamma_0, \tilde F, \chi}<\infty$.

\begin{enumerate} 
\item If  $\mathsf{m}_{\Gamma_0}$ is a  generalized BMS-measure on $\Gamma_0\ba G$ 
 associated to $(\Gamma_0,\tilde F, \chi)$, then $(\Gamma_0\ba G, A, \mathsf{m}_{\Gamma_0})$ is ergodic and conservative.

\item Let $\Gamma <\Gamma_0$ be a normal subgroup with $\Gamma\ba \Gamma_0=\z^d$, and $\chi=0$ be the trivial character.
Let  $\mathsf{m}_{\Gamma}$ be the measure on $\Gamma\backslash G$ induced by the generalized BMS-measure $\tilde{\mathsf{m}}_{\Gamma_0}$ on $G$  associated to  $(\Gamma_0,\tilde F, 0)$. Then
 $(\Gamma \ba G, A, \mathsf{m}_{\Gamma})$ is ergodic and conservative if and only if $d\le 2$.
\end{enumerate}
 \end{cor}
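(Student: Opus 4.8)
The plan is to handle the two parts separately, in each case reducing the assertion to a statement about conservativity of the $A$-action on $\Gamma\ba G/M$ and then invoking Theorem~\ref{corr}.

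For (1): since $\Gamma_0$ is convex cocompact, the non-wandering set $\Omega_0\subset\Gamma_0\ba G/M$ is compact, and the generalized BMS measure $\mathsf{m}^\dag_{\Gamma_0}$ is a Radon measure supported on $\Omega_0$, hence finite. A finite $A$-invariant measure is conservative by the Poincar\'e recurrence theorem, so $(\Gamma_0\ba G/M, A, \mathsf{m}^\dag_{\Gamma_0})$ is conservative. As $\Gamma_0$ is Zariski dense, Theorem~\ref{corr} then upgrades this to conservativity and ergodicity of $(\Gamma_0\ba G, A, \mathsf{m}_{\Gamma_0})$, which is the claim.

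For (2): first I would record that, by Theorem~\ref{corr} together with the generalized Sullivan dichotomy, $(\Gamma\ba G, A, \mathsf{m}_\Gamma)$ is ergodic and conservative if and only if $(\Gamma\ba G/M, A, \mathsf{m}^\dag_\Gamma)$ is conservative: if the latter is conservative, Theorem~\ref{corr} yields conservativity and ergodicity upstairs; if it is not, the dichotomy forces complete dissipativity, and the identity $\Omega_C=\Omega_C M$ from the proof of Theorem~\ref{corr} transports complete dissipativity, hence non-ergodicity, to $\Gamma\ba G$. This reduces the problem to deciding conservativity of the geodesic flow on the $\z^d$-cover. Passing to the symbolic model of Section~2, this is equivalent to recurrence of the $\z^d$-valued cocycle $f$ over the shift $(\Sigma,\sigma)$ carrying the Gibbs measure $\bar\nu$ that represents $\mathsf{m}^\dag_{\Gamma_0}$, that is, to conservativity of the skew product $(x,\xi)\mapsto(\sigma x,\xi+f(x))$ on $\Sigma\times\z^d$ with respect to $\bar\nu\times(\text{counting})$.

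Two inputs drive the recurrence analysis. First, because $\chi=0$ the generalized BMS measure is built symmetrically from the densities for $\tilde F$ and $\tilde F\circ\iota$, so $\mathsf{m}^\dag_{\Gamma_0}$ is invariant under the flip $\iota\colon v\mapsto -v$; since $f$ is odd under $\iota$, this forces the cocycle to have zero drift, $\int f\,d\bar\nu=0$. Second, the transfer-operator analysis of Section~3 (Theorem~\ref{mrr}), extended to this Gibbs potential exactly as in the Remark following Theorem~\ref{m1}, yields a local limit theorem for $f$: the return probabilities satisfy $\bar\nu(\{x:f_n(x)=0\})\sim C\,n^{-d/2}$ for a positive constant $C$ governed by the covariance $\nabla^2P(0)$ of \eqref{ds}. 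By the Halmos recurrence criterion, conservativity of the skew product is equivalent to $\sum_n \mathbf 1_{f_n(x)=0}=\infty$ for $\bar\nu$-a.e.\ $x$, a sum whose expectation is $\sum_n \bar\nu(\{f_n=0\})\sim C\sum_n n^{-d/2}$.

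The two cases then separate. For $d\ge 3$ this series converges, so $\sum_n\mathbf 1_{f_n=0}<\infty$ almost everywhere, the skew product is dissipative, and $(\Gamma\ba G, A, \mathsf{m}_\Gamma)$ is completely dissipative and not ergodic. For $d\le 2$ the series diverges, and the real work is to promote divergence of the expectation to almost-everywhere divergence of the sum. I would do this by a second-moment (Paley--Zygmund) argument: using the exponential mixing of $\bar\nu$ coming from the spectral gap in Theorem~\ref{mrr}, together with the joint local limit asymptotics for the pairs $(f_n,f_m-f_n)$, one shows $\mathbb{E}\big[(\sum_{n\le N}\mathbf 1_{f_n=0})^2\big]\ll\big(\sum_{n\le N}\bar\nu(\{f_n=0\})\big)^2$, so the sum is infinite with positive probability; since recurrence is a $\sigma$-invariant event and $\bar\nu$ is ergodic, a $0$--$1$ law upgrades this to almost everywhere, giving conservativity and hence, via Theorem~\ref{corr}, ergodicity. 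This last step, especially the borderline case $d=2$ where the divergence is only as slow as $\sum n^{-1}$, is the main obstacle: it demands genuinely uniform control of the correlations, which is precisely where the spectral information of Section~3 is needed in place of a soft central limit theorem alone, and where our argument strengthens those of Rees~\cite{Re} and Yue~\cite{Yu}.
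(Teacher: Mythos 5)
Your part (1) is exactly the paper's proof: compactness of the support of $\mathsf m^\dag_{\Gamma_0}$ gives conservativity on $\Gamma_0\ba G/M$, and Theorem \ref{corr} upgrades this to conservativity and ergodicity on $\Gamma_0\ba G$. Your framing of (2) also matches (conservativity downstairs is equivalent to ergodicity plus conservativity upstairs, via Theorem \ref{corr}, the dichotomy, and $\Omega_C=\Omega_C M$). The divergence is in what comes next: the paper's entire proof of (2) is a citation — the $A$-action on $(\Gamma\ba G/M,\mathsf m^\dag_\Gamma)$ is ergodic and conservative if and only if $d=1,2$, by Rees \cite{Re} and Yue \cite{Yu} — whereas you undertake to reprove that theorem through the symbolic skew product. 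That is legitimate in principle, but your reproof has a genuine hole exactly where you flag it: the second-moment bound $\mathbb E\bigl[\bigl(\sum_{n\le N}\mathbf 1_{f_n=0}\bigr)^2\bigr]\ll\bigl(\sum_{n\le N}\bar\nu(f_n=0)\bigr)^2$ requires a joint local limit theorem for the pair $(f_n,f_m-f_n)$ that is uniform over all gaps $m-n$, including bounded gaps where the spectral asymptotics of Theorem \ref{mrr} say nothing and one must fall back on the Gibbs property; for $d=2$, where $\sum_n n^{-d/2}$ diverges only logarithmically, there is no slack for unquantified errors, and one still needs Kochen--Stone afterwards. Writing ``one shows'' at this point omits precisely the content of Rees's paper, so as a proof the $d\le 2$ direction is incomplete.

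Two further steps need repair, though both are fixable. First, your zero-drift argument does not follow from $\chi=0$ alone: the flip $\iota$ exchanges $u^+$ with $u^-$ and the two density families, so it carries the generalized BMS measure for $\tilde F$ to the one for $\tilde F\circ\iota$; flip-invariance therefore requires the symmetry $\tilde F\circ\iota=\tilde F$. This holds here because the corollary takes $\tilde F$ to be a function on $\tilde X$, i.e.\ of the base point only, but for an asymmetric potential on $\T^1(\tilde X)$ (as in the Section 5 setup) one can have $\int f\,d\bar\nu\ne 0$, in which case $f_n/n\to\int f\,d\bar\nu\ne0$ a.e.\ by the ergodic theorem and the cocycle is transient even for $d=1$; so this hypothesis is doing real work and your justification misattributes it. Second, the event $\{\sum_n\mathbf 1_{f_n=0}=\infty\}$ is not $\sigma$-invariant, since $f_n(\sigma x)=f_{n+1}(x)-f(x)$ shifts the target from $0$ to $f(x)$; the correct $0$--$1$ law (Schmidt) is that the conservative part of the skew product is invariant under $T_f$ and under the commuting $\z^d$-translations, hence has the form $B\times\z^d$ with $B$ $\sigma$-invariant, to which ergodicity of $\bar\nu$ applies. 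On the positive side, your transient direction for $d\ge3$ (uniform local limit upper bound plus Borel--Cantelli over all targets $\xi\in\z^d$, giving $|f_n|\to\infty$ a.e.) is sound modulo extending the Section 3 spectral analysis to the Gibbs potential of $\tilde F$, and there the non-lattice input, Lemma \ref{sha}, is indeed potential-independent as you implicitly use. The short correct route remains the paper's: quote Rees--Yue for the base and apply Theorem \ref{corr}.
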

 \begin{proof} 
 Since $\Gamma_0$ is convex cocompact, $\mathsf{m}_{\Gamma_0}^\dag$ on $\Gamma_0\ba G/M$ is compactly supported and hence
 conservative. Therefore Theorem \ref{corr} shows the claim (1).
 
 For (2), denote by $\mathsf{m}_{\Gamma}^\dag$ the measure on $\Gamma \ba G/M$ induced by $\tilde{\mathsf{m}}_{\Gamma_0}$.
 The $A$-action on $(\Gamma\ba G/M, \mathsf{m}^\dag_{\Gamma})$ is ergodic
 and conservative if and only if $d=1,2$ (\cite{Re},\cite{Yu}). Hence (2) again follows from Theorem \ref{corr}.
  \end{proof}

Theorem \ref{Ree} is a special case of Corolloary \ref{twist}(2).

\section{Babillot-Ledrappier measures and measure classification}

 Let $G$ be a connected simple linear Lie group of rank one. Let $\Gamma_0$ be a discrete subgroup of $G$.
 For a subset $S$ of $G$ and $\e>0$, let $S_\e$ denote the intersection of $S$ and the $\e$-ball of $e$ in $G$. We also use the notation $S_{O(\e)}$ to denote the set $S_{C\e}$ for some constant $C>0$ depending only on $G$ and $\Gamma_0$.
\subsection{Closing lemma.}
Let  $$\mathcal B( \e):= (N_\e^+N^-\cap N_\e^-N^+AM) M_\e A_\e .$$

\begin{lem}[Closing lemma] \cite[Lemma 3.1]{MMO}\label{closing}
There exist $T_0>1$, and $\e_0>0$  depending only on $G$ for which the following holds: 
 if $$g_0\mathcal B(\e) a_T m \cap\gamma g_0 \mathcal B(\e)\ne\emptyset$$ for some $T>T_0$, $0<\e<\e_0$,
 $m\in M$, $g_0\in G$ and $\gamma \in G$, then there exists $g\in g_0\mathcal B(2\e)$ such that
 $$\gamma = g a_0 m_0g^{-1}$$ where
 $a_0 \in A$ and $m_0 \in M$ satisfy
 $a_0\in a_T A_{O( \e)}$ and $m_0 \in m M_{O( \e)}$.
\end{lem}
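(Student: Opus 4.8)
The plan is to convert the almost-closing hypothesis into an exact conjugacy by a Banach fixed-point argument that exploits the hyperbolicity of $a_T$ for large $T$. First I would unwind the nonempty intersection: a common point gives $b_1,b_2\in\mathcal B(\e)$ with
$$g_0 b_1 a_T m = \gamma\, g_0 b_2 ,$$
so that, setting $c:=b_1 a_T m\, b_2^{-1}$, we have $\gamma = g_0\, c\, g_0^{-1}$. Since $b_1,b_2$ lie in the $\e$-box and $A$, $M$ commute, $c$ is an $O(\e)$-perturbation of $a_T m$, i.e. $c\simep a_T m$. It therefore suffices to produce $w\in\mathcal B(2\e)$ with $w^{-1} c\, w = a_0 m_0\in AM$ satisfying $a_0\in a_T A_{O(\e)}$ and $m_0\in m M_{O(\e)}$; then $g:=g_0 w$ gives $g^{-1}\gamma g = w^{-1} c\, w = a_0 m_0$, as required.

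The heart of the matter is this conjugation problem. I would work in horospherical coordinates near $e$, where the product map $N^+\times N^-\times A\times M\to G$ is a diffeomorphism onto a neighborhood of the identity; the two orderings built into the definition $\mathcal B(\e)=(N_\e^+N^-\cap N_\e^-N^+AM)M_\e A_\e$ are precisely what let me pass freely between writing a box element as $N^+$-then-$N^-$ and as $N^-$-then-$N^+AM$. In these coordinates conjugation by $c$ is a small perturbation of conjugation by $a_T m$, and $\Ad(a_T)$ expands $\mathfrak n^+$ while contracting $\mathfrak n^-$ by a factor $\asymp e^{-cT}$ for some $c>0$ depending only on $G$, with $M$ acting by isometric rotations commuting with $\Ad(a_T)$. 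Searching for $w=w^- w^+$ with $w^\pm\in N^\pm$ conjugating $c$ into $AM$ amounts to solving two coupled fixed-point equations for the stable and unstable coordinates of the axis of $c$: the equation for $w^+$ (the repelling horospherical coordinate) is a contraction after applying $c^{-1}(\cdot)c$, and the one for $w^-$ contracts under $c(\cdot)c^{-1}$, each with Lipschitz constant $\asymp e^{-cT}$. For $T>T_0$ both are genuine contractions of the relevant $\e$-balls, so the Banach fixed-point theorem yields unique solutions $w^\pm$ whose sizes are controlled by the $N^\pm$-displacement of $c$ from $a_T m$, hence $w^\pm\in N^\pm_{O(\e)}$.

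Finally I would assemble the estimates. The fixed point gives $w=w^- w^+$ with $w^{-1} c\, w\in AM$; writing this element as $a_0 m_0$, the relation $c\simep a_T m$ together with the $O(\e)$-smallness of $w$ forces $a_0\in a_T A_{O(\e)}$ and, using that $M$ is compact and commutes with $A$, $m_0\in m M_{O(\e)}$. Shrinking $\e_0$ and enlarging $T_0$ keeps all implied constants uniform in $g_0$ and $m$, and—after matching the two product orderings in the definition of $\mathcal B$—places $w$ inside $\mathcal B(2\e)$, so that $g=g_0 w$ is the desired element.

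The main obstacle, in my view, is not the contraction estimate itself but the bookkeeping: one must verify that the conjugating element $w$ produced by the two coupled fixed-point equations genuinely lands in the product box $\mathcal B(2\e)$ and matches both orderings used to define it, and that the holonomy factor is tracked correctly so that $m_0$ stays $O(\e)$-close to the prescribed $m$ rather than drifting by an $M$-rotation accumulated along the flow. Making the $O(\e)$ constants depend only on $G$, uniformly over $g_0\in G$ and $m\in M$, is precisely where the care is required.
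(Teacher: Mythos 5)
The paper offers no proof of this lemma at all---it is quoted verbatim from \cite[Lemma 3.1]{MMO}---and your sketch is essentially the argument of that reference: unwind the intersection to write $\gamma=g_0\,c\,g_0^{-1}$ with $c=b_1a_Tm\,b_2^{-1}$ a perturbation of $a_Tm$, then straighten $c$ into $AM$ by a Banach fixed-point argument in horospherical coordinates, with $\Ad(a_{\pm T})$ supplying the $e^{-cT}$ contractions and the two orderings in the definition of $\mathcal B(\e)$ doing exactly the bookkeeping you identify as the main point. The one step to state more carefully in a full write-up is your claim ``$c\simep a_Tm$'': since moving $b_2^{-1}$ across $a_Tm$ expands one horospherical factor by $e^{T}$, $c$ is close to $a_Tm$ only in the two-sided product sense $c\in G_{O(\e)}\,a_Tm\,N^{-}_{O(\e)}$ (obtained by writing $b_2$ in the second ordering $N_\e^-N^+AM$ and conjugating only the $N^+AM$-part across, which is precisely why $\mathcal B(\e)$ is defined with both orderings), not in a fixed left-invariant metric---but your contraction scheme only needs the two-sided version, so the proof goes through.
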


\subsection{Generalized length spectrum}
In the rest of this section, let $\Gamma$ be a normal subgroup of a Zariski dense convex cocompact subgroup $\Gamma_0$
 of $G$.
For $\gamma \in \Gamma_0$, we have 
$\gamma= g a m g^{-1}$ for some $g\in G$, $am\in AM$.
The element $g$ is determined in $G/M$, and $m$ is determined only up to conjugation in $M$.
Fix a section $\mathcal L=N^+AN^-$ for $G/M$.
Let 
$$\Gamma_0^\star=\{\gamma\in \Gamma_0: \gamma= g am g^{-1} \text{ for some $g\in \mathcal L$}\}.$$
For $\gamma\in \Gamma_0^\star$,
there are unique $g\in \mathcal L, l(\gamma)\in \mathbb{R}_{>0}$ and $m_\gamma \in M$ such that  
$$\gamma= g a_{l(\gamma)} m_\gamma g^{-1}.$$

For $\gamma \in \Gamma_0$, we write $f(\gamma) \in \Gamma\ba \Gamma_0$
for its image under the projection map $\Gamma_0\to \Gamma\ba \Gamma_0$.
Hence we can write $$\Gamma  f(\gamma) g=\Gamma g a_{l(\gamma)} m_\gamma.$$

\begin{Def} The generalized length spectrum $\mathcal {GL} (\Gamma_0, \Gamma)$ of  $\Gamma_0$ relative to
$\Gamma$ is defined as
$$\mathcal {GL} (\Gamma_0, \Gamma):= \{(f(\gamma), a_{l(\gamma)}, m_\gamma)\in \Gamma\ba \Gamma_0\times A\times M:\gamma\in \Gamma_0^\star\}.$$
\end{Def}

\begin{prop}\label{dens} Suppose that $\Gamma $ is a normal subgroup of $\Gamma_0$
and that $(\Gamma\ba G, a_t)$ satisfies the topological mixing property:
for any two open subsets $U, V\subset \Gamma\ba G$,
$$Ua_t\cap V\ne\emptyset$$ for all sufficiently large $t>1$.
Then the subgroup  generated by $\mathcal {GL} (\Gamma_0, \Gamma)$ is dense in $\Gamma\ba
\Gamma_0 \times A\times M$.
\end{prop}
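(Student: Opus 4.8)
The plan is to show that the closure $\overline{H}$ of the subgroup $H$ generated by $\mathcal{GL}(\Gamma_0,\Gamma)$ is all of $\Gamma\ba\Gamma_0\times A\times M\cong\z^d\times\br\times M$. Since $\overline{H}$ is a closed subgroup, it will suffice to establish two facts: (A) $\{0\}\times A\times M\subseteq\overline{H}$, and (B) the projection of $\overline{H}$ onto the discrete factor $\Gamma\ba\Gamma_0$ is surjective. Granting these, for any target $(\xi,a_s,m)$ I can choose $h\in\overline{H}$ with $f$-coordinate $\xi$ and write $(\xi,a_s,m)=h\cdot h'$ with $h'\in\{0\}\times A\times M\subseteq\overline{H}$, so that $\overline{H}$ is everything. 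The engine for both (A) and (B) is a single production of length--holonomy--Frobenius data out of the topological mixing hypothesis together with the closing lemma.

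\textbf{Core construction.} Fix $g_0\in\mathcal{L}=N^+AN^-$ and shrink $\e_0$ so that $g_0\mathcal{B}(2\e)$ lies in the big cell $N^+AN^-M$ for all $\e<\e_0$; put $x_0=\Gamma g_0$. Given $\xi_0\in\Gamma\ba\Gamma_0$ and $m_0\in M$, I would take open sets $U\ni x_0m_0$ and $V\ni D_{\xi_0}(x_0)$ with $U\subseteq x_0\mathcal{B}(\e)m_0$ and $V\subseteq D_{\xi_0}(x_0)\mathcal{B}(\e)$ (these exist since $e\in\mathcal{B}(\e)$). By topological mixing, $Ua_T\cap V\neq\emptyset$ for all large $T$; since $A$ and $M$ commute, unwinding one such intersection point in $G$ produces an element $\gamma\in\Gamma_0$ with $f(\gamma)=\xi_0$ and $g_0\mathcal{B}(\e)a_Tm_0\cap\gamma g_0\mathcal{B}(\e)\neq\emptyset$. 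Applying the closing lemma (Lemma \ref{closing}) with this $\gamma$ and with $m=m_0$ yields $g\in g_0\mathcal{B}(2\e)\subseteq N^+AN^-M$ and $\gamma=ga_0m_{00}g^{-1}$ with $a_0\in a_TA_{O(\e)}$ and $m_{00}\in m_0M_{O(\e)}$. Writing $g=g'm'$ with $g'\in\mathcal{L}$ and $m'\in M_{O(\e)}$ gives $\gamma=g'a_0(m'm_{00}m'^{-1})g'^{-1}$, which exhibits $\gamma\in\Gamma_0^\star$ with $f(\gamma)=\xi_0$, $l(\gamma)=T+O(\e)$ and $m_\gamma=m'm_{00}m'^{-1}\in m_0M_{O(\e)}$. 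Hence $\mathcal{GL}(\Gamma_0,\Gamma)$ contains an element $(\xi_0,a_{T+O(\e)},m_1)$ with $m_1\in m_0M_{O(\e)}$, for every $\xi_0$, every $m_0$, every large $T$ and every $\e<\e_0$.

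\textbf{Bootstrapping.} Taking $m_0=e$ already realizes every class $\xi_0$, giving (B). For (A), first take $\xi_0=0$ and $m_0=e$: for each large $T$ one gets an element $(0,a_{T+O(\e)},m_1)\in H$ with $m_1\in M_{O(\e)}$. Multiplying one such element of length near $T_1$ by the inverse of another of length near $T_2$ produces $(0,a_{(T_1-T_2)+O(\e)},m'')\in\overline{H}$ with $m''\in M_{O(\e)}$; as $T_1-T_2$ ranges over $\br$ and $\e\to0$, closedness yields $\{0\}\times A\times\{a_0\}$-- more precisely $\{0\}\times A\times\{e\}\subseteq\overline{H}$. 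Next, for arbitrary $m_0\in M$, multiply an element $(0,a_{L},m_1)\in H$ (with $L=T+O(\e)$ and $m_1\in m_0M_{O(\e)}$) by $(0,a_{-L},e)\in\overline{H}$ to get $(0,a_0,m_1)\in\overline{H}$ with $m_1\in m_0M_{O(\e)}$; letting $\e\to0$ gives $(0,a_0,m_0)\in\overline{H}$. Thus $\{0\}\times\{a_0\}\times M\subseteq\overline{H}$, and combined with the previous step, $\{0\}\times A\times M\subseteq\overline{H}$, which is (A). Together with (B) this gives $\overline{H}=\Gamma\ba\Gamma_0\times A\times M$.

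\textbf{Main obstacle.} The one genuinely delicate step is the core construction. One must realize the prescribed Frobenius class $\xi_0$ --- a displacement by the deck transformation $D_{\xi_0}$ on the cover $\Gamma\ba G$ itself, rather than on $\Gamma_0\ba G$ --- simultaneously with the $M$-fiber displacement $m_0$, from a single application of mixing, and then feed the resulting near-return into the closing lemma while keeping the conjugating element in the big cell $N^+AN^-M$ so that the output $\gamma$ genuinely lies in $\Gamma_0^\star$ with $(f(\gamma),a_{l(\gamma)},m_\gamma)$ equal, up to $O(\e)$, to the prescribed data. Everything after that is elementary group theory in $\z^d\times\br\times M$, using the freedom --- furnished by mixing --- to take $T$ arbitrarily large and hence to make the length differences $T_1-T_2$ fill out $\br$.
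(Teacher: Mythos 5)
Your proposal is correct and follows essentially the same route as the paper: topological mixing applied to translates of the boxes $\mathcal B(\e)$ feeds the closing lemma (Lemma \ref{closing}), producing for every target $(\xi, a_T, m)$ with $T$ large an element $\gamma\in\Gamma_0^\star$ whose data $(f(\gamma), a_{l(\gamma)}, m_\gamma)$ lies within $O(\e)$ of it, after which density of the generated subgroup follows from the group structure. Your additional care --- rewriting the conjugator as $g=g'm'$ with $g'\in\mathcal L$ so that $\gamma$ genuinely lies in $\Gamma_0^\star$, and the explicit (A)/(B) bootstrapping in $\z^d\times\br\times M$ --- simply makes precise steps the paper's proof leaves implicit.
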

\begin{proof}
Consider $(\xi, a_T, m)\in \Gamma\ba
\Gamma_0\times A\times M$.
The assumption on the topological mixing property implies that for any small $\e>0$, there is $T_\e>0$ such
that  $$\Gamma   \mathcal B(\e) m a_T  \cap \Gamma  \xi \mathcal B(\e) \ne \emptyset$$
for all $T>T_\e$. That is, for all $T>T_\e$, there exist $g_1, g_2\in  \mathcal B(\e) $ such that
$  g_1 m a_T= \xi \gamma g_2$ for some $\gamma \in \Gamma$.
Set $\gamma_0:=\xi \gamma\in \Gamma_0$; so $f(\gamma_0)=\xi$.
The closing lemma \ref{closing} implies that there exists $g\in \mathcal B(2\e)$ such
that $\gamma_0 = g a_{T_0}m_{\gamma_0} g^{-1}$
with $a_{T_0-T}\in A_{O(\e)}$ and $m_{\gamma_0}=m M_{O(\e)}$.

Hence for any $\e>0$ and for any $(\xi, a_T, m)\in \Gamma\ba \Gamma_0\times A\times M$ for $T$ sufficiently large,
we can find $\gamma\in \Gamma_0^\star$ such that $(f(\gamma), a_{l(\gamma)}, m_\gamma)$
is within an $O(\e)$-neighborhood of $(\xi, a_T, m)$. This proves the claim.
\end{proof}

We deduce the following from Proposition \ref{dens} and Theorem \ref{mix2}: 
\begin{cor}\label{cowbell} Suppose that $\Gamma $ is a co-abelian subgroup of $\Gamma_0$. Then
the group  generated by $\mathcal {GL} (\Gamma_0, \Gamma)$ is dense in $\Gamma\ba
\Gamma_0 \times A\times M$.

\end{cor}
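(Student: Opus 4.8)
The plan is to verify the topological mixing hypothesis of Proposition \ref{dens} and then quote that proposition. Since $\Gamma$ is co-abelian in the finitely generated group $\Gamma_0$, the quotient $\Gamma\ba\Gamma_0$ is finitely generated abelian, so after replacing $\Gamma_0$ by a finite-index subgroup---again Zariski dense and convex cocompact---we may assume $\Gamma\ba\Gamma_0\simeq\z^d$ and invoke Theorem \ref{mix2}. This replacement alters neither the non-wandering set $\Omega\subset\Gamma\ba G$, nor the limit set $\Lambda(\Gamma)=\Lambda(\Gamma_0)$, nor the measures $\mathsf{m}^{\BMS},\mathsf{m}^{\BR_\pm}$ on $\Gamma\ba G$, all of which depend only on $\Gamma$; and Proposition \ref{dens} itself will be quoted for the original normal pair $(\Gamma_0,\Gamma)$.

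First I would show that $a_t$ is topologically mixing on $\Omega$: for open sets $U,V$ meeting $\Omega$ one has $Ua_t\cap V\ne\emptyset$ for all large $t$. The key fact is $\Omega\subset\supp(\mathsf{m}^{\BR_+})\cap\supp(\mathsf{m}^{\BR_-})$; indeed, in the Hopf parametrization $\Omega=\{u:u^+,u^-\in\Lambda(\Gamma)\}$, whereas $\supp(\mathsf{m}^{\BR_+})=\{u:u^-\in\Lambda(\Gamma)\}$ and $\supp(\mathsf{m}^{\BR_-})=\{u:u^+\in\Lambda(\Gamma)\}$. Given $U,V$, fix $p\in U\cap\Omega$, $q\in V\cap\Omega$ and pick nonnegative $\psi_2,\psi_1\in C_c(\Gamma\ba G)$ supported in $U,V$ with $\psi_2(p)>0$, $\psi_1(q)>0$; then $\mathsf{m}^{\BR_-}(\psi_2)>0$ and $\mathsf{m}^{\BR_+}(\psi_1)>0$. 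By Theorem \ref{mix2}, $t^{d/2}e^{(D-\delta)t}\int_{\Gamma\ba G}\psi_1(xa_t)\psi_2(x)\,dx$ converges to a strictly positive multiple of $\mathsf{m}^{\BR_+}(\psi_1)\,\mathsf{m}^{\BR_-}(\psi_2)$, so the integral is positive for all large $t$. Positivity produces an $x$ with $\psi_2(x)>0$ and $\psi_1(xa_t)>0$, i.e.\ $x\in U$ and $xa_t\in V$, giving $Ua_t\cap V\ne\emptyset$.

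This is precisely the input needed in the proof of Proposition \ref{dens}, provided the basepoint there is chosen in $\Omega$ instead of at $e$. Since $\Lambda(\Gamma)=\Lambda(\Gamma_0)$, the set $\Omega$ is invariant under right translation by $M$ and under the deck transformations $\xi\in\Gamma\ba\Gamma_0$, so for any $g_0$ with $\Gamma g_0\in\Omega$ the open sets $\Gamma g_0\mathcal B(\e)m$ and $\Gamma\xi g_0\mathcal B(\e)$ both meet $\Omega$; the mixing just proved then gives $\Gamma g_0\mathcal B(\e)m\,a_T\cap\Gamma\xi g_0\mathcal B(\e)\ne\emptyset$ for all large $T$, and the closing lemma \ref{closing} converts this near-return into an element $\gamma\in\Gamma_0^\star$ with $f(\gamma)=\xi$, $a_{l(\gamma)}\in a_TA_{O(\e)}$ and $m_\gamma\in mM_{O(\e)}$. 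Hence every triple $(\xi,a_T,m)$ with $T$ large lies within $O(\e)$ of $\mathcal{GL}(\Gamma_0,\Gamma)$, and Proposition \ref{dens} yields the density of the generated group in $\Gamma\ba\Gamma_0\times A\times M$. The main obstacle is conceptual rather than computational: in infinite volume $a_t$ is \emph{not} topologically mixing on all of $\Gamma\ba G$ (wandering frames leave every compact set), so the argument must be confined to $\Omega$, and the role of Theorem \ref{mix2} is exactly to supply positivity there via the containment $\Omega\subset\supp(\mathsf{m}^{\BR_\pm})$; everything else is bookkeeping guaranteed by the $\Gamma_0$-invariance of the limit set.
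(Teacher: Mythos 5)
Your proposal is correct and takes essentially the same route as the paper, which simply deduces the corollary from Proposition \ref{dens} and Theorem \ref{mix2}: the local mixing theorem yields positivity of correlations for functions supported near points of $\Omega$, hence the topological mixing input to the closing-lemma argument of Proposition \ref{dens}. The extra care you supply---confining mixing to open sets meeting $\Omega$ via $\Omega\subset\supp(\mathsf{m}^{\BR_+})\cap\supp(\mathsf{m}^{\BR_-})$, and reducing the general co-abelian case to a $\z^d$-quotient by passing to a finite-index subgroup---is precisely the bookkeeping the paper leaves implicit (the latter point being noted in the introductory remark on co-abelian subgroups).
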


\subsection{The $N^-$-ergodicity of generalized BMS measures}
  Let $\tilde F$ be a $\G_0$-invariant H\"{o}lder continuous function on $\T^1(\tilde X)$
  and $\chi:\Gamma_0\to \br$ be a character. Suppose $\delta_{\Gamma_0, \tilde F, \chi}<\infty$.
Let  $\tilde{\mathsf m}_{\Gamma_0}$ be a  generalized BMS-measure on $G$  associated to $(\Gamma,\tilde F,\chi)$.
 By Corollary \ref{twist}, the induced measure $\mathsf m_{\Gamma_0}$  on $\Gamma_0\backslash G$ is $A$-ergodic.
We denote by $$\mathsf m=\mathsf{m}_\Gamma$$ the $AM$-invariant measure on $\Gamma\ba G$ induced by $\tilde{\mathsf m}_{\Gamma_0}$.

Note that any essentially $N^-$-invariant measurable function
$\psi$ in $(\Gamma \ba G, \mathsf m)$ is almost everywhere
equal to a measurable $N^-$-invariant function \cite{Zi}.
 
 Define $H=H(\mathsf m)$ to be the set
of $ (\xi, a, m)\in \Gamma\ba \Gamma_0 \times A\times M$ such that
for all $N^-$-invariant measurable function $\psi$ in $L^{\infty}(\Gamma\ba G, \mathsf m)$, 
$$\psi(x )=\psi(\xi^{-1} x am)$$ for $\mathsf m$-almost all $x\in \Gamma\ba G$.

It is easy to check that $H$ is a closed subgroup.

\begin{prop} \label{mainpp} 
If $\Gamma<\Gamma_0$ is co-abelian, then
$$H(\mathsf m) =\Gamma\ba \Gamma_0 \times A\times M.$$
\end{prop}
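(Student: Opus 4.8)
The plan is to use that $H=H(\mathsf m)$ is a closed subgroup of $\Gamma\ba\Gamma_0\times A\times M$, so that it suffices to exhibit a dense subgroup contained in $H$. By Corollary \ref{cowbell} the group generated by the generalized length spectrum $\mathcal{GL}(\Gamma_0,\Gamma)$ is dense in $\Gamma\ba\Gamma_0\times A\times M$, and since $H$ is a subgroup it is enough to prove the single containment $\mathcal{GL}(\Gamma_0,\Gamma)\subset H$. Thus everything reduces to showing, for each $\gamma\in\Gamma_0^\star$, that $(f(\gamma),a_{l(\gamma)},m_\gamma)\in H$; equivalently, that every $N^-$-invariant $\psi\in L^\infty(\Gamma\ba G,\mathsf m)$ satisfies $\psi(x)=\psi\big(f(\gamma)^{-1}x\,a_{l(\gamma)}m_\gamma\big)$ for $\mathsf m$-a.e. $x$.

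First I would record the local dynamical picture attached to a single $\gamma\in\Gamma_0^\star$. Write $b=a_{l(\gamma)}m_\gamma$ and let $T=T_\gamma\colon x\mapsto f(\gamma)^{-1}xb$; since $\tilde{\mathsf m}_{\Gamma_0}$ is both $\Gamma_0$-invariant and $AM$-invariant, $T$ preserves $\mathsf m$. Choosing $g\in\mathcal L$ on the axis of $\gamma$ so that $\gamma=g\,a_{l(\gamma)}m_\gamma\,g^{-1}$, the relation $\gamma g=gb$ gives $\gamma^{-1}g=gb^{-1}$, whence the periodic point $w:=\Gamma g$ is fixed by $T$ and, for $h$ near $e$,
\[
T(wh)=\Gamma\,\gamma^{-1}g\,h\,b=w\,(b^{-1}hb)=w\,\Ad(b^{-1})(h).
\]
Hence near $w$ the map $T$ is conjugation by $b^{-1}$: in product coordinates $h=n^+\,a_r m'\,n^-\in N^+AMN^-$ it expands the $N^+$-coordinate, contracts the $N^-$-coordinate, fixes the $A$-coordinate, and conjugates the $M$-coordinate by $m_\gamma$. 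In particular $T$ carries $N^-$-leaves to $N^-$-leaves, so $\psi\circ T$ is again $N^-$-invariant; moreover, writing $\psi$ locally as a function $\Psi(n^+,r,m')$ of the $N^+AM$-coordinates (dropping the trailing $N^-$-factor by $N^-$-invariance), the desired identity becomes the invariance of $\Psi$ under $(n^+,r,m')\mapsto(b^{-1}n^+b,\,r,\,m_\gamma^{-1}m'm_\gamma)$.

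The heart of the matter, and the step I expect to be the main obstacle, is to upgrade the $N^-$-invariance of a \emph{measurable} $\psi$ to the a.e. identity $\psi\circ T=\psi$. For continuous $\psi$ this is immediate, since iterating $T^{-1}$ contracts the transverse $N^+$-coordinate to $e$; for measurable $\psi$ I would run a Hopf/shadowing argument. After replacing $\psi$ by a continuous function off a set of small measure via Lusin's theorem, one uses the ergodicity and conservativity of the geodesic flow on the \emph{convex cocompact base} $\Gamma_0\ba G$ (Corollary \ref{twist}(1)) to guarantee that, for $\mathsf m$-a.e. $x$, the projected geodesic in $\Omega_0$ repeatedly shadows the closed geodesic determined by $\gamma$. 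Each such near-return is converted, via the closing lemma \ref{closing}, into the comparison $x\,a_{l(\gamma)}m_\gamma\approx f(\gamma)\,x$ up to a strong-stable ($N^-$) displacement and an $M$-error of size $O(\e)$; since $\psi$ is $N^-$-invariant and a.e.\ continuous, passing to the limit along the shadowing times yields $\psi(x)=\psi(f(\gamma)^{-1}x\,a_{l(\gamma)}m_\gamma)$. This is exactly the mechanism behind the transitivity-group arguments of Winter \cite{Win} (compare the proof of Theorem \ref{corr}) and of Babillot \cite{Ba,MMO}, and its main technical difficulty is the careful control of the $N^-$- and holonomy-errors so that $N^-$-invariance can absorb them. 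Note that only conservativity of the flow \emph{on the base} is invoked, which holds for every convex cocompact $\Gamma_0$; this is why the conclusion is insensitive to $d$, even though the geodesic flow on $\Gamma\ba G$ itself is dissipative once $d\ge 3$. Once $\psi\circ T_\gamma=\psi$ is established for all $\gamma\in\Gamma_0^\star$, the reduction of the first paragraph completes the proof.
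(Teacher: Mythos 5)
Your proposal is correct and follows essentially the same route as the paper's proof: reduce via Corollary \ref{cowbell} and the closedness of $H(\mathsf m)$ to the single containment $\mathcal{GL}(\Gamma_0,\Gamma)\subset H$, then use the $A$-ergodicity of $\mathsf m_{\Gamma_0}$ on the convex cocompact base to produce returns near the axis of $\gamma$, close up in $N^-AMN^+$-coordinates, absorb the $N^-$-error by invariance, let the flow contract the $N^+$-error, and control the $(AM)_{O(\e)}$-error after smoothing along $AM$ and applying Luzin. The paper implements your ``shadowing'' step as a contradiction argument with a compact Luzin set $W$, the maps $T_k$ satisfying a quasi-invariance bound $\mathsf m(T_k^{-1}(Q))\le \kappa\,\mathsf m(Q)$, and a Birkhoff-theorem estimate guaranteeing $\limsup_k \mathsf m(W\cap \mathfrak S_k)>0$ --- precisely the error control you correctly identify as the main technical difficulty.
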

\begin{proof}{ By Corollary \ref{cowbell},
it suffices to show that
$$\mathcal {GL} (\Gamma_0,\Gamma)\subset H .$$
Fix $(f(\gamma), a_{l(\gamma)}, m_\gamma)\in \mathcal{GL}(\Gamma_0,\Gamma)$, that is,
for some unique $g\in \mathcal L$, $$f(\gamma) \Gamma g = \Gamma g a_{l(\gamma)} m_\gamma.$$

Let $\pi:\Gamma\ba G\to \Gamma_0\ba G$ be the canonical projection.
Let $[g]=\Gamma g$ and  let $B(\pi [g], \e)$ denote the $\e$-ball around $\pi [g]$.
Let $p>0$ be such that for any $x\in B(\pi [g], \e/p)$ and any $0\leq t< 2\eg$, we have
$$d(xa_t,\pi[g]a_t)<\epsilon/2.$$
Fix $\e>0$, and set $\mathfrak S_k=\mathfrak S_k(\e) \subset \Gamma\ba G$ be the set of $x$'s such that
$\pi(x)a_t\in B(\pi [g], \e/p)$ 
for some $t\in [k\eg , (k+1) \eg)$.

Since $\mathsf m_0 $ is $A$-ergodic, 
$$\mathsf m( \Gamma\ba G- \cup_k \mathfrak S_k)=0 .$$
 Let $x\in \mathfrak S_k$. By replacing $g$ by $ga_t$ for some $0\leq t<\eg$, we have
$$d(xa_{k\eg}, \xi [g]) \le \e/2\text{ and }d(xa_{k\eg+\eg }, \xi [g]a_{\eg} ) \le \e/2 .$$

Since $ \xi [g]a_{\eg}= f(\gamma) \xi [g] m_\gamma^{-1}$ and the metric $d$ is right $M$-invariant and left $G$-invariant,
$$d(xa_{k\eg+\eg }, \xi [g]a_{\eg} )= d( f(\gamma)^{-1} xa_{k\eg+\eg }m_\gamma ,\xi [g] ) .$$
Hence
$$d( f(\gamma)^{-1}x a_{(k+1)\eg} m_\gamma, x a_{k\eg}) \le \e .$$

Since the product map $N^-\times A\times M\times N^+\to G$ is a diffeomorphism onto
an open neighborhood of $e$ in $G$,
there exist an element  $n^- am n^+\in N^-_{O(\e)} A_{O(\e )}M_{O(\e)} N_{O(\e)}^+$ such that
 $$ x a_{k\eg} = x f(\gamma)^{-1} a_{(k+1)\eg} m_\gamma n^- am n^+ .$$

Set $$ x^*:= x (a_{(k+1)\eg} m_\gamma)  n^- (a_{(k+1)\eg} m_\gamma)^{-1} \in xN^-$$
and $$T_k(x):= f(\gamma)^{-1} x^* a_{\eg}m_\gamma am 
.$$
Note
$$x=T_k(x) a_{k\eg} n^+ a_{k \eg}^{-1}\in T_k(x)N^+_{O(e^{-k}\e)}.$$

Now suppose that
$(f(\gamma), a_\gamma, m_\gamma)\notin H$, that is, there exists an $N^-$-invariant measurable function
$\psi\in L^{\infty}(\mathsf m)$, a compact set $W$ with $\mathsf m(W)>0$, and $\e_0>0$ such that
$\psi(xh)=\psi(x)$ for all $x\in W$ and $h\in N^-$ and
$$\psi(x)>\psi(f(\gamma)^{-1} x a_\gamma m_\gamma)+\e_0$$
for all $x\in W$.
Since $x^*\in xN^-$, we have for all $x\in W$,
$$ \psi(x)> \psi(f(\gamma)^{-1} x^* a_\gamma m_\gamma) +\e_0 .$$
If we consider a sequence of non-negative continuous functions
$\eta_{\delta } $ with integral one and which is supported
in $(AM)_\delta$, then as $\delta\to 0$,
 the convolution $\psi*\eta_\delta$ converges to $\psi$ almost everywhere.
 By replacing $\psi$ with $\psi*\eta_\delta$ for small $\delta$, 
we may assume that $\psi$ is continuous for the $AM$-action. Moreover, using Luzin's theorem,
we may assume that $\psi$ is uniformly continuous on $W$ by replacing $W$ by a smaller subset if necessary.

Since $$T_k(x) = f(\gamma)^{-1} x^* a_{\eg} m_\gamma am \to x $$ as $k\to \infty$ and $am\in (AM)_{O(\e)}$.
we will get a contradiction
if we can find $x\in W$ with $ T_{k}(x)\in W$ for an arbitrarily large $k$.

Hence it remains to prove the following claim: 
\begin{enumerate}
\item $\lim_k \mathsf m(\{x\in W\cap \mathfrak S_k: T_k(x)\notin W\})= 0$;
\item $\limsup_k \mathsf m (W\cap \mathfrak S_k) >0$.
\end{enumerate}

Note that $T_k$ maps $x$ to $f(\gamma)^{-1} x a_{(k+1)\eg}m_\gamma   a_{-k\eg} am$.
Since $\mathsf m$ is $AM$-invariant,  $\Gamma\ba \Gamma_0$-invariant, and $n^-\in G_\e$,
there exists $\kappa=\kappa (\e)>1$ such that $\mathsf m (T_k^{-1} (Q))\le \kappa \mathsf m (Q)$ for any Borel set $Q
\subset \Gamma\ba G$ and any $k\ge 1$

Since for $x\in \mathfrak S_k$, $d(x, T_k(x))\to 0$, we have
$$\mathsf m(\{x\in W\cap \mathfrak S_k: T_k(x)\notin W\})\le\mathsf m (T_k^{-1} ( WG_\eta -W))\le \kappa \cdot
\mathsf m ( WG_\eta -W)$$
for small small $\eta>0$ which goes to $0$ as $k\to \infty$.

Now as $W$ is a compact subset, we have $\mathsf m(WG_\eta -W)\to 0$ as $\eta\to 0$, and hence
the first claim follows.

For the second claim, suppose the claim fails. Note that for any $t>0$, there exists $k\in\mathbb{N}$ such that $\pi^{-1}(B(\pi[g],\epsilon))a_{-t}\subset \mathfrak S_k$.
Hence we have $$\limsup_t\mathsf m(W\cap \pi^{-1}( B(\pi [g], \e) a_{-t}) = 0 .$$ 
This would mean that
$$\limsup_s\frac{1}{s} \int_0^s \int_W 1_{B(\pi [g],\e)} (\pi(x) a_t) d\mathsf m dt = 0.$$
However, by the Fubini theorem, the Birkhoff ergodic theorem (as $\mathsf m_0$ is $A$-ergodic) and the Lebesgue dominated covergence theorem, 
we deduce
\begin{align*} \frac{1}{s} \int_0^s \int_W 1_{B(\pi [g],\e)} (\pi(x) a_t) d\mathsf m dt
=&\int_W\frac{1}{s}\int_0^s  1_{B(\pi [g],\e)} (\pi(x) a_t) dt d\mathsf m
\\ \to & \mathsf m_0 (B(\pi [g],\e)) \mathsf m(W) .\end{align*}
Therefore this proves the claim.}
\end{proof}

 \begin{thm}\label{ner} If $\Gamma\ba \Gamma_0$ is abelian,
 then the $N^-$-action on $(\Gamma\ba G, \mathsf m)$ is ergodic, i.e. any $N^-$-invariant measurable function on
 $(\Gamma\ba G,\mathsf m)$ is a constant.
\end{thm}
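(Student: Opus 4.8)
The plan is to obtain Theorem \ref{ner} as a direct consequence of Proposition \ref{mainpp} together with the $A$-ergodicity of $\mathsf m_{\Gamma_0}$ on $\Gamma_0\ba G$ recorded in Corollary \ref{twist}(1). Since $\Gamma\ba\Gamma_0$ is abelian, $\Gamma$ is co-abelian in $\Gamma_0$, so Proposition \ref{mainpp} applies and yields $H(\mathsf m)=\Gamma\ba\Gamma_0\times A\times M$. First I would reduce to bounded functions: it suffices to show that every $N^-$-invariant $\psi\in L^\infty(\Gamma\ba G,\mathsf m)$ is a.e. constant (equivalently, that every $N^-$-invariant measurable set is null or conull, which handles general measurable functions via their superlevel sets). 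By the remark preceding Proposition \ref{mainpp} (citing \cite{Zi}), we may take $\psi$ to be genuinely $N^-$-invariant, so the defining property of $H(\mathsf m)$ applies to it.

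Next I would unpack the equality $H(\mathsf m)=\Gamma\ba\Gamma_0\times A\times M$. Taking $(\xi,a,m)=(\xi,e,e)$ shows that $\psi(x)=\psi(\xi^{-1}x)$ for a.e. $x$, for every $\xi\in\Gamma\ba\Gamma_0$; since this group is countable (it is $\cong\z^d$), the exceptional null sets may be unioned and $\psi$ is genuinely left $\Gamma\ba\Gamma_0$-invariant. As $\psi$ is already a function on $\Gamma\ba G$, hence left $\Gamma$-invariant, it is left $\Gamma_0$-invariant and therefore descends to a measurable function $\psi_0$ on $\Gamma_0\ba G$. Taking instead $(\xi,a,m)=(e,a,e)$ shows $\psi(x)=\psi(xa)$ for a.e. $x$, for every $a\in A$; a routine Fubini argument upgrades this to $A$-invariance of $\psi_0$ on $\Gamma_0\ba G$ in the sense required for ergodicity (the subset of $A$ where $\psi_0=\psi_0\circ a$ a.e. is conull, and invariance then holds a.e.).

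Finally, since both $\mathsf m$ on $\Gamma\ba G$ and $\mathsf m_{\Gamma_0}$ on $\Gamma_0\ba G$ are induced by the same $\Gamma_0$-invariant measure $\tilde{\mathsf m}_{\Gamma_0}$, the function $\psi_0$ lies in $L^\infty(\Gamma_0\ba G,\mathsf m_{\Gamma_0})$ and is $A$-invariant. By Corollary \ref{twist}(1) the $A$-action on $(\Gamma_0\ba G,\mathsf m_{\Gamma_0})$ is ergodic, so $\psi_0$ is constant $\mathsf m_{\Gamma_0}$-a.e.; pulling back along $\Gamma\ba G\to\Gamma_0\ba G$, $\psi$ is constant $\mathsf m$-a.e., which is the asserted $N^-$-ergodicity.

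I expect the only real subtleties to be measure-theoretic bookkeeping: the passage from ``$\psi=\psi\circ a$ for each fixed $a$'' to genuine flow-invariance of $\psi_0$, and the compatibility of the measures under the covering $\Gamma\ba G\to\Gamma_0\ba G$ ensuring that constancy downstairs pulls back to constancy upstairs. Both are standard. The genuine content of the argument is entirely absorbed into Proposition \ref{mainpp} (which in turn rests on the closing lemma and the density of the generalized length spectrum, Corollary \ref{cowbell}); the present theorem is its clean corollary, the key structural observation being that a left $\Gamma_0$-invariant, right $A$-invariant function is governed by the $A$-ergodicity of the compactly supported system on $\Gamma_0\ba G$.
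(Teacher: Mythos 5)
Your proposal is correct and follows essentially the same route as the paper: Proposition \ref{mainpp} gives $H(\mathsf m)=\Gamma\ba\Gamma_0\times A\times M$, so the $N^-$-invariant function descends to an $A$-invariant function on $(\Gamma_0\ba G,\mathsf m_{\Gamma_0})$, and the $A$-ergodicity from Corollary \ref{twist}(1) forces constancy, which pulls back. You merely spell out the measure-theoretic bookkeeping (countability of $\Gamma\ba\Gamma_0$, the Fubini upgrade to genuine $A$-invariance) that the paper leaves implicit.
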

\begin{proof}
Since $H=H(\mathsf m)=\Gamma\ba \Gamma_0\times A\times M$ by Proposition \ref{mainpp},
any $N^-$-invariant measurable function $\psi$ on $(\Gamma\ba G,\mathsf m) $
gives rise an $AMN^-$-invariant function $\tilde \psi$ on $(\Gamma_0\ba G, \mathsf m_{\Gamma_0})$.

Since $\mathsf m_{\Gamma_0}$ is $A$-ergodic, $\tilde \psi$ is constant $\mathsf m_{\Gamma_0}$ almost everywhere.
It follows that $\psi$ is constant  $\mathsf m$-almost everywhere. Therefore the claim is proved.
\end{proof}

\subsection{Babillot-Ledrappier measures}
Since $\Gamma_0$ is convex cocompact, 
we have $$\delta_{\Gamma_0, \chi}<\infty $$
for any character $\chi$ of $\Gamma_0$ (see \cite[Proof of Prop. 11.8]{PPS}).

Suppose that $\Gamma$ is co-abelian in $\Gamma_0$, and
consider the space $(\Gamma\ba \Gamma_0)^*$ of characters of $\Gamma_0$ which vanish on $\Gamma$.
Fix a character $\chi\in(\Gamma\ba \Gamma_0)^*$.
There exists a unique twisted PS density $\{\mu_{\chi,x}:x\in \tilde{X}\}$ for $(\Gamma_0,0,\chi)$ supported on
$\Lambda(\Gamma_0)$ \cite[Corollary 11.13]{PPS}. 

\begin{Def}\label{bab}  Define the following measure on $\T^1(\tilde{X})$ using the Hopf parametrization:
\begin{equation*}
d\tilde{\mathsf{m}}_{\chi}(u)=e^{\delta_{\Gamma_0,\chi}\beta_{u^+}(o,u)+D\beta_{u^-}(o,u)}d\mu_{\chi,o}(u^+)d\mathsf m_{o}(u^-)ds,
\end{equation*}
where $d\mathsf m_o$ is the Lebesgue density on $\partial (\tilde{X})$. 
\end{Def} 

One can check that the measure $\tilde{\mathsf{m}}_{\chi}$ satisfies the following properties:
\begin{enumerate}
\item identifying $\T^1(\tilde{X})$ with $G/M$, $\tilde{\mathsf{m}}_{\chi}$ is $N^-$-invariant (i.e., Lebesgue measures on each $N^-$
leaf) and quasi $A$-invariant;

\item as $\chi$ vanishes on $\Gamma$, $\tilde{\mathsf{m}}_{\chi}$ is $\Gamma$-invariant;

\item for any $\gamma\in (\Gamma\ba \Gamma_0)^*$, $\gamma_*\tilde{\mathsf{m}}_{\chi}=e^{-\chi(\gamma)}\tilde{\mathsf{m}}_{\chi}$.
\end{enumerate}
Denote by $\mathsf{m}_{\chi}$ the $MN^-$-invariant measure on $\Gamma\backslash G$ induced by $\tilde{\mathsf{m}}_{\chi}$; we call
it a Babillot-Ledrappier measure. When $\chi=0$ is the trivial character, $\mathsf m_0$ coincides with the Burger-Roblin mesure $m^{\BR_-}$ up to a constant multiple.
 
 Consider the generalized BMS measure ${\mathsf{m}}_{\Gamma_0}$ on $\Gamma\ba G$ associated to the pair
 $\{\mu_{\chi, x}\}$ and $\{\mu_{-\chi, x}\}$. Then  ${\mathsf{m}}_{\Gamma_0}$ and $\mathsf m_\chi$ have the same
transverse measure $$e^{\delta_{\Gamma_0,\chi}\beta_{u^+}(o,u)}d\mu_{\chi,o}(u^+)ds dm.$$
Hence the $N^-$-ergodicity of $\mathsf m_\chi$ is equivalent to the $N^-$-ergodicity of
$\mathsf m_\Gamma$. 
 
 Using Theorem \ref{ner}, we obtain:
\begin{Thm}\label{eee} For each $\chi\in (\Gamma\ba \Gamma_0)^*$,
the measure $\mathsf m_\chi$ is $N^-$-ergodic.
\end{Thm}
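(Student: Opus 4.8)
The plan is to obtain the $N^-$-ergodicity of $\mathsf m_\chi$ as a direct consequence of Theorem \ref{ner}, by routing through the generalized BMS measure attached to the same data and using that the two measures carry identical transverse measures for the $N^-$-foliation.

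First I would single out the generalized BMS measure $\tilde{\mathsf m}_{\Gamma_0}$ on $G$ associated to the pair of twisted PS densities $\{\mu_{\chi,x}\}$ and $\{\mu_{-\chi,x}\}$ — that is, the generalized BMS measure with trivial potential $\tilde F=0$ and character $\chi$. Since $\Gamma_0$ is convex cocompact we have $\delta_{\Gamma_0,\chi}<\infty$, so this measure is well-defined, and Corollary \ref{twist}(1) guarantees that the induced measure $\mathsf m_{\Gamma_0}$ on $\Gamma_0\ba G$ is $A$-ergodic. Denoting by $\mathsf m_\Gamma$ the $AM$-invariant measure it induces on $\Gamma\ba G$, the hypotheses of Theorem \ref{ner} are satisfied because $\Gamma\ba\Gamma_0\simeq\z^d$ is abelian; Theorem \ref{ner} then yields that the $N^-$-action on $(\Gamma\ba G,\mathsf m_\Gamma)$ is ergodic.

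The second step is to transport this ergodicity to $\mathsf m_\chi$ via the common transverse measure. In the Hopf coordinates the $N^-$-orbits give the strong stable foliation, so any $N^-$-invariant measurable function is constant along leaves and hence depends only on the transverse data $(u^+,s,m)$. Both measures disintegrate over this transversal with the same transverse measure $e^{\delta_{\Gamma_0,\chi}\beta_{u^+}(o,u)}\,d\mu_{\chi,o}(u^+)\,ds\,dm$; they differ only in the leafwise weight along $N^-$ — the Lebesgue density $e^{D\beta_{u^-}(o,u)}d\mathsf m_o(u^-)$ for $\mathsf m_\chi$ versus the conformal weight $e^{\delta_{\Gamma_0,\chi}\beta_{u^-}(o,u)}d\mu_{-\chi,o}(u^-)$ for $\mathsf m_\Gamma$ (here one uses $\delta_{\Gamma_0,\chi}=\delta_{\Gamma_0,-\chi}$ from the flip relation). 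Since this leafwise reweighting is invisible to $N^-$-invariant functions, the spaces of $N^-$-invariant functions — together with the holonomy equivalence relation on the transversal — are identified for the two measures, so the $N^-$-ergodicity of $\mathsf m_\Gamma$ forces the $N^-$-ergodicity of $\mathsf m_\chi$.

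The crux of the argument, and the step demanding the most care, is this last reduction: one must check that $N^-$-ergodicity is genuinely a property of the transverse measure class and the induced holonomy alone, independent of how the measure is weighted along the $N^-$-leaves. This rests on the local product structure of both measures and on the standard identification of an $N^-$-invariant function (modulo null sets) with a holonomy-invariant function on a transversal; granting this identification, the equivalence of the two ergodicity statements — and hence the theorem — follows at once.
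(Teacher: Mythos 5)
Your proposal is correct and is essentially the paper's own proof: the paper likewise applies Theorem \ref{ner} to the generalized BMS measure associated to the pair $\{\mu_{\chi,x}\}$ and $\{\mu_{-\chi,x}\}$ (i.e.\ to $(\Gamma_0,\tilde F=0,\chi)$, with $A$-ergodicity supplied by Corollary \ref{twist}(1)), and then transfers $N^-$-ergodicity to $\mathsf m_\chi$ by observing that the two measures share the transverse measure $e^{\delta_{\Gamma_0,\chi}\beta_{u^+}(o,u)}\,d\mu_{\chi,o}(u^+)\,ds\,dm$. Your final paragraph simply spells out the holonomy-invariance mechanism behind the paper's one-line claim that $N^-$-ergodicity of $\mathsf m_\chi$ is equivalent to that of $\mathsf m_\Gamma$.
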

Moreover
when $\Gamma_0$ is cocompact, Sarig  \cite{Sa} and Ledrappier \cite[Corollary 1.4]{Le} showed that
any $N^-M$ invariant ergodic measure on $\Gamma\ba G$ is one of $\mathsf m_\chi$'s,
by showing that such a measure should be $A$-quasi-invariant and then using
the classification of Babillot on $N^-M$-invariant and $A$-quasi-invariant measures \cite{Ba1}.

\begin{thm} Suppose that $\Gamma_0<G$ is cocompact and that $\Gamma<\Gamma_0$ is co-abelian.
Then any $N^-$-invariant ergodic measure is proportional to $\mathsf m_\chi$ for some
$\chi\in (\Gamma\ba \Gamma_0)^*$.
 \end{thm}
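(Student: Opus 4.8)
The plan is to deduce this $N^-$-classification from the $N^-M$-classification of Sarig \cite{Sa} and Ledrappier \cite{Le} (available here because $\Gamma_0$ is cocompact) by averaging over the compact group $M$. Let $\mathsf m$ be a nonzero $N^-$-invariant ergodic Radon measure on $\Gamma\ba G$. Since $M$ commutes with $A$, it normalizes $N^-$: for $m\in M$ and $n\in N^-$ we have $a_t(mnm^{-1})a_t^{-1}=m(a_tna_t^{-1})m^{-1}\to e$ as $t\to-\infty$. Writing $(R_m)_*\mathsf m$ for the push-forward of $\mathsf m$ under right translation $x\mapsto xm$, this normalization shows that each $(R_m)_*\mathsf m$ is again $N^-$-invariant, and $N^-$-ergodic since right translation by $m$ intertwines the $N^-$-action with its conjugate. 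Using the probability Haar measure on $M$, I would then form
$$\bar{\mathsf m}:=\int_M (R_m)_*\mathsf m\,dm,$$
which is a nonzero $N^-M$-invariant Radon measure: local finiteness holds because for $\phi\in C_c(\Gamma\ba G)$ the sets $(\operatorname{supp}\phi)m^{-1}$ stay in a fixed compact set as $m$ ranges over $M$, and non-triviality follows by testing against a $\phi\ge 0$ with $\mathsf m(\phi)>0$.

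The first step is to verify that $\bar{\mathsf m}$ is $N^-M$-ergodic. If $E$ is an $N^-M$-invariant Borel set, then $M$-invariance of $E$ gives $(R_m)_*\mathsf m(E)=\mathsf m(Em^{-1})=\mathsf m(E)$ for every $m$, whence $\bar{\mathsf m}(E)=\mathsf m(E)$ and likewise $\bar{\mathsf m}(E^c)=\mathsf m(E^c)$; since $E$ is $N^-$-invariant, the $N^-$-ergodicity of $\mathsf m$ forces $\mathsf m(E)=0$ or $\mathsf m(E^c)=0$, and the same for $\bar{\mathsf m}$. Thus $\bar{\mathsf m}$ is $N^-M$-invariant and ergodic, so the classification of Sarig and Ledrappier yields $\chi\in(\Gamma\ba\Gamma_0)^*$ and $c>0$ with $\bar{\mathsf m}=c\,\mathsf m_\chi$.

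It remains to descend from $\bar{\mathsf m}=c\,\mathsf m_\chi$ to $\mathsf m$ itself. The key points are: (i) for almost every $m\in M$ one has $(R_m)_*\mathsf m\ll\bar{\mathsf m}$; this follows by applying $\int_M (R_m)_*\mathsf m(A)\,dm=\bar{\mathsf m}(A)=0$ to each set $A$ in a countable ring generating the Borel structure of $\Gamma\ba G$, taking a countable union of the resulting null exceptional sets, and extending to all $\bar{\mathsf m}$-null sets by inner regularity of Radon measures. (ii) Since $(R_m)_*\mathsf m$ and $\bar{\mathsf m}\propto\mathsf m_\chi$ are both $N^-$-ergodic, the Radon--Nikodym derivative $d(R_m)_*\mathsf m/d\bar{\mathsf m}$ is $N^-$-invariant, hence $\mathsf m_\chi$-almost everywhere constant by Theorem \ref{eee}; thus $(R_m)_*\mathsf m\propto\mathsf m_\chi$ for almost every $m$. (iii) The proportionality constant equals $(R_m)_*\mathsf m(\phi)/\mathsf m_\chi(\phi)$ for any fixed $\phi\in C_c(\Gamma\ba G)$ with $\mathsf m_\chi(\phi)\neq0$, which depends weak-$*$ continuously on $m$; since the co-null set of good $m$ is dense in $M$, this propagates the identity $(R_m)_*\mathsf m=\lambda(m)\mathsf m_\chi$ to every $m$, and evaluating at $m=e$ gives $\mathsf m\propto\mathsf m_\chi$.

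The main obstacle is this final descent. The two descriptions of $\bar{\mathsf m}$ --- as a single $N^-$-ergodic measure and as an $M$-average of the $N^-$-ergodic measures $(R_m)_*\mathsf m$ --- must be reconciled in the setting of infinite (Radon) measures, where one must argue with some care that mutually singular $N^-$-ergodic measures either coincide up to scale or are separated on $N^-$-invariant sets, establish the almost-everywhere absolute continuity in (i), and then upgrade proportionality from almost every $m$ to every $m$ using weak-$*$ continuity against compactly supported test functions rather than against indicators of invariant sets.
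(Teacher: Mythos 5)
Your proposal is correct and follows essentially the same route as the paper: average over $M$ to get an $N^-M$-invariant ergodic measure, apply the Sarig--Ledrappier classification, and use Theorem \ref{eee} (the $N^-$-ergodicity of $\mathsf m_\chi$) to conclude that $m_*\nu\propto \mathsf m_\chi$ for almost every $m\in M$ --- your steps (i)--(ii) in fact spell out a measure-theoretic point the paper leaves implicit. The only difference is the final descent: instead of your weak-$*$ continuity propagation to $m=e$, the paper simply picks a single good $m$ and uses the $M$-invariance of $\mathsf m_\chi$ to conclude $\nu\propto (m^{-1})_*\mathsf m_\chi=\mathsf m_\chi$ directly.
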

 \begin{proof}
 If $\nu$ is an  ergodic $N^-$-invariant measure on $\Gamma\ba G$, then
 the average $\nu^\clubsuit:=\int_M m_*\nu dm$ is an ergodic $N^-M$ invariant measure, and hence
 is proportional to $\mathsf m_\chi$ for some
$\chi\in (\Gamma\ba \Gamma_0)^*$ by the afore-mentioned results (\cite{Sa}, \cite{Le}).
 As $\mathsf m_\chi$  is $N^-$-ergodic by Theorem \ref{eee}, it follows that
  for almost all $m\in M$, $m_*\nu$ is proportional to $\mathsf m_\chi$. Pick one such $m\in M$.
 Since $\mathsf m_\chi$ is $M$-invariant, we have $\nu$ is proportional to 
 $(m^{-1})_*\mathsf m_\chi=\mathsf m_\chi$. \end{proof}

 \section{Applications to counting and equidistribution problems}
 As before, let $\Gamma$ be a normal subgroup of a Zariski dense convex cocompact subgroup $\Gamma_0$
 of $G$ with $\Gamma\ba \Gamma_0\simeq \z^d$ for some $d\ge 0$. We
 describe counting and equidistribution results for $\Gamma$ orbits which can be deduced
 from Theorems \ref{m1} and \ref{mix22}. The deduction process is well-understood (see \cite{OS}, \cite{MO}, and  \cite{MMO}).
 
 \subsection{ Equidistribution of translates.}
 Let $H$ be an expanding
  horospherical subgroup or a symmetric subgroup of $G$.
 Recall  the definition the PS measure on $gH/(H\cap M)\subset G/M=\T^1(\tilde X)$: 
 $$d\tilde \mu_{gH}^{\PS}(gh)=e^{\delta\beta_{(gh)^+} }d\nu_o((gh)^+).$$ We denote by
 $\tilde \mu_{gH}$ the $H\cap M$ invariant lift to the orbit $gH$.

 \begin{thm}\label{eq}  Let $H$ be the expanding
  horospherical subgroup or a symmetric subgroup of $G$, and let 
 $x=\Gamma g \in \Gamma\ba G$. Let  $U\subset H$ be a small open
  subset such that $u\mapsto xu$ is injective on $U$.
 Suppose that $\tilde \mu_H^{\PS}(\partial(gU))=0$.
 Then we have
 $$\lim_{t\to \infty} t^{d/2} \int_{U} \psi(\Gamma g ua_t) d\tilde \mu_H^{\PS}(gu) 
 = \frac{\tilde \mu_H^{\PS}(gU)}{(2\pi\sigma)^{d/2} } \mathsf{m}^{\BMS}(\psi)$$
and
$$\lim_{t\to \infty} t^{d/2} e^{(D-\delta)t} \int_{U} \psi(xua_t) du
 = \frac{\tilde \mu_H^{\PS}(gU)}{(2\pi\sigma)^{d/2}} \mathsf{m}^{\BR_+}(\psi).$$
 \end{thm}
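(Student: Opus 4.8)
The plan is to deduce both identities from the mixing asymptotics of Theorems \ref{m1} and \ref{mix22} by the ``thickening'' argument of Eskin--McMullen \cite{EM} and Roblin, in the geometrically infinite form developed in \cite{OS}, \cite{MO}, \cite{MMO}. By linearity we may assume $\psi\ge 0$, and I treat first the case $H=N^+$; the symmetric case is identical once $gH$ is foliated against a complementary transversal. The starting point is the local product structure of the relevant measures, read off from the Hopf parametrizations of Section 2: in coordinates $G=N^+\cdot(N^-AM)$ near a point of $\Omega$, one has, up to H\"older densities tending to $1$ as the neighborhood shrinks,
$$d\mathsf{m}^{\BMS}=d\tilde\mu^{\PS}_{gN^+}\otimes d\lambda^{\BMS}_\perp,\quad d\mathsf{m}^{\Haar}=d\Leb_{gN^+}\otimes d\lambda^{\Haar}_\perp,\quad d\mathsf{m}^{\BR_-}=d\tilde\mu^{\PS}_{gN^+}\otimes d\lambda^{\Haar}_\perp.$$
The crucial coincidence, visible from the formulas of Section 2, is that $\mathsf{m}^{\Haar}$ and $\mathsf{m}^{\BR_-}$ have the \emph{same} transverse measure $\lambda^{\Haar}_\perp$ (Lebesgue on $N^-$, times $ds$, times Haar on $M$) and differ only along the expanding leaf $gN^+$, where one is Lebesgue and the other is the Patterson--Sullivan measure.

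For the first identity, fix a small transversal ball $S_\e\subset N^-AM$ and set $\Psi_\e(guv)=F_\e(gu)\rho_\e(v)$ with $\rho_\e\ge 0$ normalized by $\int_{S_\e}\rho_\e\,d\lambda^{\BMS}_\perp=1$ and $F_\e\in C_c(gN^+)$ a leaf profile with $F_\e\ge\mathbf 1_{gU}$ and $\int F_\e\,d\tilde\mu_H^{\PS}\to\tilde\mu^{\PS}_H(gU)$ (possible since $\tilde\mu^{\PS}_H(\partial(gU))=0$); then $\mathsf{m}^{\BMS}(\Psi_\e)\to\tilde\mu^{\PS}_H(gU)$. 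The geometric input is the shadowing estimate: for $v=h\,a_sm\in S_\e$ with $h\in N^-$,
$$guva_t=gua_t\cdot\big(a_s\,(a_{-(t+s)}ha_{t+s})\,m\big),$$
and since $a_{-(t+s)}ha_{t+s}\to e$ by contraction of $N^-$, while $a_s,m$ lie in an $O(\e)$-ball, the points $guva_t$ and $gua_t$ are $O(\e)$-close, uniformly in $u\in U$, once $t$ is large. Introducing the continuous majorant $\psi^{\e,+}=\sup_{h\in G_{O(\e)}}\psi(\cdot\,h)\searrow\psi$, this yields for $t$ large
$$\int_U\psi(gua_t)\,d\tilde\mu_H^{\PS}(gu)\ \le\ \int_{\Gamma\ba G}\psi^{\e,+}(wa_t)\,\Psi_\e(w)\,d\mathsf{m}^{\BMS}(w),$$
and the reverse inequality with the minorant $\psi^{\e,-}$ and $F_\e\le\mathbf 1_{gU}$. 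Multiplying by $t^{d/2}$, passing to $t\to+\infty$ via Theorem \ref{m1}, and then $\e\to 0$ using $\mathsf{m}^{\BMS}(\psi^{\e,\pm})\to\mathsf{m}^{\BMS}(\psi)$ and $\mathsf{m}^{\BMS}(\Psi_\e)\to\tilde\mu^{\PS}_H(gU)$, the two bounds sandwich the limit to $\tfrac{\tilde\mu^{\PS}_H(gU)}{(2\pi\sigma)^{d/2}}\mathsf{m}^{\BMS}(\psi)$. This sandwiching is precisely what avoids multiplying a modulus-of-continuity error by the diverging factor $t^{d/2}$.

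The second identity follows \emph{verbatim}, replacing $d\tilde\mu_H^{\PS}$ by $\Leb$ on the leaf, $\mathsf{m}^{\BMS}$ by $\mathsf{m}^{\Haar}$, and Theorem \ref{m1} by Theorem \ref{mix22}; here the transversal bump $\Phi_\e$ is normalized with respect to $\lambda^{\Haar}_\perp$. The only new point is the evaluation of the limiting mass: because $\lambda^{\Haar}_\perp=\lambda^{\BR_-}_\perp$ while $\mathsf{m}^{\BR_-}$ restricts to the Patterson--Sullivan measure on the $N^+$-leaf, the same $\Phi_\e$ satisfies $\mathsf{m}^{\BR_-}(\Phi_\e)\to\tilde\mu^{\PS}_H(gU)$, which is exactly the factor produced by Theorem \ref{mix22}. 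One thereby obtains $\lim_{t\to\infty}t^{d/2}e^{(D-\delta)t}\int_U\psi(xua_t)\,du=\tfrac{\tilde\mu^{\PS}_H(gU)}{(2\pi\sigma)^{d/2}}\mathsf{m}^{\BR_+}(\psi)$.

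The main obstacle is not the mixing input, which is handed to us by Theorems \ref{m1} and \ref{mix22}, but the bookkeeping of the local product structure: controlling the H\"older densities of $\mathsf{m}^{\BMS},\mathsf{m}^{\Haar},\mathsf{m}^{\BR_\pm}$ in the transverse coordinates uniformly in $\e$, and---in the symmetric case---replacing $N^+$ by the unstable part of the orbit $gH$ and checking that the complementary directions still contract (or commute) under $a_t$, so that the shadowing estimate and the product decomposition of the measures along $gH$ persist. Both are carried out exactly as in \cite{OS}, \cite{MO}, \cite{MMO}, and we omit the routine verifications.
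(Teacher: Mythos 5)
Your proposal is correct and takes essentially the same route as the paper: the authors likewise invoke the wave front property of \cite{EM} to replace the leafwise integrals by matrix coefficients against a transverse $\e$-thickening $\rho_{U,\e}$ of $U$ (their display \eqref{tt}), then apply Theorems \ref{m1} and \ref{mix2}, deferring the sandwich and product-structure bookkeeping to \cite[Sec.~3]{OS} and \cite{MO}. In particular, your key observation that $\mathsf{m}^{\Haar}$ and $\mathsf{m}^{\BR_-}$ share the same transverse measure, so that the thickening normalized for $\mathsf{m}^{\Haar}$ evaluates under $\mathsf{m}^{\BR_-}$ to $\tilde\mu_H^{\PS}(gU)$, is exactly the mechanism implicit in the paper's citation of Theorem \ref{mix2}.
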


The assumption on $H$ being either symmetric or horospherical
ensures the wave front property of \cite{EM} which can be used to establish, as $t\to \infty$,
\be \label{tt} \int_{U}\psi(\Gamma g u a_t) d\tilde \mu_H^{\PS}(gu) 
\approx \la a_t \psi, \rho_{U, \e}\ra_{m^{\BMS}}
\text{ and } \int_{U}\psi(xu a_t) du
\approx \la a_t \psi, \rho_{U, \e}\ra 
 \ee
where
$\rho_{U, \e} \in C_c(\G\ba G)$ is an $\e$-approximation of $U$, and $\approx$ means that the ratio of the two terms is of size
$1+O(\e)$.
Therefore the estimates on the matrix coefficients
in Theorems \ref{m1} and \ref{mix2} can be used to establish Theorem \ref{eq}. We refer to
\cite[Sec. 3]{OS} and \cite{MO} for more details.

Suppose that $\G_0\ba\G_0 H$ is closed. This implies that $\G\ba \G H$
is closed too. Then $\tilde \mu_H^{\PS}$ induces  locally finite Borel measures  
$\mu_{\G_0, H}^{\PS}$ and $\mu_{\G, H}^{\PS}$  on $\G_0\ba \G_0 H$ and $\G\ba \G H$
respectively.

As $\Gamma_0$ is convex cocompact,  $\mu_{\G_0, H}^{\PS}$ is compactly supported \cite[Theorem 6.3]{OS}.
If $H\cap \G<H\cap \Gamma_0$ is of finite index, the measure $\mu_{\G, H}^{\PS}$ is also compactly supported.
In this case, by applying Theorem \ref{eq} to the support of $\mu_{\G, H}^{\PS}$, we get the following:
\begin{thm} \label{op} Suppose that $\G_0\ba\G_0 H$ is closed and that $[H\cap \G_0: H\cap \G]<\infty$. Then 
\be \label{cou} \lim_{t\to \infty} t^{d/2} e^{(D-\delta)t} \int_{H} \psi([e]ha_t) dh
 = \frac{|\mu_{\Gamma, H}^{\PS}|}{(2\pi\sigma)^{d/2}} \mathsf{m}^{\BR_+}(\psi) .\ee
\end{thm}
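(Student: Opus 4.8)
The plan is to deduce \eqref{cou} from the local equidistribution statement of Theorem \ref{eq} by covering the support of the orbital Patterson--Sullivan measure with finitely many small pieces and summing; the key structural input is that $\mu_{\Gamma, H}^{\PS}$ is compactly supported. To establish this, I would use the identifications $\Gamma\ba \Gamma H\simeq (H\cap \Gamma)\ba H$ and $\Gamma_0\ba \Gamma_0 H\simeq (H\cap \Gamma_0)\ba H$, under which the natural projection $\Gamma\ba \Gamma H\to \Gamma_0\ba \Gamma_0 H$ is a proper covering map of degree $[H\cap \Gamma_0:H\cap \Gamma]<\infty$. Since $\Gamma$ and $\Gamma_0$ share the same Patterson--Sullivan density $\{\nu_x\}$, both orbital measures are built from the same transverse density $\tilde\mu_H^{\PS}$ on $gH$, so $\supp(\mu_{\Gamma, H}^{\PS})$ maps onto $\supp(\mu_{\Gamma_0, H}^{\PS})$. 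The latter is compact because $\Gamma_0$ is convex cocompact \cite[Theorem 6.3]{OS}, so the former, being contained in the preimage of a compact set under a finite covering, is compact as well; in particular $|\mu_{\Gamma, H}^{\PS}|<\infty$.

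Next I would fix a relatively compact open neighborhood of $\supp(\mu_{\Gamma, H}^{\PS})$ and cover it by finitely many small open pieces $[e]U_1,\dots,[e]U_r$ with $U_i\subset H$, chosen small enough that $u\mapsto [e]u$ is injective on each $U_i$ and $\tilde \mu_H^{\PS}(\partial U_i)=0$ (possible since $\tilde\mu_H^{\PS}$ is Radon). Fixing a partition of unity $\{\phi_i\}$ subordinate to this cover, I would apply the $\phi_i$-weighted form of Theorem \ref{eq} to obtain
$$\lim_{t\to \infty} t^{d/2}e^{(D-\delta)t}\int_{U_i}\phi_i(u)\,\psi([e]ua_t)\,du=\frac{\tilde\mu_H^{\PS}(\phi_i)}{(2\pi\sigma)^{d/2}}\,\mathsf m^{\BR_+}(\psi)$$
for each $i$, where $\tilde\mu_H^{\PS}(\phi_i):=\int_{U_i}\phi_i\,d\tilde\mu_H^{\PS}$. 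Summing over $i$ and using $\sum_i\phi_i\equiv 1$ on a neighborhood of the support, together with $\sum_i\tilde\mu_H^{\PS}(\phi_i)=|\mu_{\Gamma, H}^{\PS}|$, produces exactly the claimed main term $\tfrac{|\mu_{\Gamma, H}^{\PS}|}{(2\pi\sigma)^{d/2}}\mathsf m^{\BR_+}(\psi)$.

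The main obstacle is that the orbit $\Gamma\ba \Gamma H$ is typically non-compact, since $H\cap \Gamma$ need not be cocompact in $H$, so the integral $\int_H\psi([e]ha_t)\,dh$ also runs over the part of $H$ lying outside the finite cover, on which $\tilde\mu_H^{\PS}$ vanishes but the Haar integral does not obviously vanish. I must therefore show that this tail contributes nothing to the renormalized limit. The mechanism is the approximation \eqref{tt}: for any small piece $U$ one has $\int_U\psi([e]ua_t)\,du\approx \la a_t\psi,\rho_{U,\e}\ra$, and Theorem \ref{mix22} bounds the $t^{d/2}e^{(D-\delta)t}$-renormalization of this matrix coefficient by a constant multiple of $\|\psi\|_\infty\,\mathsf m^{\BR_-}(\rho_{U,\e})$, which in turn is comparable to $\tilde\mu_H^{\PS}$ of an $\e$-thickening of $[e]U$. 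Summing this uniform majorant over pieces covering the complement of a large compact set bounds the tail by $\tilde\mu_H^{\PS}$ of a neighborhood of the complement of $\supp(\mu_{\Gamma, H}^{\PS})$, which is arbitrarily small since $\mu_{\Gamma, H}^{\PS}$ is a compactly supported Radon measure. Establishing this uniform bound and the resulting interchange of limit and summation is the technical heart; it relies on the compactness of the BMS support afforded by convex cocompactness and on the wave-front property of \cite{EM}, and the deduction then follows the scheme of \cite[Sec. 3]{OS} and \cite{MO} essentially verbatim.
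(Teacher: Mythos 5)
Your proposal is correct and takes essentially the same route as the paper: the paper likewise obtains compact support of $\mu_{\Gamma, H}^{\PS}$ from the finite-index condition $[H\cap \Gamma_0: H\cap \Gamma]<\infty$ together with the compactness of $\operatorname{supp}\mu_{\Gamma_0,H}^{\PS}$ from \cite[Theorem 6.3]{OS}, and then deduces \eqref{cou} by applying Theorem \ref{eq} on a cover of that support, deferring the remaining details (including the tail control you correctly identify as the technical heart, via \eqref{tt} and the wave-front property) to the schemes of \cite{OS}, \cite{MO}. Your write-up is in fact more explicit than the paper's citation-level argument, and the details you supply are the intended ones.
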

In the case when $H=K$ and $\G_0$ is torsion free, we have
$K\cap \G=K\cap \Gamma_0=\{e\}$, and Theorem \ref{op} provides the equidistribution
of Riemannian spheres $S_t$ with respect to $(\Gamma\ba G, \mathsf{m}^{\BR_+})$ as $t\to \infty$. 

 \subsection{Distribution of a discrete $\G$-orbit on $H\ba G$.}
Let
$H$ be either a symmetric subgroup, a horospherical subgroup or the trivial
subgroup of $G$. 
 Consider the homogeneous space $H\ba G$
and suppose $[e]\Gamma_0$ is discrete in $H\ba G$. 
Recall that $v_o\in \T^1(\tilde X)$ is a vector whose stabilizer is $M$.
  \begin{Def}\label{MD}
 \begin{enumerate}
 \item Define a Borel measure $\mathcal M=\mathcal M_{\Gamma,G}$ on $G$ as follows: for $\psi\in C_c(G)$,
 $$\mathcal M(\psi):=\frac{1}{(2\pi\sigma)^{d/2} } \int_{k_1a_tk_2\in KA^+K} \psi(k_1a_tk_2) 
 e^{\delta t } t^{d/2} d\nu_o(k_1 v_o^+) dt d\nu_o(k_2^{-1} v_o^-) .$$
 \item For $H$ symmetric or horospherical,
  we have either $G=HA^+K$ or $G=HA^+K\cup HA^-K$.  
 Define a Borel measure $\mathcal M=\mathcal M_{\Gamma, H\ba G}$ on $H\ba G$ as follows: if $G=HA^+K$
 and $\psi\in C_c(H\ba G)$,
 $$\mathcal M(\psi):=\frac{ |\mu_{\Gamma, H}^{\PS}|}{(2\pi\sigma)^{d/2} } \int_{a_tk\in A^+K} e^{\delta t }\cdot t^{d/2} \psi([e]a_t k)
  dt d\nu_o(k^{-1} v_o^-).$$
  If $G=HA^+K\cup HA^-K$ and  $\psi\in C_c(H\ba G)$,
  $$\mathcal M(\psi):=\sum
  \frac{|\mu_{\Gamma,H,\pm}^{\PS}|}{(2\pi\sigma)^{d/2} } \int_{a_{\pm t}k\in A^{\pm} K} e^{\delta t}\cdot  t^{d/2} \psi([e]a_t k)
  dt d\nu_o(k^{-1} v_o^{\mp}).$$
  \end{enumerate}
 
 \end{Def}

For a compact subset $B\subset H\ba G$,
define $B_{ \e}^+=BU_\e$ and $B_{\e}^-=\cap_{u\in U_\e} Bu$ if $H\ne \{e\}$. 
For $H=\{e\}$,
 set $B_{ \e}^+= U_\e B U_\e$ and  $B_{\e}^-=\cap_{u_1, u_2 \in U_\e} u_1 Bu_2$. 
 
 \begin{Def}\label{MDW}
 Let $B_T$ be a family of compact subsets in $H\ba G$.
 We say $B_T$ is well-rounded with respect to $\mathcal M=\mathcal M_{\Gamma,H\ba G}$ if 
 \begin{enumerate}
 \item $\mathcal M(B_T)\to \infty$ as $T\to \infty$;
 \item
 $\limsup_{\e\to 0}\limsup_{T\to 0} \frac{\mathcal M({B_{T,\e}^+ -B_{T,\e}^-)}}{\mathcal M({B_T})}=0.$
 \end{enumerate}
 \end{Def}

 The following theorem  can be proved
 using the equidistribution theorem \ref{op} for $H$ symmetric or horospherical, and
  Theorem \ref{mix2} for $H$ trivial (see \cite[Sec. 6]{Oh}).
 
 \begin{thm} Suppose that $[e]\Gamma_0$ is discrete and that $[H\cap \Gamma_0:H\cap \Gamma]<\infty$.
 If $\{B_T\}$ is a  sequence of compact subsets in $H\ba G$ which are well-rounded with respect to $\mathcal M_{\Gamma,H\ba G}$,
 then as $T\to \infty$,
 $$\# [e]\Gamma \cap B_T \sim \mathcal M_{\Gamma,H\ba G}(B_T) .$$
 \end{thm}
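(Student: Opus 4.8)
The plan is to deduce the asymptotic from the equidistribution results of the previous subsection by the now-standard argument of Duke--Rudnick--Sarnak and Eskin--McMullen, as adapted to the infinite-volume setting in \cite{OS}, \cite{MO} and \cite{MMO}; the only genuinely new feature is the polynomial weight $t^{d/2}$ carried by $\mathcal M$. First I would encode the count as the value of an automorphic counting function. For a bounded Borel set $B\subset H\ba G$ set
$$F_B(\Gamma g):=\sum_{\gamma\in (H\cap\Gamma)\ba \Gamma}\mathbf 1_B([e]\gamma g),$$
which is well-defined on $\Gamma\ba G$ because $\mathbf 1_B([e]\,\cdot\,)$ is left $H$-invariant, hence left $(H\cap\Gamma)$-invariant; since $\gamma\mapsto[e]\gamma$ is a bijection from $(H\cap\Gamma)\ba\Gamma$ onto the orbit $[e]\Gamma$, we have $F_{B_T}(\Gamma e)=\#([e]\Gamma\cap B_T)$. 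Fix $\e>0$ and let $\rho_\e\in C_c(\Gamma\ba G)$ be a nonnegative approximate identity at $\Gamma e$ of total mass one whose lift is supported in a symmetric $\e$-ball $U_\e$ of $e$. For $g\in U_\e$ the inner/outer thickenings of Definition~\ref{MDW} give the monotonicity $\mathbf 1_{B_{T,\e}^-}([e]\gamma g)\le \mathbf 1_{B_T}([e]\gamma)\le \mathbf 1_{B_{T,\e}^+}([e]\gamma g)$, and summing over $\gamma$ and integrating against $\rho_\e$ yields the sandwich
$$\langle F_{B_{T,\e}^-},\rho_\e\rangle\ \le\ \#([e]\Gamma\cap B_T)\ \le\ \langle F_{B_{T,\e}^+},\rho_\e\rangle .$$

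The next step is to evaluate $\langle F_B,\rho_\e\rangle$ by unfolding the sum over $(H\cap\Gamma)\ba\Gamma$ against the integral over $\Gamma\ba G$, which gives
$$\langle F_B,\rho_\e\rangle=\int_{(H\cap\Gamma)\ba G}\mathbf 1_B([e]g)\,\tilde\rho_\e(g)\,dg .$$
I would then compute the right-hand side through the equidistribution theorems. When $H$ is symmetric or horospherical I use the polar decomposition $G=HA^+K$ (or $HA^+K\cup HA^-K$) underlying Definition~\ref{MD}: writing the invariant integral in these coordinates and applying Theorem~\ref{op}, the translates $[e]Ha_t$ equidistribute towards $\mathsf m^{\BR_+}$ with the normalization $t^{d/2}e^{(D-\delta)t}$, with total mass $|\mu_{\Gamma,H}^{\PS}|$ (finite and compactly supported since $\Gamma_0$ is convex cocompact and $[H\cap\Gamma_0:H\cap\Gamma]<\infty$). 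Integrating this equidistribution against the radial parameter $t$, the Haar weight in the polar coordinates supplies the factor $e^{\delta t}$ and assembles, together with $|\mu_{\Gamma,H}^{\PS}|/(2\pi\sigma)^{d/2}$, into exactly the integrand defining $\mathcal M$, so that $\langle F_B,\rho_\e\rangle=(1+O(\e))\,\mathcal M(B)$ for every $B$ with $\mathcal M(\partial B)=0$. When $H=\{e\}$ the same computation is carried out directly from the matrix-coefficient asymptotic of Theorem~\ref{mix2}: here both $\mathsf m^{\BR_+}$ and $\mathsf m^{\BR_-}$ appear, and under the $KA^+K$-decomposition they match the two Patterson--Sullivan factors $d\nu_o(k_1 v_o^+)$ and $d\nu_o(k_2^{-1}v_o^-)$ of Definition~\ref{MD}(1). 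In all three cases the wave-front property of \cite{EM}, already used in \eqref{tt}, is what justifies replacing the thin integral over the orbit by the smoothed matrix coefficient, and it is available precisely because $H$ is symmetric, horospherical, or trivial.

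Finally I would let $\e\to0$. Combining the sandwich with the previous step gives
$$(1-O(\e))\,\mathcal M(B_{T,\e}^-)\ \le\ \#([e]\Gamma\cap B_T)\ \le\ (1+O(\e))\,\mathcal M(B_{T,\e}^+),$$
and since $B_{T,\e}^-\subset B_T\subset B_{T,\e}^+$, the well-roundedness hypothesis Definition~\ref{MDW}(2) forces $\mathcal M(B_{T,\e}^{\pm})/\mathcal M(B_T)\to1$ as first $T\to\infty$ and then $\e\to0$, while Definition~\ref{MDW}(1) guarantees the main term grows. This yields $\#([e]\Gamma\cap B_T)\sim\mathcal M(B_T)$. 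The main obstacle, and the one point genuinely new compared with the lattice case, lies in the uniformity needed in the second step: I must justify integrating the \emph{pointwise} equidistribution of Theorem~\ref{op} over the radial direction, i.e.\ verify that the convergence $t^{d/2}e^{(D-\delta)t}\int_H\psi([e]ha_t)\,dh\to\text{const}\cdot\mathsf m^{\BR_+}(\psi)$ is sufficiently uniform (locally uniformly in the transverse $K$-direction and with controlled dependence on $\e$) that it survives integration against the growing weight $e^{\delta t}t^{d/2}$ and leaves the $\e$-errors negligible relative to $\mathcal M(B_T)$. This is exactly where the finiteness of $\mathsf m^{\BMS}$, the compact support of $\mu_{\Gamma,H}^{\PS}$, and the precise normalization built into $\mathcal M$ must all be used together.
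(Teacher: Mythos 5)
Your proposal takes essentially the same route as the paper: the paper gives no written proof of this theorem, stating only that it ``can be proved using the equidistribution theorem \ref{op} for $H$ symmetric or horospherical, and Theorem \ref{mix2} for $H$ trivial'' via the deduction of \cite{OS}, \cite{MO} and \cite[Sec.~6]{Oh}, and that standard Duke--Rudnick--Sarnak/Eskin--McMullen scheme --- counting function on $\Gamma\ba G$, sandwiching by $B_{T,\e}^{\pm}$ against an approximate identity, unfolding over $(H\cap\Gamma)\ba\Gamma$, and matching the polar-coordinate integral with $\mathcal M$ --- is exactly what you carry out. The uniformity issue you flag in the radial integration is likewise the point handled in those cited references, so your argument is correct and coincides with the paper's intended proof.
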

 This was shown in \cite{OS} and \cite{MO} for $d=0$.
 
\subsection{Distribution of circles} We also state the following result on the asymptotic distribution of a $\Gamma$-orbit of a circle
whose proof can be obtained in the same way as in \cite{Oh-Shah2} using Theorem \ref{mix2} (see also \cite{KO}, \cite{LO}, \cite{Pan}).
  \begin{thm} \label{sk2} Let $G=\op{PSL}_2(\mathbb C)$. 
  Let $C_0$ be a circle in the complex plane $\mathbb C$ such
  that $\P:=\Gamma_0 (C_0)$ is discrete in the space of circles in $\mathbb C$. Suppose
  that $\op{Stab}_\Gamma (C_0)=\op{Stab}_{\Gamma_0} (C_0)$.
 There exists $c>0$ such that for any compact subset $E\subset \mathbb C$ whose boundary is rectifiable,
 we have 
 $$\#\{C\in \P: C\cap E\ne\emptyset, \op{curv}(C)\le T\} \sim c \frac{ T^{\delta}}{(\log T)^{d/2}} \op{H}^\delta(E) $$
 where $\op{H}^\delta$ is the $\delta$-dimensional Hausdorff measure of the limit set of $\Gamma$ (with respect to the Euclidean metric).
 \end{thm}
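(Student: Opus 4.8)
The approach is to recast the circle count as an orbit count on a symmetric space and then run the counting/equidistribution machinery of Oh--Shah \cite{Oh-Shah2}, with the geodesic-flow mixing replaced throughout by the local mixing asymptotic of Theorem \ref{mix2}. Let $\hat C_0\subset\bH^3$ be the totally geodesic plane bounded by $C_0$ and let $H=\op{Stab}_G(\hat C_0)$, a symmetric subgroup of $G=\op{PSL}_2(\c)$ conjugate to $\op{PGL}_2(\br)$. The assignment $Hg\mapsto g^{-1}(C_0)$ identifies $H\ba G$ with the space of circles, and under it the orbit $\P$ becomes a discrete $\Gamma$-orbit $[e]\Gamma\subset H\ba G$; discreteness is the hypothesis that $\P$ is discrete, and the condition $\op{Stab}_\Gamma(C_0)=\op{Stab}_{\Gamma_0}(C_0)$ is exactly the equal-stabilizer (hence finite-index) assumption of Theorems \ref{op} and \ref{mcc}. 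I would then translate the two defining constraints into a region of $H\ba G$: writing $g=k_1a_tk_2$ in the Cartan decomposition $KA^+K$, the curvature of $g^{-1}(C_0)$ is comparable to $e^{t}$ with implied constants controlled by $k_1,k_2$, so that $\op{curv}\le T$ amounts to $t\le\log T+O(1)$, while $g^{-1}(C_0)\cap E\ne\emptyset$ constrains the backward endpoint $k_2^{-1}v_o^-\in\partial(\bH^3)$ to the set of directions whose circle meets $E$. This packages the count as $\#\big([e]\Gamma\cap B_T(E)\big)$ for an explicit family $B_T(E)\subset H\ba G$.

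Next I would apply the counting theorem. Since $H$ is symmetric, the wave-front property used in \cite{Oh-Shah2} (cf. Theorem \ref{eq}) lets one thicken the orbit and reduce $\#([e]\Gamma\cap B_T(E))$ to integrals of the form $\int_H\psi([e]ha_t)\,dh$; inserting the expanding-translate equidistribution of Theorem \ref{op}, whose normalization $t^{d/2}e^{(D-\delta)t}$ is precisely the local-mixing factor of Theorem \ref{mix2}, yields, after checking that $\{B_T(E)\}$ is well-rounded in the sense of Definition \ref{MDW}, the asymptotic $\#([e]\Gamma\cap B_T(E))\sim\mathcal M(B_T(E))$ of Theorem \ref{mcc}. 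It then remains to evaluate $\mathcal M(B_T(E))$ from Definition \ref{MD}: the radial integral, which carries the local-limit normalization dictated by Theorem \ref{mix2} (recall $D=2$ for $\bH^3$), evaluates to a constant multiple of $T^\delta(\log T)^{-d/2}$, while the transverse integral $\int d\nu_o(k_2^{-1}v_o^-)$ over the directions meeting $E$ contributes $\op{H}^\delta(E)$. Here I use that $\Lambda(\Gamma)=\Lambda(\Gamma_0)$ and that, for the convex cocompact group $\Gamma_0$, the Patterson--Sullivan density $\nu_o$ is proportional to the $\delta$-dimensional Hausdorff measure on the limit set (Sullivan). Combining the two factors gives the claimed asymptotic, with a constant $c>0$ depending only on $\Gamma$ and the chosen norm.

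The main obstacle is uniformity over the radial parameter. Theorem \ref{mix2} is a pointwise-in-$t$ statement with the delicate slowly-varying factor $t^{d/2}$, whereas the count integrates the equidistribution over the whole range $0\le t\lesssim\log T$; one must show that the mass concentrates near $t\approx\log T$, so that the local-limit factor may be pulled out as $(\log T)^{-d/2}$ and the error terms of Theorem \ref{op} remain negligible after integration. Intertwined with this is the verification of well-roundedness of $B_T(E)$ against the log-twisted measure $\mathcal M$: because the family $\{B_T(E)\}$ is not exactly self-similar under the flow $a_t$, the boundary estimate of Definition \ref{MDW}(2) has to be established directly, and it is precisely the rectifiability of $\partial E$ --- together with $\Lambda(\Gamma)=\Lambda(\Gamma_0)$ and the conformality of $\nu_o$ --- that makes the family of circles meeting $\partial E$ negligible. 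Once these uniformity and boundary estimates are secured, the remainder of the argument transcribes the proof of \cite{Oh-Shah2} essentially verbatim.
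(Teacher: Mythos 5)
Your proposal is correct and follows essentially the same route as the paper, whose entire proof of Theorem \ref{sk2} is the remark that one transcribes the Oh--Shah circle-counting argument of \cite{Oh-Shah2} with the mixing input replaced by Theorem \ref{mix2} --- i.e., exactly your reduction to a discrete $\Gamma$-orbit in $H\ba G$ for the symmetric subgroup $H=\op{Stab}_G(\hat C_0)$, the wave front thickening behind Theorems \ref{op} and \ref{mcc}, well-roundedness of $B_T(E)$ via rectifiability of $\partial E$, and Sullivan's identification of $\nu_o$ with $\op{H}^\delta$ for the convex cocompact group. One remark: your radial evaluation $\mathcal M(B_T(E))\asymp T^{\delta}(\log T)^{-d/2}$ is the one actually dictated by Theorem \ref{op} (density $e^{\delta t}t^{-d/2}\,dt$), and although Definition \ref{MD} as printed carries $t^{d/2}$, consistency with Theorem \ref{sk2} and the norm-ball corollary in the introduction shows the printed exponent is a typo, so your computation matches the intended measure.
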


\subsection{ Joint equidistribution of closed geodesics and holonomies}
 Recall $X=\Gamma\ba \tilde X$. A primitive closed geodesic $C$ in $\T^1(X)$ is a compact set of the form
 $$\G\ba \G gAM/M=\G\ba \G g A(v_o)$$
 for some $g\in G$. The length of a closed geodesic  $C=\G\ba \G gAM/M$ is given as the co-volume of 
$AM\cap g^{-1}\G g $ in $AM$. 

 For each closed geodesic $C=\G\ba \G gAM/M$, we  write $\ell(C)$ for the length of $C$, 
 and denote by  $h_C$
 the unique $M$-conjugacy class associated to the holonomy class of $C$: $\G g m a_{\ell(C)} =\Gamma g m h_C$.
We also write
  $\mathcal{L}_C$ for the length measure on  $C$

Let $M^{\textsc{c}}$ denote the space of conjugacy classes of $M$.
For $T>0$, define $$\mathcal G_\G(T):=\{C: C\text{ is a closed geodesic in $\T^1(X)$},\;\; \ell(C)\le T\}.$$

For each $T>0$, we define
 the measure $\mu_T$ on the product space $(\G\ba G/M) \times M^{\textsc{c}}$:
  for $\psi \in C(\Gamma\ba G/M)$ and any class function $\xi\in C(M)$,
$$\mu_T(\psi \otimes \xi)=\sum_{C\in  \mathcal G_\G(T)} \mathcal{L}_C (\psi ) \xi(h_C).$$

We also define a measure  $\eta_T$  by
$$\eta_T(\psi \otimes \xi)=\sum_{C\in  \mathcal G_\G(T)} \mathcal{D}_C (\psi) \xi(h_C),$$
 where 
$ \mathcal{D}_C (\psi )=\ell(C)^{-1} \mathcal{L}_C (\psi )$.
 \medskip

 Given Theorem \ref{mix2},
 the following can be deduced in the same way as the proof of Theorem 5.1 in \cite{MMO}.
\begin{thm}\label{eqt} For any bounded $\psi \in C(\G\ba G/M)$ and a class function $\xi \in C(M)$,
we have, as $T\to \infty$, \begin{equation*}\label{mue2}
\mu_{T }( \psi \otimes \xi)
\sim  \frac{e^{\delta T} }{(2\pi\sigma)^{2/d} \delta T^{d/2}} \cdot  \mathsf{m}^{\BMS}(\psi)  \cdot \int_M \xi dm  \end{equation*}
and \begin{equation*}\label{mue3}
 \eta_{T }( \psi \otimes \xi)
\sim     \frac{ e^{\delta T}  }{(2\pi\sigma)^{2/d} \delta T^{d/2+1}  } 
 \cdot  \mathsf{m}^{\BMS}(\psi)  \cdot \int_M \xi dm.\end{equation*}\end{thm}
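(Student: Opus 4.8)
The plan is to follow the method of Margulis as carried out in the proof of \cite[Theorem 5.1]{MMO}, whose sole essential dynamical input is the precise asymptotic of the BMS-correlation function $\la a_t\psi_1,\psi_2\ra_{\mathsf{m}^{\BMS}}$; this is now furnished for the abelian cover by Theorem \ref{m1}. The first step is to reduce, via a partition of unity subordinate to the Markov section $\mathcal R$, to the case where $\psi$ is supported in a single small flow box and $\xi$ is a single matrix coefficient of an irreducible representation of $M$. Since $\xi$ is a class function, its value $\xi(h_C)$ on the holonomy conjugacy class of a closed geodesic $C$ is well defined, and we encode it by integrating the $M$-fibre of $\Gamma\ba G\to\Gamma\ba G/M$ against $\xi$. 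The output of this reduction is that it suffices to understand, for $\phi_1,\phi_2\in C_c(\Gamma\ba G)$ supported in a box of size $\e$, the $M$-twisted correlation $m\mapsto\la a_t(\phi_1\circ r_m),\phi_2\ra_{\mathsf{m}^{\BMS}}$, where $r_m$ denotes right translation by $m\in M$.

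The heart of the argument is the passage from correlation to orbit counting. Using the closing lemma (Lemma \ref{closing}), one establishes for all large $t$ a bijection, up to an $O(\e)$ error in length and in holonomy, between the returns $\{g:g,\ ga_tm\in\text{box}\}$ and the primitive closed geodesics $C$ of length in $[t-O(\e),t+O(\e)]$ that meet the box and whose holonomy class $h_C$ lies within $O(\e)$ of $m$. Summing over the returns and integrating in $t$ over $[0,T]$ and in $m\in M$ against $\xi$, one obtains, exactly as in \cite{MMO}, an approximate identity expressing $\mu_T(\psi\otimes\xi)$ as a constant multiple of
\[
\int_0^T e^{\delta t}\left(\int_M\xi(m)\,\la a_t(\phi_1\circ r_m),\phi_2\ra_{\mathsf{m}^{\BMS}}\,dm\right)dt,
\]
where the weight $e^{\delta t}$ is the Jacobian converting the BMS mass of the return set into the number of sheets, arising precisely as in the $d=0$ case. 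The equidistribution of the holonomy toward the Haar measure, giving the factor $\int_M\xi\,dm$, follows because the refined mixing is insensitive to the $M$-twist; equivalently it reflects the density of the $M$-component of the generalized length spectrum established in Corollary \ref{cowbell}.

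It remains to insert the asymptotic of Theorem \ref{m1}, namely $t^{d/2}\la a_t\phi_1,\phi_2\ra_{\mathsf{m}^{\BMS}}\to(2\pi\sigma)^{-d/2}\mathsf{m}^{\BMS}(\phi_1)\mathsf{m}^{\BMS}(\phi_2)$. Thus the inner integral behaves like $(2\pi\sigma)^{-d/2}t^{-d/2}\mathsf{m}^{\BMS}(\phi_1)\mathsf{m}^{\BMS}(\phi_2)\int_M\xi\,dm$, and integrating $e^{\delta t}t^{-d/2}$ over $[0,T]$ yields a leading term $\sim\delta^{-1}e^{\delta T}T^{-d/2}$ by the Laplace method. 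Summing over the partition replaces $\mathsf{m}^{\BMS}(\phi_1)\mathsf{m}^{\BMS}(\phi_2)$ by $\mathsf{m}^{\BMS}(\psi)$ and produces the stated asymptotic for $\mu_T$; the formula for $\eta_T$ follows from the substitution $\mathcal L_C=\ell(C)\mathcal D_C$, which inserts an extra factor $\ell(C)^{-1}\sim t^{-1}$ and hence lowers the power of $T$ by one.

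The main obstacle is uniformity: the limits $\e\to0$ and $t\to\infty$ must be interchanged with uniform control, so that the $O(\e)$ errors in the closing-lemma bijection do not overwhelm the genuine $t^{-d/2}$ main term. This is exactly where the local uniformity of the mixing asymptotic is needed, and it is supplied by the uniform-on-compacta convergence in Theorem \ref{pq}; one must also arrange that the boxes have BMS-negligible boundary so that the approximation of $\psi$ and $\xi$ by adapted test functions is harmless, and verify that non-primitive geodesics (iterates of shorter ones) contribute only to lower order, since their lengths are bounded below by twice the systole and their number grows with a strictly smaller exponential rate.
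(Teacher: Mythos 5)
Your proposal is correct and matches the paper's intended argument: the paper itself proves Theorem \ref{eqt} by remarking that, given the local mixing asymptotics (Theorems \ref{m1} and \ref{mix2}), the proof of Theorem 5.1 of \cite{MMO} goes through verbatim, and your sketch is a faithful reconstruction of that argument --- closing-lemma correspondence between flow returns and closed geodesics with holonomy control, insertion of the $t^{-d/2}(2\pi\sigma)^{-d/2}$ correlation asymptotic (with the nontrivial $M$-types suppressed, as in Theorem \ref{pq}(3)), Laplace integration of $e^{\delta t}t^{-d/2}$, and passage from $\mu_T$ to $\eta_T$ via $\mathcal L_C=\ell(C)\mathcal D_C$ with the mass concentrating at $\ell(C)$ near $T$. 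Your closing paragraph on the $\e$--$t$ uniformity, BMS-negligible boundaries, and the negligibility of non-primitive geodesics correctly identifies exactly the technical points that the citation to \cite{MMO} is meant to cover.
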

 
 Theorem \ref{mp1} is an easy consequence of this.

\end{document}